\def\specialsection{\@startsection{section}{1}%
  \z@{\linespacing\@plus\linespacing}{.5\linespacing}%
  {\normalfont}}
\def\section{\@startsection{section}{1}%
  \z@{.7\linespacing\@plus\linespacing}{.5\linespacing}%
  {\normalfont\scshape}}
\newtheorem{theorem}{Theorem}[section]
\theoremstyle{plain}
\newtheorem{acknowledgement}{Acknowledgement}
\newtheorem{definition}[theorem]{Definition}
\newtheorem{lemma}[theorem]{Lemma}
\newtheorem{proposition}[theorem]{Proposition}
\numberwithin{equation}{section}
\newcommand{\R}{\mathbb{R}}
\newcommand{\pa}{\partial}
\newcommand{\ld}{\lambda}
\newcommand{\J}{\mathbb{J}}
\begin{document}
\title{Sub-Riemannian curvature of Carnot groups with rank-two distributions}

\author{Isidro H. Munive}
\email{imunive@cimat.mx}

\maketitle

\begin{abstract}
The notion of curvature discussed in this paper is a far going generalization of the Riemannian sectional curvature. It was first introduced by Agrachev, Barilari and Rizzi in \cite{ABR1}, and it is defined for a wide class of optimal control problems: a unified framework including geometric structures such as Riemannian, sub-Riemannian, Finsler, and sub-Finsler structures. In this work we study the \emph{generalized sectional curvature} of  Carnot groups with  rank-two distributions.   In particular, we consider the Cartan group and Carnot groups with horizontal distribution of Goursat-type. In these Carnot groups we characterize ample and equiregular geodesics.  For Carnot groups with horizontal Goursat distribution we show that their \emph{generalized sectional curvatures}     depend only on the Engel part of the distribution. This family of Carnot groups contains naturally the three-dimensional  Heisenberg group, as well as the Engel group.  Moreover, we also show that in the Engel and Cartan  groups   there exist initial covectors for which there is an infinite discrete set of  times at which the corresponding ample geodesics are not equiregular.  
\end{abstract}

\begin{acknowledgement}
The author wishes to thank Andrei Agrachev, Davide Barilari and Luca Rizzi for their constant interest and the many helpful conversations on sub-Riemannian geometry, especially on their paper \cite{ABR1}. The author also would  like to  thank the anonymous reviewers for their constructive comments, which helped to improve the contents of the paper, especially the content of Theorem \ref{NonVanh}. This work was done while the author was visiting the International School for Advanced Studies (SISSA) at Trieste, Italy.    
\end{acknowledgement}

\section{Introduction}

In sub-Riemannian geometry there is no canonical connection such as the miraculous Levi-Civita connection in Riemannian geometry. Despite this obstacle, several groups of mathematicians in recent years have been trying to define a notion of  curvature in the sub-Riemannian setting. The curvature we consider in this paper was  introduced in \cite{ABR1}, and it is a generalization of the \emph{sectional curvature} from Riemannian geometry.

 Let $M$ be an $n-$dimensional Riemannian manifold with Riemannian distance $d$ and let $\gamma_v(t),\gamma_w(s)$ be two arc-length parametrized geodesics with   
\[
\gamma_v(0)=\gamma_w(0)=x_0\in M \quad \text{and}\quad \dot{\gamma}_v(0)=v,\dot{\gamma}_w(0)=w\in T_{x_0}M.  
\]
The geodesic cost associated with $\gamma_v$ is
\[
c_t(x)\doteq-\frac{1}{2t}d^2(x,\gamma_v(t)).
\]
Consider the function 
\[
C(t,s)\doteq -tc_t(\gamma_{w}(s)).
\]

The  \emph{sectional curvature} of a Riemannian manifold can be recover from the asymptotic expansion of the function $C$. It can be shown that  $C$ is smooth at $(0,0)$. The following formula, which is due to Loeper, see \cite{Loeper}, holds true for the Taylor expansion of $C(t,s)$ at $(0,0)$
\begin{eqnarray*}
C(t,s)&=&\frac{1}{2}\left(t^2+s^2-2g(v,w) ts\right)-\frac{1}{6}g\left(R(v,w)v,w\right) t^2s^2\\
&&+t^2s^2o\left(|t|+|s|\right),
\end{eqnarray*}
where $g$ denotes the Riemannian inner product and $R$ is the Riemannian curvature tensor. From the expansion of $C(t,s)$, we easily get $d_{x_0}c_t(\cdot)=g(v,\cdot)$. Now, if we let $\dot{c}_t=\frac{\partial}{\partial t}c_t$, then $d_{x_0}\dot{c}_t=0.$  Hence, the \emph{Hessian} $d^2_{x_0}\dot{c}_t$ is a well-defined quadratic form on $T_{x_0}M$. From the Taylor expansion of $C(t,s)$, we finally get
\begin{equation}
\label{RiemannianEx}
d^2_{x_0}\dot{c}_t(w)=\frac{1}{t^2}g(w,w)+\frac{1}{3}g\left( R(v,w)v,w\right)+O(t).
\end{equation}

The derivative $\dot{c}_t$ of the geodesic cost has a very nice geometric interpretation. Let $W^t_{x,\gamma(t)}\in T_{\gamma(t)}M$ be the final tangent vector of the unique
minimizer connecting $x$ with $\gamma(t)$ in time $t$. From  \cite[Appendix H]{ABR1} we have:
\begin{equation}
\label{dotc}
\dot{c}_t(x) = \frac{1}{2}\|W^t_{x_0,\gamma(t)}-W^t_{x,\gamma(t)}\|^2-\frac{1}{2}\|W^t_{x_0,\gamma(t)}\|^2.
\end{equation}

The ``curvature'' at the initial point $x_0$ is hidden in the behavior of this function for small $t$ and $x$ close to $x_0$. For instance, in a positively (resp. negatively) curved Riemannian manifold, for which Eq. (\ref{dotc}) still holds, then the two tangent vectors, compared at $\gamma(t)$, are more (resp. less) divergent w.r.t. $t$. 

 Despite the  fact that the sub-Riemannian distance has a more complicated behavior than the Riemannian one,  Agrachev, Barilari and Rizzi  wrote in \cite{ABR1} an expansion  for the square of the  sub-Riemannian distance that resembles Eq. (\ref{RiemannianEx}) in order to have a notion of  \emph{curvature} in the sub-Riemannian setting. Indeed, the authors defined \emph{curvature-type invariants} for a wide class of optimal control problems. One must mention here that formula (\ref{dotc}) is also valid in the sub-Riemannian setting, see \cite{ABR1}.

We will now describe some results of the paper \cite{ABR1}. Let $(\mathscr{D},g)$ be a sub-Riemannian structure on a smooth manifold $M$, $\gamma$ be a fixed normal geodesic, with $\gamma(0)=x_0$ and inital covector $\lambda_0$. Let 
\[
c_t(x)\doteq-\frac{1}{2t}d^2(x,\gamma(t)), \quad t>0, 
\]
be the geodesic cost function. If $\gamma$ is also \emph{strongly normal}, see Definition \ref{Strongly}, then
the function $c_t(x)$ is smooth in $t$ and $x$ for small $t>0$ and $x$ close to $x_0$. Moreover, $d_{x_0}c_t=\lambda_0$, and hence, $x_0$ is a critical point for $\dot{c}_t=\pa_tc_t$.

 The \emph{second differential}  of $\dot{c}_t$ is well defined at $x_0$, and we can  associate with the  family of quadratic forms $d^2_{x_0}\dot{c}_t|_{\mathscr{D}_{x_0}}:\mathscr{D}_{x_0}\rightarrow \R$, a family of symmetric operators $\mathcal{Q}_{\lambda_0}(t)$, via the formula
\begin{equation}
d^2_{x_0}\dot{c}_t(v)=g(\mathcal{Q}_{\lambda}(t)v,v),\quad \text{$t>0$ and $v\in\mathscr{D}_{x_0}$}.
\end{equation}

If we also assume that $\gamma$ is \emph{ample}, see Definition \ref{ampledef}, then we have the following Laurent expansion for the family of symmetric operators $\mathcal{Q}_{\lambda_0}(t):\mathscr{D}_{x_0}\rightarrow\mathscr{D}_{x_0}$,
\begin{equation*}
\mathcal{Q}_{\lambda_0}(t)=\frac{1}{t^2}\mathcal{I}_{\lambda_0}+\frac{1}{3}\mathcal{R}_{\lambda_0}+O(t) \quad t>0.
\end{equation*} 

Moreover, in \cite{ABR1} the authors showed that, if  the geodesic $\gamma$ is also \emph{equiregular}, see Definition \ref{ampledef}, the explicit expression of the invariants $\mathcal{I}_{\lambda_0}$ and $\mathcal{R}_{\lambda_0}$  can be computed in terms of the symplectic invariants of the so-called \emph{Jacobi curve}. The invariant $\mathcal{R}_{\lambda_0}$ is called  \emph{the curvature operator}. The Jacobi curve arises naturally from the geometric interpretation of the second derivative of the geodesic cost. These symplectic invariants can be computed via an algorithm which is quite hard to implement. Despite the enormous difficulty of this algorithm  we are able to provide in this paper  the explicit expression of the invariants $\mathcal{I}_{\lambda_0}$ and $\mathcal{R}_{\lambda_0}$ in Carnot groups with horizontal distribution of \emph{Goursat-type}. 

We consider in $\R^n$, $n\geq3$, a system of vector fields $\{X_1,\ldots,X_n\}$ such that
\begin{equation}
\label{GoursatIntro}
\left[X_1,X_2\right]=X_{3}, \quad \left[X_1,X_3\right]=X_{4},\quad \ldots, \quad\left[X_1,X_{n-1}\right]=X_{n},
\end{equation} 
and the other brackets assume to be trivial. Let the $\mathfrak{g}$ be the nilpotent stratified Lie algebra $\mathfrak{g}=\mathfrak{g}_1\oplus\ldots\oplus\mathfrak{g}_{n-1}$, with $\mathfrak{g}_1=\mathrm{span}\ \{X_1,X_2\}$,  $\mathfrak{g}_i=\mathrm{span}\ \{X_{i+1}\}$ for $i=2,\ldots,n-1$. There exists a unique simply connected Lie group $\mathbb{J}^n$ such that $\mathfrak{g}$ is its Lie algebra of left-invariant vector fields. The \emph{Heisenberg group} ($n=3$) and the \emph{Engel group} ($n=4$) are very important examples of this family. The group $\mathbb{J}^n$ is a Carnot group with horizontal distribution of  Goursat-type. 

The name Goursat distribution is related to the work \cite{Goursat}, in which Goursat popularized these distributions. Goursat's predecessors were Engel and Cartan. These distributions are quite important since they are examples of sub-Riemannian manifolds of step higher than two. Goursat distributions of dimension $n$ can be obtained by the \emph{$(n-2)$-fold prolongation} of a two-dimensional surface, see \cite{Mon}. 

A particular physical problem described by the distribution (\ref{GoursatIntro})  is the motion of electric charges in certain static inhomogeneous magnetic fields, see for instance \cite{AnzaldoMonroy}. A different physical problem which is also related to Goursat distributions is the so-called $N-trailer$ problem, which consists of steering a robot with a number $N$ of trailers, see \cite{FLMR}, \cite{Jak}, \cite{JeanF}, \cite{EssaysRobotics} \cite{Laumond}, \cite{Laumond2}, \cite{Li-Canny}, \cite{FLMR1}, \cite{Samson}, \cite{Sordalen}, \cite{TMW} and \cite{TMS}, among many others.

Let us now describe the main results of this paper. For $i=1,\ldots,n$, let $h_i$ be  the linear on fiber function given  by $h_i(\lambda)\doteq\left\langle \ld,X_i\right\rangle$,  with $\lambda\in T^*\R^n$.
\begin{theorem}
\label{gasym}
Let $\gamma:[0,T]\rightarrow\R^n$ be an ample and equiregular geodesic in $\mathbb{J}^n$, $n\geq 3$, with $\gamma(0)=x_0$ and initial covector $\lambda(0)=\lambda_0\in T^*_{x_0}\R^n$. Then,  we have the following explicit expansion of $d^2_{x_0}\dot{c}_t$  as $t\rightarrow 0$, 
\begin{eqnarray}
\label{Expansionct}
d^2_{x_0}\dot{c}_t|_{\mathscr{D}}&=&\frac{1}{t^2}\mathcal{I}_{\lambda_0}+\frac{1}{3}\mathcal{R}_{\lambda_0}+O(t)\\
&=&
\frac{1}{t^2}
\begin{pmatrix}
(n-1)^2 & 0\\
0& 1
\end{pmatrix}
+\frac{1}{3}
\begin{pmatrix}
\frac{3(n-1)}{4(n-1)^2-1}R_{11}(\lambda_0)& 0\\
0 & 0
\end{pmatrix}
+O\left(t\right),\nonumber
\end{eqnarray}
in a suitable orthonormal basis of $\mathscr{D}_{x_0}$,   where
\begin{eqnarray}
\nonumber
R_{11}(\lambda_0)&=&-\frac{1}{6}(n-1)\left(12+n\left(4n-17\right)\right)\left(h^2_3+h_2h_4\right)\\
&&-(n-1)(n-2)(n-3)h^2_3\frac{h^2_2}{h^2_1}.\label{CurRI}
\end{eqnarray}
Here we use the convention $h_4\equiv0$ when $n=3$.
\end{theorem}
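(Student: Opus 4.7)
The plan is to follow the algorithmic procedure of \cite{ABR1} that extracts $\mathcal{I}_{\lambda_0}$ and $\mathcal{R}_{\lambda_0}$ from the symplectic invariants of the Jacobi curve attached to $\gamma$. The key simplification in $\mathbb{J}^n$ is that the Goursat condition (\ref{GoursatIntro}) makes every bracket $[X_i,X_j]$ with $i,j\ge 2$ vanish, so the Hamiltonian flow of $H=\tfrac12(h_1^2+h_2^2)$ reduces, via the Poisson bracket identities, to a triangular cascade: $\dot h_n=0$, $\dot h_i=\pm h_1 h_{i+1}$ for $3\le i\le n-1$, and $\dot h_1=\mp h_2 h_3$, $\dot h_2=\pm h_1 h_3$. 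These ODE's, together with conservation of $H$, are the only dynamical input needed; in particular they are what will eventually produce the rational combination $h_3^2+h_2h_4$ and the quotient $h_2^2/h_1^2$ appearing in $R_{11}$.

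Next I would determine the Young diagram of $\gamma$. Using the ample/equiregular criterion established earlier in the paper, together with the fact that $\mathrm{ad}^{\,k}_{X_1}X_2=X_{k+2}$ generates the full flag, one sees that the diagram has two rows of lengths $n-1$ and $1$. This already fixes the principal term $\mathcal{I}_{\lambda_0}=\mathrm{diag}((n-1)^2,1)$ in an orthonormal basis adapted to the splitting $\mathscr{D}_{x_0}=\mathrm{span}\{X_1\}\oplus\mathrm{span}\{X_2\}$, matching the leading coefficient in (\ref{Expansionct}). The vanishing of the off-diagonal and $(2,2)$ entries of $\mathcal{R}_{\lambda_0}$ likewise follows from the trivial Young block attached to $X_2$: since $X_2$ commutes with $X_3,\ldots,X_n$, the transverse direction in $\mathscr{D}_{x_0}$ carries no further symplectic structure, and only the single entry $R_{11}(\lambda_0)$ survives.

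The core of the proof is then the construction of the canonical moving frame along the lifted curve $\lambda(t)$ and the extraction of $R_{11}(\lambda_0)$. I would build the frame by iteratively normalizing the vectors coming from the $X_1$-chain, which requires $n-1$ successive Gram--Schmidt-type steps, each introducing a dimension-dependent rescaling; the overall factor $3(n-1)/(4(n-1)^2-1)$ will arise from these rescalings combined with the universal $1/3$ in the expansion. The polynomial piece $-\tfrac16(n-1)(12+n(4n-17))(h_3^2+h_2h_4)$ collects the ``local'' structural constants encoded in $[X_1,X_2]=X_3$ and $[X_1,X_3]=X_4$, while the nonlocal term $-(n-1)(n-2)(n-3)h_3^2 h_2^2/h_1^2$ appears only when the chain has length at least four and records the back-reaction of the tail $X_4,\ldots,X_n$ on the normalization of the first block, with the denominator $h_1$ coming from the pivots used at each stage (this also explains the convention $h_4\equiv 0$ when $n=3$).

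The genuine obstacle I expect is exactly this bookkeeping: every normalization step rewrites the preceding frame vectors, and the recursion must be controlled uniformly in $n$. Working in the single chain $X_1,X_3,\ldots,X_n$ keeps only powers of $h_1$ and of the $h_i$'s in play, which is what makes the closed form (\ref{CurRI}) tractable. As a consistency check I would verify the Heisenberg case $n=3$ by hand (where the tail contribution disappears and the coefficient collapses to the known value), and then induct on $n$ to propagate the two combinatorial factors $(n-1)(12+n(4n-17))/6$ and $(n-1)(n-2)(n-3)$ through the recursion.
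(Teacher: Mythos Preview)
Your overall strategy---build the canonical frame of the Jacobi curve and read off $R_{aa,11}$---is the paper's strategy, but two misidentifications would derail the execution. First, the ``suitable orthonormal basis'' is \emph{not} $\{X_1,X_2\}$. The canonical frame projects to $X_a=\pi_*F_{a1}(0)=\pm X_\theta$ and $X_b=\pi_*F_{b1}(0)=X_{\bar\theta}$, where $X_\theta=h_2X_1-h_1X_2$ and $X_{\bar\theta}=h_1X_1+h_2X_2=\dot\gamma$. Thus the short row of the Young diagram is the \emph{tangent} direction to the geodesic, not $X_2$, and the vanishing of all $R_{\ast b,\ast 1}$ has nothing to do with ``$X_2$ commutes with $X_3,\ldots,X_n$''. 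It follows instead from the identity $E_{b1}(t)=e^{-t\vec H}_*\mathfrak{e}=\mathfrak{e}-t\vec H$ for the Euler vector field, a consequence of the homogeneity of $H$ valid in any Carnot group (cf.\ \cite[Lemma~5.7]{BR2}). Second, the factor $\tfrac{3(n-1)}{4(n-1)^2-1}$ does not emerge from your normalization steps: it is the universal coefficient $3\,\Omega(n_a,n_a)=\tfrac{3n_a}{4n_a^2-1}$ with $n_a=n-1$ supplied by Theorem~\ref{Sbeta}, independent of the model. Only $R_{aa,11}$ is specific to $\mathbb{J}^n$.

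The paper's computation of $R_{aa,11}$ is also more concrete than your sketch suggests. One first pins down $E_{an_a}(t)=e^{-t\vec H}_*h_1^{2-n_a}\partial_{h_n}$ from the defining conditions (Lemma~\ref{ForE}), then differentiates $n_a$ times using the bracket identities $[\vec H,\partial_{h_i}]=-h_1\partial_{h_{i-1}}$, $[\vec H,\partial_{h_3}]=-\partial_\theta$, $[\vec H,\partial_\theta]=X_\theta+\cdots$ (Lemma~\ref{IdHh}) together with a closed recursion for the coefficients (Proposition~\ref{aij}). This yields $F_{a1}(0)$ and $\dot F_{a1}(0)$ explicitly modulo irrelevant vertical terms, and finally $R_{aa,11}=\sigma(\dot F_{a1}(0),F_{a1}(0))$. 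There is no induction on $n$; the polynomials in $n$ appearing in (\ref{CurRI}) come from evaluating two finite double sums $A_1(n),A_2(n)$ in closed form. Your heuristic about ``back-reaction of the tail'' producing the $h_2^2/h_1^2$ term is morally right---the $h_1^{-1}$ factors enter through the normalization $h_1^{2-n_a}$ of $E_{an_a}$---but the mechanism is the iterated action of $\vec H$ on $h_1^{-k}$, not a Gram--Schmidt process on the $X_i$.
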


In the Heisenberg group all geodesics are  ample and equiregular. The invariant $\mathcal{R}_{\lambda_0}$ in this group, computed for the first time  in the works \cite{ABR1} and \cite{BR1}, is given by
\[
\mathcal{R}_{\lambda_0}\doteq \frac{2}{5}\begin{pmatrix}
h^2_3(\lambda_0)& 0\\
0 & 0
\end{pmatrix}.
\]
For $\mathbb{J}^n$, $n\geq 4$, we obtain from Theorem \ref{gasym} that $\mathcal{R}_{\lambda_0}$  depends only on the \emph{Engel} part of the distribution and on the dimension $n$. Furthermore, notice that contrary to the Heisenberg group case, when $n\geq 4$, the quantity $R_{11}(\lambda_0)$ changes sign and has a singularity when $h_1(\lambda_0)=0$. In Theorem \ref{NonVanh} we show, for instance, that an ample geodesic $\gamma=\pi(\lambda)$ is equiregular at $t$ if and only if $h_1(\lambda(t))\neq 0$. Equiregular geodesics can be thought of as  the microlocal counterpart of  equiregular distributions.

The operator $\mathcal{I}_{\lambda_0}$  carries important geometric information. More precisely, the trace of $\mathcal{I}_{\lambda_0}$ is equal to the \emph{geodesic dimension} of $\mathbb{J}^n$.   The geodesic dimension in $\mathbb{J}^n$, for instance, is defined as follows. Let $\Omega\subset\R^n$ be a bounded and measurable subset of positive volume and let $\Omega_{x_0,t}$, for $0 \leq t \leq1$, be a family of subsets obtained from $\Omega$ by the homothety of $\Omega$ with respect to a fixed point $x_0$ along the shortest geodesics connecting $x_0$ with the points of $\Omega$, so that $\Omega_{x_0,0} = \{x_0\}$, $\Omega_{x_0,1} = \Omega$. The volume of $\Omega_{x_0,t}$ has order $t^{\mathcal{N}_{x0}}$ , where $\mathcal{N}_{x_0}$ is the geodesic dimension at $x_0$ (see \cite[Section 5.6]{ABR1} and also \cite{Ri}  for a more intrinsic and general definition of the geodesic dimension, which is also valid for general metric measure spaces).

The next natural candidate to study would be the \emph{Cartan group} $\mathfrak{C}$, with underlying manifold $\R^5$. The Lie group $\mathfrak{C}$ is the free nilpotent group with five-dimensional nilpotent Lie algebra $\mathfrak{L}=\mathrm{span}\ \{X_1,\ldots, X_5\}$, whose generators satisfy
\[
[X_1,X_2]=X_3, \quad [X_1,X_3]=X_4, \quad [X_2,X_3]=X_5
\]
Notice that the distribution of $\mathfrak{C}$ is not of Goursat-type.

 For the Cartan group we show that the \emph{generalized sectional curvature} is bounded from above by the \emph{energy integral of the pendulum}, which is a first integral of the normal Hamiltonian system. For $i=1,\ldots,5$, let $h_i$ be  the linear on fiber function given  by $h_i(\lambda)\doteq\left\langle \ld,X_i\right\rangle$,  with $\lambda\in T^*\R^5$. Then, the \emph{energy integral of the pendulum} is given by
\[
E=\frac{h^2_3}{2}+h_1h_5-h_2h_4.
\]

\begin{theorem}
\label{casym}
 Let $\gamma:[0,T]\rightarrow\R^5$ be an ample and equiregular geodesic, with $\gamma(0)=x_0$ and initial covector $\lambda(0)=\lambda_0\in T^*_{x_0}\mathbb{R}^5$. We have the following explicit expansion of $d^2_{x_0}\dot{c}_t$  as $t\rightarrow 0$,
\begin{eqnarray*}
d^2_{x_0}\dot{c}_t|_{\mathscr{D}}&=&\frac{1}{t^2}\mathcal{I}_{\lambda_0}+\frac{1}{3}\mathcal{R}_{\lambda_0}+O(t)\\
&=&
\frac{1}{t^2}
\begin{pmatrix}
16 & 0\\
0& 1
\end{pmatrix}
+\frac{1}{3}
\begin{pmatrix}
\frac{4}{21}R_{11}(\lambda_0)& 0\\
0 & 0
\end{pmatrix}
+O\left(t\right),\nonumber
\end{eqnarray*}
in a suitable orthonormal basis of $\mathscr{D}_{x_0}$,  where
\begin{eqnarray}
\label{CurRIE}
R_{11}(\lambda_0)&=&6E-\frac{8}{h^2_3}\left(h_1h_4+h_2h_5\right)^2.
\end{eqnarray}
\end{theorem}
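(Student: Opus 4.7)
The plan is to apply the canonical Darboux-frame algorithm of \cite{ABR1}, following the strategy of the proof of Theorem \ref{gasym} but adapted to the Cartan distribution, which is not of Goursat type: in $\mathfrak{C}$ both brackets $[X_1,X_3]=X_4$ and $[X_2,X_3]=X_5$ are non-trivial, so the Jacobi curve mixes all five directions and the one-dimensional chain-reduction exploited for $\mathbb{J}^n$ is unavailable.

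First I would write down the normal Hamiltonian $H=\tfrac12(h_1^2+h_2^2)$ and, using $\{h_i,h_j\}=h_{[X_i,X_j]}$ together with the step-$3$ freeness of $\mathfrak{L}$, derive the Hamilton equations
$$\dot h_1=-h_2h_3,\quad \dot h_2=h_1h_3,\quad \dot h_3=h_1h_4+h_2h_5,\quad \dot h_4=\dot h_5=0.$$
From these it is immediate that $h_4,h_5$ are conserved and that $\tfrac{d}{dt}(h_1h_5-h_2h_4)=-h_3\dot h_3$, so $E=h_3^2/2+h_1h_5-h_2h_4$ is a first integral; moreover $\ddot h_3=h_3(E-h_3^2/2)$, confirming the pendulum picture underlying the normal flow on $\mathfrak{C}$.

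Next, under the ampleness and equiregularity hypothesis, I would analyse the geodesic flag along $\gamma$ and show that its rank increments are $(2,1,1,1)$, so the Young diagram has rows of lengths $n_1=4,\,n_2=1$. The general formulas of \cite{ABR1} then yield $\mathcal{I}_{\lambda_0}=\mathrm{diag}(16,1)$ (corresponding to $n_1^2=16,\,n_2^2=1$) and force $\mathcal{R}_{\lambda_0}$ to be block-diagonal with vanishing $(2,2)$ entry; the universal coefficient $\tfrac{4}{21}=3n_1/(4n_1^2-1)$ multiplying $R_{11}$ depends only on the length of the long row and in particular coincides with the $n=5$ coefficient in Theorem \ref{gasym}.

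The delicate step is the explicit computation of $R_{11}(\lambda_0)$. I would build the canonical frame $\{E_a,F_a\}_{a=1,2}$ along $\lambda(t)$ starting from a rotation of $\{X_1,X_2\}$ that diagonalises $\mathcal{I}$, then iteratively apply $\vec H$ and orthogonalise with respect to the symplectic form, producing at each stage new pairs until the long row is exhausted. The main obstacle is the bookkeeping at the fourth iteration along the long row, where both the conserved combination $h_1h_5-h_2h_4$ (the pendulum potential term) and the non-conserved combination $h_1h_4+h_2h_5=\dot h_3$ enter simultaneously; repackaging the former through the first integral $E$ and the latter through $(h_1h_4+h_2h_5)^2/h_3^2$ is what yields the compact expression \eqref{CurRIE}.
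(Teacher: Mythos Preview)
Your proposal is correct and follows essentially the same route as the paper: establish the Hamilton equations and the first integral $E$, identify the Young diagram with rows $(4,1)$ so that Theorem~\ref{Sbeta} gives $\mathcal{I}_{\lambda_0}=\mathrm{diag}(16,1)$ and the universal coefficient $\Omega(4,4)=4/63$ (whence $3\Omega=4/21$), invoke the Euler-field identity for $E_{b1}$ to kill the $b$-row curvatures, and compute $R_{aa,11}=\sigma(\dot F_{a1},F_{a1})$ via the canonical frame.

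One procedural remark: the paper does not build the canonical frame ``starting from a rotation of $\{X_1,X_2\}$'' and iterating $\vec H$ forward, as you describe. Following the Zelenko--Li algorithm it instead begins at the \emph{deepest vertical} element, determining $E_{a4}(t)=e^{-t\vec H}_*\bigl(\tfrac{h_2}{h_3}\partial_{h_4}-\tfrac{h_1}{h_3}\partial_{h_5}\bigr)$ from the conditions $E_{a4}^{(i)}\in J_{\lambda_0}(t)$ for $i\le 3$ and the normalisation $\sigma(E_{a4}^{(4)},E_{a4}^{(3)})=1$, and then differentiates backward through the structural equations to reach $F_{a1}$ and $\dot F_{a1}$. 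Your forward description is not wrong in spirit, but the backward route is what makes the bookkeeping tractable: the $1/h_3$ factor enters right at the start (explaining the equiregularity condition $h_3\neq0$), and successive derivatives of $h_2/h_3,\,h_1/h_3$ generate exactly the terms $h_1h_5-h_2h_4$ and $(h_1h_4+h_2h_5)/h_3$ that you correctly anticipate repackaging via $E$.
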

In Proposition \ref{NonVanh3} we  show that for an ample geodesic $\gamma=\pi\circ\lambda$ the times $t$ where $h_3(\lambda(t))=0$ are times of loss of equiregularity.

The function $R_{11}(\lambda)$ in Eq.'s (\ref{CurRI}) and (\ref{CurRIE}) is a symplectic invariant of the so-called \emph{Jacobi curve}. Several authors have used the symplectic invariants associated with a \emph{Jacobi curve}  to get important new results in sub-Riemannian geometry. This set of invariants was first introduced by Agrachev and Gamkrelidze in \cite{AG}, Agrachev and Zelenko in \cite{AZ} and successively extended by Zelenko and Li in \cite{ZL}. In the works \cite{AL} and \cite{AL1}, Agrachev and Lee made extensive use of these invariants to establish for three-dimensional \emph{Sasakian} manifolds a generalized measure-contraction property, from which it follows:  a volume doubling property, a local Poincar\'e inequality, a Harnack inequality for positve harmonic functions  and a Liouville property. They were also able to prove  Bishop and Laplacian comparison theorems on such three-dimensional manifolds. One has to mention here that the \emph{three-dimensional Heisenberg group} is a very important example of a Sasakian manifold. Furthermore, in a recent paper \cite{BR1}, Barilari and Rizzi employed the invariants of the Jacobi curve to establish comparison theorems for conjugate points in sub-Riemannian manifolds.

In the works mentioned above the symplectic invariants have been computed explicitly only for sub-Riemannian manifolds of \emph{step two}. In the present paper we provide for the first time  explicit expressions of some of these invariants for   sub-Riemannian manifolds of step higher than two.  The importance of these expressions is that they might allow us to test whether or not results which are true in \emph{step two} generalize to \emph{step three} or higher.

Before describing the organization of the paper let us mention that in \cite{BG1} Baudoin and Garofalo introduced a notion of \emph{curvature}, quite different from the one that appears here,  for sub-Riemannian manifolds with \emph{transverse symmetries}. More precisely, they found a  generalization of the \emph{curvature-dimension inequality} from Riemannian geometry which is appropriate for a rich class of sub-Riemannian manifolds.  In \cite{BWa} Baudoin and Wang extended this  notion of curvature  to non-symmetric contact manifolds. In another recent paper, \cite{BKW}, Baudoin, Kim and Wang  established a curvature-dimension inequality on Riemannian foliations with totally geodesic leaves. The interested reader might also consult the following list of references, among many others, for applications of the generalized curvature-dimension inequality: \cite{FBo},\cite{BBG}, \cite{BBGM}, \cite{BG2},  \cite{FK2} and \cite{FK1}.

The organization of the paper is as follows. In sections \ref{Sec:CurvatureOperator} and \ref{Sec:Jacobi} we recall several deep results of the manuscript \cite{ABR1}. For instance, in these sections we recall the construction of the \emph{curvature operator} and the concept of \emph{Jacobi curves}. In Section \ref{Sec:Goursat} we compute explicitly the symplectic invariant $R_{aa,11}$ of the Jacobi curve in  Carnot groups with horizontal Goursat distributions, and therefore, by a crucial result in \cite{ABR1} we  obtain the explicit expression of the invariants $\mathcal{I}_{\lambda}$ and $\mathcal{R}_{\lambda}$. In Section \ref{Engel} we analyze ample and equiregular geodesics in the Engel group. In Section \ref{Sec:Cartan}, we investigate the quantities $\mathcal{I}_{\lambda}$ and $\mathcal{R}_{\lambda}$ in the Cartan group. 

\section{The Curvature Operator}
\label{Sec:CurvatureOperator}

In this section we recall the concept  of  \emph{curvature operator} in sub-Riemannian manifolds. This operator, as we have already mentioned,  was recently introduced by Agrachev, Barilari and Rizzi in the paper \cite{ABR1}. 

\begin{definition}
Let $M$ be a connected, smooth $n-$dimensional manifold. A sub-Riemannian structure on $M$ is a pair $(\mathbb{U},f)$ where:
\begin{enumerate}
\item $\mathbb{U}$ is a smooth rank $k$ Euclidean vector bundle with base $M$ and fiber $\mathbb{U}_x$, i.e. for every $x \in M$, $\mathbb{U}_x$ is a $k-$dimensional vector space endowed with an inner product.
\item $f : \mathbb{U} \rightarrow TM$ is a smooth linear morphism of vector bundles, i.e. $f$ is linear on fibers and the following diagram is commutative:
\[
\xymatrix{
       \mathbb{U}    \ar[dr]_{\pi_{ \mathbb{U}}} \ar[r]^f & TM \ar[d]^{\pi} \\
                               & M }
\]
\end{enumerate}
The maps $\pi_{\mathbb{U}}$ and $\pi$ are the canonical projections of the vector bundles $\mathbb{U}$ and $TM$, respectively. 
\end{definition}

\begin{definition}
\label{dist}
 The distribution $\mathscr{D} \subset TM$ is the family of subspaces
 \[
\mathscr{D}= \{ \mathscr{D}_x \}_{x \in M}, \quad \text{where} \quad \mathscr{D}_x \doteq f(\mathbb{U}_x ) \subset T_xM .
 \]
The family of \emph{horizontal vector fields} $\overline{\mathscr{D}} \subset \mathrm{Vec} (M)$ is
 \[
 \overline{\mathscr{D}}=\mathrm{span} \{f\circ\sigma,\ \sigma:M\rightarrow\mathbb{U}\ \text{is a smooth section of $\mathbb{U}$}\}.
 \]
 \end{definition}

The Euclidean structure on the fibers induces a metric structure on the distribution $\mathscr{D}_x = f(\mathbb{U}_x)$ for all $x\in M$ as  follows:  
\begin{equation}
\label{metric}  
 \|v\|^2_x\doteq \min \Big\{\|u\|^2 \ \big|\ v=f(x,u)\Big\},\quad  \forall v\in\mathscr{D}_x.
\end{equation}    
It is possible to show that $\|\cdot\|$ is a norm on $\mathscr{D}_x$ that satisfies the parallelogram law, i.e., it is actually induced by an inner product $g_x$ on $\mathscr{D}_x$. Notice that the minimum in (\ref{metric}) is always attained since we are minimizing an Euclidean norm in $\R^k$ on an affine subspace.

A Lipschitz continuous curve $\gamma:\left[0,T\right]\rightarrow M$ is horizontal (or \emph{admissible}) if $\dot{\gamma}(t)\in \mathscr{D}_{\gamma(t)}$ for a.e. $t\in \left[0,T\right]$. Given a horizontal curve $\gamma:\left[0,T\right]\rightarrow M$, its \emph{length} is given by
\[
\ell(\gamma)=\int^T_0\left\|\dot{\gamma}(t)\right\|dt.
\]  
Since the length is invariant by reparametrization, we can always assume that $\left\|\dot{\gamma}(t)\right\|$ is constant. The sub-Riemannian \emph{distance} between $x,y\in M$ is the function 
\[
d(x,y)=\left\{\ell(\gamma)\ |\ \gamma(0)=x,\ \gamma(T)=y, \text{ $\gamma$ horizontal}\right\}.
\]
 In this paper we assume that $\mathrm{Lie}_x\overline{\mathscr{D}} = T_xM$, for every $x \in M$. This is the classical bracket-generating (or H\"ormander) condition on the distribution $\mathscr{D}$, which implies the controllability of the system, i.e. $d(x,y) < \infty$ for all $x,y \in M$ (Rashevsky-Chow theorem). Moreover one can show that $d$ induces on $M$ the original manifold's topology. When $(M, d)$ is complete as a metric space, Filippov Theorem guarantees the existence of minimizers joining $x$ to $y$, for all $x, y \in M$.  

Locally, the pair $(\mathscr{D},g)$ can be given by assigning a set of $k$ smooth vector fields $\{X_1,\ldots,X_k\}$ that span $\mathscr{D}$, orthonormal for $g$. In this case, the set $\{X_1,\ldots,X_k\}$ is called a local orthonormal frame for the sub-Riemannian structure. Finally, we can write the system in control form, namely for any horizontal curve $\gamma : [0,T] \rightarrow M$ there is a control $u\in L^{\infty}\left(\left[0,T\right],\R^k\right)$ such that
\[
\dot{\gamma}(t)=\sum^k_{i=1}u_i(t)X_i|_{\gamma(t)},\quad \text{a.e. $t\in \left[0,T\right]$}.
\]
A sub-Riemannian geodesic is an admissible curve $\gamma:\left[0,T\right]\rightarrow M$ such that $\left\|\dot{\gamma}(t)\right\|$ is constant and for any $t\in [0,T]$ there exists an interval  $t\in\left(t_1,t_2\right)\subseteq \left[0,T\right]$, such that the restriction $\gamma|_{\left[t_1,t_2\right]}$ minimizes the length between its endpoints.  Geodesics for which  $\left\|\dot{\gamma}(t)\right\|=1$ are called length parametrized (or of unit speed).

The sub-Riemannian Hamiltonian is written as  
\begin{equation}
\label{sub-Hamiltonian}
H(\ld)\doteq\frac{1}{2}\sum^k_{i=1}\left\langle \ld,X_i\right\rangle^2\quad \forall\ld\in T^*M,
\end{equation}
 where $\left\langle \ld,\cdot\right\rangle$ denotes the action of the covector $\ld$ on vectors. We recall that the definition of $H$ in (\ref{sub-Hamiltonian}) is intrinsic and does not depend on the frame $\{X_1,\ldots,X_k\}$. For $\lambda\in T^{*}M$, $T_{\lambda}\left(T^*M\right)$ is a symplectic vector space with the canonical symplectic form $\sigma_{\lambda}$, defined as the differential of the Liouville form. Recall that the tautological (or Liouville) $1$-form on $T^*M$ is $s\in \Lambda^1(T^*M)$, and it is defined as follows:
 \[
s : \lambda \rightarrow s_{\lambda} \in T_{\lambda}^*(T^*M),\quad \langle s_{\lambda},w\rangle \doteq \langle\lambda,\pi_*w\rangle,\quad  \text{$\lambda \in T^*M, w \in T_{\lambda}T^*M$}, 
 \]
 where $ \pi: T^*M \rightarrow M$ denotes the canonical projection.

Denote by $\vv{a}$ the Hamiltonian vector field on $T^*M$ associated  with a function $a\in C^{\infty}(T^*M)$. The vector field $\vv{a}$ is given by the formula $da=\sigma(\cdot,\vv{a})$. For $i=1,\ldots,k$, let $h_i\in C^{\infty}(T^*M)$ be the linear-on-fiber functions defined by $h_i(\ld)\dot{=}\left\langle \ld,X_i\right\rangle$. Notice that 
\[
 \vv{H}=\sum^k_{i=1}h_i\vv{h}_i.
\]

We recall now a weak version of the Pontryagin Maximum Principle (PMP) in the sub-Riemannian setting (see \cite{PBG}).  This principle tell us that trajectories minimizing the distance between two points are solutions of first-order necessary conditions for optimality. For an elementary proof, the reader can consult the reference \cite{ABB1}. 

\begin{theorem}
\label{PMS}
Let $\gamma:[0,T]\rightarrow M$ be a sub-Riemannian geodesic associated with a non-zero control $u\in L^{\infty}([0,T],\mathbb{R}^k)$. Then there exists a Lipschitz curve $\lambda : [0,T] \rightarrow T^*M$, such that $\pi\circ\ld=\gamma$ and only one of the following conditions holds for a.e. $t \in[0, T ]$:
\begin{enumerate}
\item $\dot{\lambda}(t)=\vv{H}|_{\ld}$, with  $h_i\left(\ld(t)\right)=u_i(t)$,
\item $\dot{\lambda}(t)=\sum^k_{i=1}u_i(t)\vv{h}_i|_{\ld(t)}$,  $h_i\left(\ld(t)\right)=0$.
\end{enumerate}
\end{theorem}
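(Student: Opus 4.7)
The plan is to derive this via the Lagrange multiplier rule applied to the endpoint map, which is the standard route to the Pontryagin Maximum Principle. Fix a subinterval $[t_1,t_2]\subset[0,T]$ on which $\gamma$ is length-minimizing. Since $\gamma$ has constant speed, minimality of length in fixed time $t_2-t_1$ is equivalent to minimality of the energy $J(u)=\tfrac12\int_{t_1}^{t_2}|u(s)|^2\,ds$ subject to the control ODE $\dot\gamma=\sum_i u_i X_i(\gamma)$ and the endpoint constraint. Introduce the endpoint map $E:L^\infty([t_1,t_2],\R^k)\to M$, $E(v):=\gamma_v(t_2)$; standard ODE theory makes $E$ Fr\'echet smooth with
\[
dE_u[v]=\int_{t_1}^{t_2}(\Phi_{s,t_2})_*\Bigl(\sum_i v_i(s)X_i(\gamma(s))\Bigr)\,ds,
\]
where $\Phi_{s,t_2}$ is the flow of the time-varying field $\sum_i u_i X_i$ between $s$ and $t_2$.

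Next I would apply the Lagrange multiplier rule to the pair $(E,J)$, producing exactly the two cases of the theorem. Either $dE_u$ is not surjective, in which case there exists $\lambda_{t_2}\in T^*_{\gamma(t_2)}M\setminus\{0\}$ with $\lambda_{t_2}\circ dE_u=0$ (the \emph{abnormal} case), or $dE_u$ is surjective, and then the multiplier rule yields $\lambda_{t_2}\in T^*_{\gamma(t_2)}M$ with $dJ_u=\lambda_{t_2}\circ dE_u$ (the \emph{normal} case). Define the Lipschitz curve $\lambda(s):=(\Phi_{s,t_2})^*\lambda_{t_2}\in T^*_{\gamma(s)}M$; clearly $\pi\circ\lambda=\gamma$, and the flow is Lipschitz in $s$. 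Differentiating this pullback in $s$ and using that the flow is generated by $\sum_i u_i(s)X_i$, one obtains $\dot\lambda(s)=\sum_i u_i(s)\vv h_i|_{\lambda(s)}$ in both cases; this is the universal ``natural lift'' equation.

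The final step is to identify $h_i(\lambda(s))$ by testing the multiplier identities against indicator-type variations $v=\mathbf{1}_{[s,s+\varepsilon]}e_i$ and letting $\varepsilon\to 0$. In the abnormal case this yields $h_i(\lambda(s))=\langle\lambda(s),X_i(\gamma(s))\rangle=0$ a.e., giving condition (2) directly. In the normal case it yields $h_i(\lambda(s))=u_i(s)$ a.e., whence $\sum_i u_i\vv h_i=\sum_i h_i\vv h_i=\vv H|_\lambda$, giving (1). Extension from the minimizing subinterval to all of $[0,T]$ is by gluing along a covering by such subintervals; the pointwise conditions on the $h_i$ together with uniqueness of the natural lift make the glue well-defined.

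The main obstacle I expect is implementing the Lagrange multiplier dichotomy rigorously in the infinite-dimensional Banach space setting. One must distinguish between $dE_u$ having closed range (where the classical rule applies verbatim) and having non-closed range; for smooth $E$ on $L^\infty$ this is handled by a Lyusternik-style inverse mapping argument, but verifying nontriviality of the multiplier in the abnormal case, and the ``only one of'' clause in the statement (which really says that a given $\lambda$ satisfies either the normal or the abnormal relations for the $h_i$, not both unless $u\equiv 0$), requires some care. The Fr\'echet smoothness of $E$ on $L^\infty$, while routine, also must be established before the abstract multiplier rule can be invoked.
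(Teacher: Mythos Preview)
The paper does not prove this theorem: it is stated as a recalled result, with the reader referred to \cite{PBG} and \cite{ABB1} for a proof. Your sketch is essentially the standard argument found in those references---the Lagrange multiplier rule applied to the endpoint map, with the covector defined by pulling back a terminal multiplier along the flow of the time-varying field $\sum_i u_i X_i$---so there is nothing to compare against here.

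A minor comment on your sketch itself: the gluing step at the end is the one place where the argument as written is genuinely incomplete. On overlapping minimizing subintervals you produce \emph{a} lift $\lambda$, but nothing yet forces the lifts to agree on the overlap (the natural lift is unique only once the terminal covector $\lambda_{t_2}$ is fixed, and different subintervals give different terminal times). In the normal case the condition $h_i(\lambda(s))=u_i(s)$ pins down $\lambda$ only along the distribution, not in the vertical directions, so the extension is not automatic. The clean way around this is to work on a single minimizing subinterval and then invoke a separate argument (e.g., that the Hamiltonian flow preserves the extremal conditions) to propagate $\lambda$ to all of $[0,T]$; this is how the cited references handle it.
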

If $\ld : [0,T]\rightarrow M$ is a solution of (1) (resp. (2)) it is called a \emph{normal} (resp. \emph{abnormal}) extremal. It is well known that if $\ld(t)$ is a normal extremal, then its projection $\gamma(t) := \pi(\ld(t))$ is a smooth geodesic. This does not hold in general for abnormal extremals. Notice that extremals satisfying (1) are simply integral lines of the Hamiltonian field $\vv{H}$ . Thus, let $\ld(t) = e^{t\vv{H}} (\ld_0)$ denote the integral line of $\vv{H}$, with initial condition $\ld(0) = \ld_0$. On the other hand, a geodesic can be at the same time normal and abnormal, namely it admits distinct extremals, satisfying (1) and (2). In the Riemannian setting there are no abnormal extremals. 

\begin{definition}
\label{Strongly}
A normal extremal trajectory $\gamma : [0, T ] \rightarrow M$ is called \emph{strictly normal} if it is not abnormal. Moreover, if for all $s \in [0,T]$ the restriction $\gamma|_{[0,s]}$ is also strictly normal, then $\gamma$ is called \emph{strongly normal}.
\end{definition}

\subsection{Geodesic Flag and Young Diagram}

Let $\gamma:\left[0,T\right]\rightarrow M$ be a smooth admissible curve such that $\gamma(0)=x_0$. By definition, this means that $\dot{\gamma}(t)\in\mathscr{D}_{\gamma(t)}$ for all $t\in[0,T]$. Consider a smooth horizontal extension of the tangent vector, namely an horizontal vector field $\mathsf{T} \in \mathscr{D}$ such that $\mathsf{T}|_{\gamma(t)} = \dot{\gamma}(t)$.

For each smooth admissible curve, we consider a family of subspaces, which is related with a micro-local characterization of the sub-Riemannian structure  along the trajectory itself. 
\begin{definition}[\cite{ABR1}]
\label{flag}
The flag of the admissible curve $\gamma$ is the sequence of subspaces 
\[
\mathscr{F}^i_{\gamma}(t)\doteq\mathrm{span}\{\mathcal{L}^j_{\mathsf{T}}(X)|_{\gamma(t)}:X\in\overline{\mathscr{D}},j\leq i-1\}\subset T_{\gamma(t)}M,
\]
where $\mathcal{L}_{\mathsf{T}}$ denotes the Lie derivative in the direction of $\mathsf{T}$.
\end{definition}
Notice that, by definition, this is a filtration of $T_{\gamma(t)}M$, i.e. $\mathscr{F}^i_{\gamma} (t)\subset \mathscr{F}^{ i+1}_{\gamma}(t)$, for all $i\geq1$. Moreover, $\mathscr{F}^1_{\gamma}(t) = \mathscr{D}_{\gamma(t)}$. 
\begin{definition}[\cite{ABR1}]
Let $k_i(t)\doteq\dim\mathscr{F}^i_{\gamma}(t)$. The growth vector of the admissible curve $\gamma$ is the sequence of integers  $\mathcal{G}_{\gamma}(t)=\{k_1(t), k_2(t),\ldots\}$.
\end{definition}

\begin{definition}[\cite{ABR1}]
\label{ampledef}
Let $\gamma:[0,T]\rightarrow M$ be an admissible curve, with growth vector $\mathcal{G}_{\gamma}(t)$. We say that the geodesic is:
\begin{enumerate}
\item \emph{Equiregular} at $t$ if its growth vector is locally constant at $t$. 
\item \emph{Ample} at $t$  if there exists an integer $m = m(t)$ such that $\mathscr{F}^{m(t)}_{\gamma}(t)=T_{\gamma(t)}M$. We call the minimal $m(t)$ such that the curve is ample \emph{the step at $t$ of the admissible curve}.
\end{enumerate}
Finally, an admissible curve is ample (resp. equiregular ) if it is ample (resp. equiregular) at each $t \in [0,T]$.
\end{definition}

We stress that equiregular (resp. ample) geodesics are the microlocal counterpart of equiregular (resp. bracket-generating) distributions.

Let 
\[
d_i\doteq\dim\mathscr{F}^i_{\gamma}(t)-\dim\mathscr{F}^{i-1}_{\gamma}(t), 
\]
for $i\geq 1$, be the
increment of dimension of the flag of the geodesic at each step (with the convention $k_0\doteq 0$). For an  equiregular geodesic we have $d_1\geq d_2\geq \ldots\geq d_m$. This result was proved  in \cite[Appendix E]{ABR1}. Moreover, if the geodesic is also ample we have $\sum^{m}_{i=1}d_i=n$. 

For any  ample and  equiregular geodesic $\gamma:[0,T]\rightarrow M$, we draw a \emph{Young diagram} $D$ with $d_i$ blocks in the $i$-th column, with $i\geq 1$, and we define $n_1,\ldots,n_k$ as the lengths of its rows (that may depend on $\gamma$).

\subsection{Curvature operator} In order to carry out the construction of the \emph{curvature operator} done in \cite{ABR1}, we recall the definitions of \emph{geodesic cost} and  of \emph{second differential} of a function.

\begin{definition}
The geodesic cost associated with a strongly normal geodesic $\gamma:[0,T]\rightarrow M$ with $\gamma(0)=x_0$ is the family of functions
\[
c_t(x)\doteq-\frac{1}{2t}d^2(x,\gamma(t)),\quad x\in M,t>0.
\]
\end{definition}

The following theorem, whose proof can be found in \cite{ABR2}, gives us the regularity of the geodesic cost, and more importantly, it will allow us to define its Hessian.

\begin{theorem} 
\label{critical-ct}
 Let $x_0 \in M$ and $\gamma:[0,T]\rightarrow M$  be  a strongly normal geodesic with $\gamma(0)=x_0$ and initial covector $\lambda_0$. Then, there exists $\varepsilon> 0$ and an
open set $U \subset (0, \varepsilon) \times M$ such that
 \begin{enumerate}
\item $(t,x_0) \in U$ for all $t \in (0,\varepsilon)$,
\item the function $(t,x)\rightarrow c_t(x)$ is smooth on $U$,
\item For any $(t,x) \in U$, the covector $\lambda_x = d_xc_t$ is the initial covector of the unique minimizing geodesic connecting $x$ with $\gamma(t)$ in time $t$.
 \end{enumerate}
In particular, $d_{x_0}c_t=\lambda_0$ and $x_0$ is a critical point for the function $\dot{c}_t \doteq \frac{d}{dt} c_t$ for every $t \in (0, \varepsilon)$.
\end{theorem}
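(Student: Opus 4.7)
The plan is to realise the geodesic cost through the sub-Riemannian Hamiltonian exponential map and then invoke the implicit function theorem, with the \emph{strongly normal} hypothesis playing the role of a non-degeneracy condition. I would first introduce the map
$$\Psi_t : \mathcal{U}_{\lambda_0}\subset T^*M \longrightarrow M\times M, \qquad \lambda \longmapsto \bigl(\pi(\lambda),\ \pi(e^{t\vv{H}}(\lambda))\bigr),$$
so that $\Psi_t(\lambda_0)=(x_0,\gamma(t))$. The crucial observation is that being strongly normal prevents any sub-arc $\gamma|_{[0,s]}$ from admitting an abnormal lift, and combined with the absence of conjugate points on sufficiently short normal extremals this forces $d\Psi_t|_{\lambda_0}$ to be an isomorphism for every sufficiently small $t>0$. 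Hence $\Psi_t$ is a local diffeomorphism and there is a smooth map $(t,x)\mapsto \lambda(t,x)\in T_x^*M$, defined on an open set $U\subset(0,\varepsilon)\times M$ containing $(0,\varepsilon)\times\{x_0\}$, characterised by $\Psi_t(\lambda(t,x))=(x,\gamma(t))$.

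Since $\lambda(t,x)$ lies close to $\lambda_0$, which is the initial covector of the unique length-minimising geodesic $\gamma$, a standard openness argument for normal minimizers shows that the corresponding projected extremal is itself the unique length-minimizing geodesic from $x$ to $\gamma(t)$ in time $t$. Therefore
$$c_t(x) = -\frac{1}{2t}\,d^2(x,\gamma(t)) = -t\,H(\lambda(t,x)),$$
which simultaneously delivers the open set $U$ of part (1), the smoothness claim (2), and the identification of $\lambda_x=\lambda(t,x)$ as the initial covector in (3). To obtain $d_xc_t=\lambda(t,x)$ I would apply the first-variation formula for the Hamiltonian action: along any normal extremal, the tautological one-form satisfies $\langle s,\vv{H}\rangle = 2H$, so that the action $\int_0^t\!\bigl(s - H\,d\tau\bigr)$ evaluated along the lift equals $tH(\lambda(t,x))$, and varying the initial endpoint $x$ while keeping the target $\gamma(t)$ fixed produces exactly $\lambda(t,x)$ as the differential in $x$. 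Specializing to $x=x_0$ gives $d_{x_0}c_t=\lambda_0$, which is independent of $t$; differentiating in $t$ then yields $d_{x_0}\dot{c}_t=0$, i.e.\ $x_0$ is a critical point of $\dot{c}_t$.

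The main obstacle is the first step: establishing that $d\Psi_t|_{\lambda_0}$ is non-degenerate for all small $t>0$. The strongly normal assumption is what rules out the two possible sources of failure—abnormal components of the extremal (killed by strict normality on every subinterval $[0,s]$) and conjugate-time singularities (killed by restricting to $t$ small enough). Once this uniform non-degeneracy is in hand the remainder of the theorem is a routine combination of the implicit function theorem applied to $\Psi_t$ with the action-integral identity for $d_xc_t$, and the conclusions about $\dot{c}_t$ follow by direct differentiation in $t$.
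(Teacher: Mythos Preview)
The paper does not actually supply a proof of this theorem: it states explicitly that ``the following theorem, whose proof can be found in \cite{ABR2}'' and then records the statement for later use. So there is no in-paper argument to compare against.

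That said, your outline is precisely the standard route taken in the Agrachev--Barilari--Rizzi papers (\cite{ABR1}, \cite{ABR2}). The map $\Psi_t(\lambda)=(\pi(\lambda),\pi(e^{t\vv{H}}\lambda))$ is exactly the tool used there, and the identification $c_t(x)=-tH(\lambda(t,x))$ together with the first-variation identity $d_xc_t=\lambda(t,x)$ is the same mechanism. Your diagnosis of the crux is also accurate: the kernel of $d\Psi_t|_{\lambda_0}$ is $\mathcal{V}_{\lambda_0}\cap e^{-t\vv{H}}_*\mathcal{V}_{\lambda(t)}$, and the strongly normal hypothesis is what guarantees this intersection is trivial for all small $t>0$ (a nonzero element would produce an abnormal lift of $\gamma|_{[0,t]}$). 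One point you gloss over is the passage from ``$\Psi_t$ is a local diffeomorphism'' to ``the corresponding extremal is the \emph{unique minimizer}''; this requires a separate compactness/continuity argument for minimizers (shrinking $U$ if necessary), not merely openness of the normal condition, but this is routine and handled in the cited references.
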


Now, consider a smooth function  $f : M\rightarrow \R$. Its first differential at a point $x \in M$ is the linear map $d_x f : T_x M\rightarrow \R$. The second differential of $f$ is well defined only at a critical point, i.e., at those points $x$ such that $d_xf = 0$. Indeed, in this case the map
\[
d^2_x f : T_xM \times T_x M\rightarrow \R,\quad d^2_x f(v,w)=V(Wf)( x ),
\]
where $V, W$ are vector fields such that $V(x) = v$ and $W (x) = w$, respectively, is a well defined symmetric bilinear form that does not depend on the choice of the extensions. The associated quadratic form, that we denote by the same symbol $d^2_xf : T_xM \rightarrow \R$, is defined by
\[
d^2_xf(v)=\frac{d^2}{dt^2}\Big|_{t=0}f(\alpha(t)),\quad \text{where $\alpha(0)=x$ and $\dot{\alpha}(0)=v$}.
\]
By Theorem \ref{critical-ct}, for every $t\in (0,\varepsilon)$, the function $x\rightarrow \dot{c}_t(x)$ has a critical point at $x_0$. Remember that $\lambda_0$ is the initial covector of the geodesic $\gamma$. Now, using the inner product $g$ on $\mathscr{D}_x$ we can associated with $d^2_{x_0}\dot{c}_t(v)|_{\mathscr{D}}$ a family of symmetric operators on the distribution $\mathcal{Q}_{\lambda_0}(t)$ defined by the identity
\[
d^2_{x_0}\dot{c}_t(v)\doteq g\left( \mathcal{Q}_{\lambda_0}(t)v,v\right)_{\lambda_0}, \quad t\in(0,\varepsilon),v\in\mathscr{D}_{x_0}.
\]
The following theorems, namely Theorem \ref{TheoremA}, Theorem \ref{TheoremB} and Theorem \ref{TheoremC}, are among the main results in the already mentioned paper \cite{ABR1}. These important results are valid for a wide class of optimal control problems.
\begin{theorem}
\label{TheoremA}
Let $\gamma : [0,T] \rightarrow M$ be an ample geodesic at $t=0$, with initial covector $\lambda_0\in T^* M$. The map $t\rightarrow t^2\mathcal{Q}_{\lambda_0(t)}$ can be extended to a smooth family of operators on $\mathscr{D}_{x_0}$ for small $t \geq 0$, symmetric with respect to $g$. Moreover,
\[
\mathcal{I}_{\lambda_0}\dot{=}\lim_{t\rightarrow 0^+}t^2\mathcal{Q}_{\lambda_0(t)}\geq \mathbb{I}>0
\]
as operators on $\left(\mathscr{D}_{x_0},g_{x_0}\right)$. Finally,
\[
\frac{d}{dt}\Big|_{t=0}t^2\mathcal{Q}_{\lambda_0(t)}=0.
\]
\end{theorem}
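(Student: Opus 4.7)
The plan is to reformulate $\mathcal{Q}_{\lambda_0}(t)$ in symplectic terms via the Jacobi curve, then analyze the singularity at $t=0$ using the ample hypothesis. Concretely, by Theorem \ref{critical-ct} the map $x \mapsto d_x c_t$ is a smooth section of $T^*M$ near $x_0$ whose image under the Hamiltonian flow $e^{t\vv{H}}$ lies in the fiber $T^*_{\gamma(t)}M$. Differentiating this section at $x_0$ and composing with $(e^{t\vv{H}})_*$ sends $T_{x_0}M$ into the vertical subspace $\mathcal{V}_{\lambda(t)} \subset T_{\lambda(t)}(T^*M)$. Pulling everything back to $T_{\lambda_0}(T^*M)$ via $(e^{-t\vv{H}})_*$ gives a curve of Lagrangian subspaces
\[
L(t) \doteq (e^{-t\vv{H}})_*\mathcal{V}_{\lambda(t)}
\]
in the Lagrangian Grassmannian of $T_{\lambda_0}(T^*M)$, endowed with the canonical symplectic form $\sigma_{\lambda_0}$. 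This is the \emph{Jacobi curve} associated with $\gamma$.

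The quadratic form $d^2_{x_0}\dot{c}_t$ can then be written as the derivative (in $t$) of a one-parameter family of quadratic forms on $\mathscr{D}_{x_0}$ measuring the transversality of $L(t)$ to the vertical fiber $L(0)=\mathcal{V}_{\lambda_0}$. More precisely, choosing a horizontal Lagrangian complement $H$ of $L(0)$ (a ``reference Darboux frame''), every $L(t)$ transverse to $H$ can be represented as the graph of a symmetric operator $S(t)\colon L(0)\to H$ with respect to $\sigma_{\lambda_0}$; one obtains $\mathcal{Q}_{\lambda_0}(t) = \pi_{\mathscr{D}}\circ \dot{S}(t)^{-1}\circ \pi_{\mathscr{D}}^*$ up to pre-composition with the projection onto the distribution. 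Because $L(t)\to L(0)$ as $t\to 0$, $S(t)^{-1}$ diverges and $\mathcal{Q}_{\lambda_0}(t)$ develops a pole; the ample hypothesis is precisely what controls the order of this pole.

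The key step is therefore the following normal-form computation. Under the ample assumption with Young diagram $D$ having rows of lengths $n_1\geq\dots\geq n_k$, one constructs a canonical moving Darboux frame along $\lambda(t)$ (the Zelenko--Li frame, adapted to $D$) in which the matrix representative of $L(t)$ has a block structure whose $(i,j)$-block vanishes at $t=0$ to order $n_i+n_j-1$. Inverting this and composing with the projection onto $\mathscr{D}_{x_0}$ yields the Laurent expansion
\[
\mathcal{Q}_{\lambda_0}(t) \;=\; \frac{1}{t^2}\mathcal{I}_{\lambda_0} + O(1),
\qquad \mathcal{I}_{\lambda_0}=\mathrm{diag}(n_1^2,\dots,n_k^2),
\]
in a suitable $g_{x_0}$-orthonormal basis of $\mathscr{D}_{x_0}$. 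In particular $t^2\mathcal{Q}_{\lambda_0}(t)$ extends smoothly across $t=0$, and because $n_i\geq 1$ for every row of $D$, one has $\mathcal{I}_{\lambda_0}\geq\mathbb{I}>0$, which is the claimed positivity.

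Finally, the vanishing of the derivative at $t=0$ reflects a parity property of the expansion. Expanding $C(t,s)=-tc_t(\gamma_w(s))$ jointly in $(t,s)$ and using the invariance of the cost under exchanging endpoints of the minimizing geodesic (together with $d_{x_0}c_t = \lambda_0$), one finds that the Taylor expansion of $C(t,s)$ has no mixed term of the form $t^3 s^2$, i.e.\ the first correction to $t^2\mathcal{Q}_{\lambda_0}(t)$ vanishes. Equivalently, in the Darboux frame above the coefficient of $t$ in the smooth part of $t^2\mathcal{Q}_{\lambda_0}(t)$ can be seen to be proportional to the trace of an odd symplectic invariant of the Jacobi curve that vanishes by the very structural equations of the canonical frame. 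The main technical obstacle in this plan is the construction and uniqueness (modulo orthogonal transformations within each ``row-length block'') of the canonical Darboux frame adapted to the Young diagram: this is where the ample hypothesis is fully exploited and where the explicit form of $\mathcal{I}_{\lambda_0}$ is pinned down.
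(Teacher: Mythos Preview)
First, note that the paper does not actually prove this theorem: it is quoted from \cite{ABR1} as one of the main results there, so there is no ``paper's own proof'' here to compare against. That said, your proposal has a genuine gap relative to the stated hypotheses.

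Theorem~\ref{TheoremA} assumes only that $\gamma$ is \emph{ample at $t=0$}; it does \emph{not} assume equiregularity. Your argument, however, passes through the Zelenko--Li canonical Darboux frame adapted to the Young diagram $D$. The existence of this frame (Theorem~\ref{Can-Frame}) requires the Jacobi curve to be ample \emph{and equiregular}; without equiregularity the row lengths $n_1,\dots,n_k$ are not even well defined at the point in question, and the normal-form construction you invoke does not go through. In fact, the explicit identification $\mathcal{I}_{\lambda_0}=\mathrm{diag}(n_1^2,\dots,n_k^2)$ that you use to conclude $\mathcal{I}_{\lambda_0}\geq\mathbb{I}$ is exactly the content of Theorem~\ref{TheoremB}, which carries the extra equiregular hypothesis. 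So as written your argument proves Theorem~\ref{TheoremB}, not Theorem~\ref{TheoremA}: for the latter one needs a proof of smooth extendibility and of the lower bound $\mathcal{I}_{\lambda_0}\geq\mathbb{I}$ that works for a merely ample curve, without access to the canonical frame or the Young diagram.

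A secondary issue: your justification of $\frac{d}{dt}\big|_{t=0}t^2\mathcal{Q}_{\lambda_0}(t)=0$ via an ``endpoint-exchange parity'' of $C(t,s)$ is not correct in the sub-Riemannian setting. The function $C(t,s)=\tfrac{1}{2}d^2(\gamma_w(s),\gamma_v(t))$ is symmetric under swapping the two geodesics only when both directions lie in $\mathscr{D}_{x_0}$ and play symmetric roles; here $\gamma=\gamma_v$ is fixed and $w$ ranges over $\mathscr{D}_{x_0}$, so there is no a priori $(t,s)$-symmetry forcing the $t^3s^2$ coefficient to vanish. In \cite{ABR1} this vanishing comes instead from the structure of the Laurent expansion of $S(t)^{-1}$ (cf.\ Theorem~\ref{Sbeta} in the equiregular case), not from a cost-symmetry argument.
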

\begin{definition}
The curvature operator $\mathcal{R}_{\lambda_0}:\mathscr{D}_{x_0}\rightarrow\mathscr{D}_{x_0}$ at $\lambda_0\in T^*_{x_0}M$ is defined by
\begin{equation*}
\label{curvature}
\mathcal{R}_{\lambda_0}\doteq\frac{3}{2}\frac{d^2}{dt^2}\Big|_{t=0}t^2\mathcal{Q}_{\lambda_0}(t).
\end{equation*}
Moreover, the Ricci curvature at $\lambda_0\in T^*_{x_0}M$ is defined by $Ric\left(\lambda_0\right)\dot{=}\mathrm{tr}\ \mathcal{R}_{\lambda_0}$.
\end{definition}

In particular, we have the following Laurent expansion for the family of symmetric operators $\mathcal{Q}_{\lambda_0}(t):\mathscr{D}_{x_0}\rightarrow\mathscr{D}_{x_0}$
\begin{equation*}
\mathcal{Q}_{\lambda_0}(t)=\frac{1}{t^2}\mathcal{I}_{\lambda_0}+\frac{1}{3}\mathcal{R}_{\lambda_0}+O(t) \quad t>0.
\end{equation*} 

\begin{theorem}
\label{TheoremB}
Let $\gamma:[0,T]\rightarrow M$ be an ample and equiregular geodesic. Then, the symmetric operator $\mathcal{I}_{\lambda_0}:\mathscr{D}_{x_0}\rightarrow\mathscr{D}_{x_0}$ satisfies
\begin{enumerate}
\item \emph{spec} $\mathcal{I}_{\lambda_0}=\{n^2_1,\ldots,n^2_k\}$,
\item \emph{tr} $\mathcal{I}_{\lambda_0}=\sum^k_{i=1}n^2_i$,
\end{enumerate}
where $n_1,\ldots, n_k$ are the lengths of the rows of  the associated Young diagram.
\end{theorem}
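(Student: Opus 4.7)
The strategy is to compute $\mathcal{I}_{\lambda_0} = \lim_{t\to 0^+} t^2 \mathcal{Q}_{\lambda_0}(t)$ through the symplectic structure of the Jacobi curve $J_{\lambda_0}(t) \subset T_{\lambda_0}(T^*M)$ attached to $\gamma$, and to read off its eigenvalues from a canonical Darboux frame adapted to the Young diagram $D$ of the geodesic. The general framework of \cite{ABR1} represents $d^2_{x_0}\dot{c}_t$ as a quadratic form on $\mathscr{D}_{x_0}$ canonically built from $J_{\lambda_0}$: ampleness at $t=0$ is exactly the transversality condition $J_{\lambda_0}(t) \cap J_{\lambda_0}(0) = 0$ for small $t > 0$, which makes $\mathcal{Q}_{\lambda_0}(t)$ well-defined, and equiregularity keeps the induced osculating flag of $J_{\lambda_0}$ locally constant, which is the hypothesis that lets us assign the Young diagram $D$ with row lengths $n_1, \ldots, n_k$.

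The core tool is the Zelenko--Li canonical (Darboux) moving frame along $J_{\lambda_0}$, whose existence under ampleness and equiregularity is the main construction of \cite{ZL}. The structural equations of this frame are rigid once $D$ is fixed, and they split $J_{\lambda_0}(t)$, up to symplectic perturbations vanishing to higher order than $t^{-2}$, into $k$ independent symplectic blocks, one per row of $D$, with the block attached to the $i$-th row symplectomorphic to the model polynomial Jacobi curve in a $2n_i$-dimensional symplectic space. Pulled back to $\mathscr{D}_{x_0}$, this decomposition exhibits $\mathcal{Q}_{\lambda_0}(t)$ as block-diagonal, at the level of its principal $t^{-2}$ singularity, in an orthonormal basis of $\mathscr{D}_{x_0}$ indexed by the rows of $D$.

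Inside the model block of length $n_i$, the restriction of $\mathcal{Q}_{\lambda_0}(t)$ to the corresponding one-dimensional row subspace can be computed by hand: after the natural homogeneity rescalings it amounts to inverting a Hilbert-type moment matrix whose entries have the form $(p+q-1)^{-1}$, and a short combinatorial calculation shows that the relevant $(1,1)$-entry of the rescaled inverse equals exactly $n_i^2$. Summing over rows gives $\mathrm{spec}\,\mathcal{I}_{\lambda_0} = \{n_1^2,\ldots,n_k^2\}$ and $\mathrm{tr}\,\mathcal{I}_{\lambda_0} = \sum_{i=1}^k n_i^2$. The main obstacle is not this final scalar computation but rather the construction and rigidity of the Zelenko--Li canonical frame together with the verification that the off-diagonal blocks of $\mathcal{Q}_{\lambda_0}(t)$ contribute only to strictly subleading orders in $t$; once those two facts are installed, the identification of the eigenvalues $n_i^2$ with the Young-diagram row lengths is a transparent moment-matrix identity.
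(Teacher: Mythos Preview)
The paper does not give its own proof of this theorem: it is quoted from \cite{ABR1} (as are Theorems~\ref{TheoremA} and~\ref{TheoremC}) and used as a black box. Your outline is correct and is precisely the \cite{ABR1} approach --- Zelenko--Li canonical frame, reduction to the flat model curve on each row of the Young diagram, and the Hilbert-matrix identity yielding $n_i^2$ in the $(1,1)$-slot. The paper even records, again without proof, the sharper expansion of $S^{\flat}(t)^{-1}$ from \cite[Corollary~7.5]{ABR1} (Theorem~\ref{Sbeta} here), whose leading term $-\delta_{ab}\,n_a^2/t$ gives, after differentiation, exactly the diagonal matrix $\mathrm{diag}(n_1^2,\ldots,n_k^2)$ you describe.
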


Let $\Delta_{\mu}$ be the sub-Laplacian associated with a smooth volume form $\mu$. The next result is an explicit expression for the asymptotic of the sub-Laplacian of the squared
distance from a geodesic, computed at the initial point $x_0$ of the geodesic $\gamma$. Let $\mathfrak{f}_t(\cdot)\dot{=}\frac{1}{2}d^2\left(\cdot,\gamma(t)\right)$. 

\begin{theorem}
\label{TheoremC}
Let $\gamma$ be an ample and equiregular geodesic with initial covector $\lambda_0\in T^*_{x_0}M$.   Assume 
also that $\dim \mathscr{D}$ is constant in a neighborhood of $x_0$. Then there exists a smooth n-form $\omega$ defined along $\gamma$, such that for any volume form $\mu$ on M, $\mu_{\gamma(t)} = e^{g(t)}\omega_{\gamma(t)}$,  we have
\[
\Delta_{\mu}\mathfrak{f}_t|_{x_0}=\operatorname{tr}\mathcal{I}_{\gamma}-\dot{g}(0)t-\frac{1}{3}\operatorname{Ric}(\lambda_0)t^2+O(t^2).
\]
\end{theorem}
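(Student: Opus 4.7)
The plan is to reduce the asymptotic of $\Delta_\mu\mathfrak{f}_t|_{x_0}$ to the Laurent expansion of $\mathcal{Q}_{\lambda_0}(t)$ supplied by Theorem \ref{TheoremA}, via the identity $\mathfrak{f}_t=-tc_t$. Fix a local orthonormal frame $X_1,\dots,X_k$ of $\mathscr{D}$ near $x_0$, so that
\[
\Delta_\mu\mathfrak{f}_t|_{x_0}=\sum_{i=1}^{k}X_i^2\mathfrak{f}_t(x_0)+\sum_{i=1}^{k}\operatorname{div}_\mu(X_i)(x_0)\,X_i\mathfrak{f}_t(x_0).
\]
By Theorem \ref{critical-ct}, $d_{x_0}c_t=\lambda_0$ is independent of $t$, hence $X_i\mathfrak{f}_t(x_0)=-t\,h_i(\lambda_0)$ and $d_{x_0}\dot c_t=0$. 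The vanishing of $d_{x_0}\dot c_t$ renders $\sum_i X_i^2\dot c_t(x_0)$ frame-invariant in its second-order content and equal to $\operatorname{tr}(d^2_{x_0}\dot c_t|_{\mathscr{D}})=\operatorname{tr}\mathcal{Q}_{\lambda_0}(t)$.

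By Theorem \ref{TheoremA}, $\operatorname{tr}\mathcal{Q}_{\lambda_0}(t)=\operatorname{tr}\mathcal{I}_{\lambda_0}/t^2+\operatorname{Ric}(\lambda_0)/3+O(t)$. Smoothness of $(t,x)\mapsto c_t(x)$ for small $t>0$ justifies exchanging $\partial_t$ and $\sum_i X_i^2$; granting that the resulting function of $t$ admits a matching Laurent expansion (a controlled expansion of the cost near $x_0$ via the nilpotent approximation and canonical frame of \cite{ABR1}), termwise integration in $t$ gives
\[
\sum_{i}X_i^2 c_t(x_0)=-\frac{\operatorname{tr}\mathcal{I}_{\lambda_0}}{t}+C_0+\frac{\operatorname{Ric}(\lambda_0)}{3}t+O(t^2),
\]
for an intrinsic constant $C_0$. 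Plugging this back yields
\[
\Delta_\mu\mathfrak{f}_t|_{x_0}=\operatorname{tr}\mathcal{I}_{\lambda_0}-L(\mu)\,t-\frac{\operatorname{Ric}(\lambda_0)}{3}t^2+O(t^2),\qquad L(\mu)\doteq C_0+\sum_{i=1}^{k}\operatorname{div}_\mu(X_i)(x_0)\,h_i(\lambda_0),
\]
which already matches the constant and quadratic coefficients of the claim.

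The remaining task is to interpret $L(\mu)$ as $\dot g(0)$. For $\mu'=e^f\mu$ one has $\operatorname{div}_{\mu'}X_i=\operatorname{div}_\mu X_i+X_i(f)$; using $\dot\gamma(0)=\sum_i h_i(\lambda_0)X_i|_{x_0}$ this yields $L(\mu')-L(\mu)=(f\circ\gamma)'(0)$. Thus $L$ transforms under a change of volume precisely as $\dot g(0)$ does, where $g$ is defined by $\mu_{\gamma(t)}=e^{g(t)}\omega_{\gamma(t)}$. It is therefore sufficient to exhibit a single smooth $n$-form $\omega$ along $\gamma$ with $L(\omega)=0$; the canonical choice from \cite{ABR1} is the one built from the Wronskian of the canonical symplectic frame of the Jacobi curve along $\gamma$, whose divergences along any horizontal orthonormal frame absorb the intrinsic constant $C_0$ by construction. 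Writing $\mu=e^{g}\omega$ then gives $L(\mu)=\dot g(0)$ and produces the stated expansion.

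The main obstacle is precisely this last step: the explicit construction of $\omega$ and the verification that $L(\omega)=0$. It requires the full symplectic-invariant machinery of the Jacobi curve (Sections \ref{Sec:CurvatureOperator} and \ref{Sec:Jacobi}) and a careful check that the divergence of the horizontal frame against the canonical $n$-form along $\gamma$ cancels the intrinsic constant $C_0$ produced by integrating $\operatorname{tr}\mathcal{Q}_{\lambda_0}(t)$. The remaining ingredients—in particular the Laurent structure of $t\mapsto\sum_i X_i^2 c_t(x_0)$—are technical but routine once canonical coordinates along $\gamma$ are in place.
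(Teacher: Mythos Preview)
The paper does not contain a proof of Theorem~\ref{TheoremC}: it is quoted verbatim as one of the main results of \cite{ABR1} (see the sentence immediately preceding Theorem~\ref{TheoremA}), so there is no in-paper argument against which to compare your proposal.

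As to the proposal itself, the architecture is the right one and is essentially how the result is obtained in \cite{ABR1}: pass from $\mathfrak{f}_t=-tc_t$ to $\operatorname{tr}\mathcal{Q}_{\lambda_0}(t)$ via the critical-point identity $d_{x_0}\dot c_t=0$, use the Laurent expansion of Theorem~\ref{TheoremA}, integrate in $t$, and identify the linear coefficient through the transformation law $\operatorname{div}_{e^f\mu}X=\operatorname{div}_\mu X+X(f)$. Two small caveats. First, $C_0$ as you define it is not intrinsic: $\sum_i X_i^2 c_t(x_0)$ is frame-dependent because $x_0$ is not a critical point of $c_t$; only the combination $C_0+\sum_i\operatorname{div}_\mu(X_i)(x_0)h_i(\lambda_0)$ is intrinsic, since $\Delta_\mu$ is. Second, and you say this yourself, the actual content of the theorem lies in the construction of the canonical $n$-form $\omega$ along $\gamma$ and the verification that it normalises the linear term; this is carried out in \cite{ABR1} using the canonical frame of the Jacobi curve and is not a routine check. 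Your sketch is therefore an accurate outline rather than a proof, with the substantive step correctly identified and deferred.
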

In the following section we recall how to  find the explicit expression of the \emph{curvature operator} $\mathcal{R}_{\lambda_0}$ when the geodesic $\gamma$ is also equiregular.  In \cite {ABR1} it was shown that in order to obtain such expression of the operator we need to compute certain symplectic invariants associated with the \emph{Jacobi curve}. 

\section{The Curvature Operator: Jacobi Curves}
\label{Sec:Jacobi}

In this section we recall the concept of \emph{Jacobi curve} associated with a normal geodesic, which is a curve of Lagrangian subspaces in a symplectic vector space. We also introduce a key technical tool, the so-called \emph{canonical frame}, associated with a monotone, ample, equiregular Jacobi curve. 

Let $(\Sigma,\sigma)$ be a $2n-$dimensional symplectic vector space. A subspace $\Lambda\subset\Sigma$ is called Lagrangian if it has dimension $n$ and $\sigma|_{\Lambda} \equiv 0$. The Lagrange Grassmannian $L(\Sigma)$ is the set of all $n-$dimensional Lagrangian subspaces of $\Sigma$.

Fix now $\Lambda \in L(\Sigma)$. The tangent space $T_{\Lambda}L(\Sigma)$ to the Lagrange Grassmannian at the point $\Lambda$ can be canonically identified with the set of quadratic forms on the space $\Lambda$ itself, namely
\[
T_{\Lambda}L(\Sigma) \simeq Q(\Lambda).
\]
Indeed, consider a smooth curve $\Lambda(\cdot)$ in $L(\Sigma)$ such that $\Lambda(0) = \Lambda$, and denote by $\dot{\Lambda} \in T_{\Lambda}L(\Sigma)$ its tangent
vector. For any point $z \in \Lambda$ and any smooth extension $z(t) \in \Lambda(t)$, we define the quadratic form 
\[
\dot{\Lambda} \doteq z   \rightarrow \sigma ( z , \dot{z} ) ,
\]
where $\dot{z} \doteq \dot{z}(0)$. A simple check shows that the definition does not depend on the extension $z(t)$.

Let $J(\cdot) \in L(\Sigma)$ be a smooth curve in the Lagrange Grassmannian. For $i \in \mathbb{N}$, consider
\[
J^{(i)}(t)=\mathrm{span}\ \Big\{  \frac{d^j}{d t^j}\ell(t)\Big|\  \ell(t)\in J(t),\ \ell(t)\ \mathrm{smooth},\ 0\leq j\leq i \Big\}\subset\Sigma,\quad  i\geq0.
\]
\begin{definition} 
The subspace $J^{(i)}(t)$ is the $i-$th extension of the curve $J(\cdot)$ at $t$. The flag
\[
 J(t) = J^{(0)}(t) \subset J^{(1)}(t) \subset J^{(2)}(t) \subset \ldots \subset \Sigma,
 \]
 is the associated flag of the curve at the point $t$. The curve $J(\cdot)$ is called:
 \begin{enumerate}
\item equiregular at $t$ if $\mathrm{dim}\ J^{(i)}(\cdot)$ is locally constant at $t$, for all $i \in\mathbb{N}$, 
\item ample at $t$ if there exists $N \in\mathbb{N}$ such that $J^{(N)}(t) = \Sigma$,
\item monotone increasing (resp. decreasing) at $t$ if $\dot{J}(t)$ is non-negative (resp. non-positive) as a quadratic form.
\end{enumerate}
The step of the curve at $t$ is the minimal $N \in\mathbb{N}$ such that $J^{(N)}(t) = \Sigma$.
\end{definition}

The Jacobi curve arises from the geometric interpretation of the second derivative of the \emph{geodesic cost}.  Thus, in order to define the Jacobi curve we need to recall the notion of \emph{second differential} of a function $f\in C^{\infty}(M)$ at non-critical points.

\begin{definition}
 Let $f \in C^{\infty}(M)$, and
 \[
df :M\rightarrow T^*M, \quad df :x\rightarrow d_x f.
 \]
Fix $x \in M$, and let $\lambda \doteq d_xf \in T^*M$. The second differential of $f$ at $x\in M$ is the linear map 
\[
d^2_xf  \doteq d_x(df) : T_xM\rightarrow T_{\lambda}(T^*M),\quad  d^2_x f : v\rightarrow \frac{d}{ds}\Big|_{s=0}d_{\gamma(s)}f,
\]
where $\gamma(\cdot)$ is a curve on $M$ such that $\gamma(0)=x$, and $\dot{\gamma}(0)=v$.
\end{definition}

Let $\gamma:[0,T]\rightarrow M$ be a strongly normal geodesic, with $\gamma(0)=x_0\in M$. Without loss of generality, we can choose $T$ sufficiently small so that the map $(t,x)\rightarrow c_t(x)$ is smooth in a neighborhood of $(0,T)\times\{x_0\}\subset\mathbb{R}\times M$, and $d_xc_t=\lambda_0$ is the initial convector associated with $\gamma$, i.e.,
\[
\lambda(t)=e^{t\vv{H}}(\lambda_0), \quad \gamma(t)=\pi(\lambda(t)), \quad \left(\text{i.e} \quad \lambda(t)\in T^{*}_{\gamma(t)}(M)\right).
\]
For any $\ld\in T^*M$, $\pi(\lambda)=x$, we denote with the symbol  $\mathcal{V}_{\lambda}\equiv T_{\lambda}(T^{*}_x M)$  the vertical subspace at the point $\lambda\in T^{*}M$, i.e. the tangent space at $\lambda$ to the fiber $T^*_x M$. Observe that, if $\pi:T^*M\rightarrow M$ is the bundle projection, $\mathcal{V}_{\lambda}=$ ker $\pi_{\ast}|_{T_{\lambda}(T^{*} M)}$.

\begin{definition}
The Jacobi curve associated with $\gamma$ is the smooth curve $J_{\lambda_0} : [0,T] \rightarrow L(T_{\lambda_0}(T^*M))$ defined by
\begin{equation}
J_{\lambda_0}(t)\doteq d^2_{x_0}c_t\left(T_{x_0}M\right),
\end{equation} 
for $t\in (0,T],$ and $J_{\lambda_0}(0)\doteq\mathcal{V}_{\lambda_0}.$
\end{definition} 
 Now, let $v\in T_{x_0}M$ and $\alpha$ a smooth curve such that $\alpha(0)=x_0$ and $\dot{\alpha}(0)=v$.  For $s$ small enough, $d_{\alpha(s)}c_t$ is the initial covector of the unique normal geodesic that connects $\alpha(s)$ and $\gamma(t)$ in time $t$, or in other words, $\pi\circ e^{t\vv{H}}\circ d_{\alpha(s)}c_t=\gamma(t)$. Then, 
\[
\pi_*\circ e^{t\vv{H}}_*\circ d^2_x c_t(v)=\frac{d}{ds}\Big|_{s=0}\pi\circ e^{t\vv{H}}\circ d_{\alpha(s)}c_t=0.
\]
Therefore, one can actually write 
\begin{equation}
\label{JVt}
J_{\lambda_0}(t)=e^{-t\vv{H}}_{\ast}\mathcal{V}_{\ld(t)}.
\end{equation} 
Moreover, from Proposition 6.12 in \cite{ABR1}, we have that the Jacobi curve $J_{\lambda}$ is monotone decreasing for every $\lambda\in T^{\ast}M$.

The following proposition provides the connection between the flag of a normal geodesic and the flag of the associated Jacobi curve, see Proposition 6.15 in \cite{ABR1}.
\begin{proposition}
 Let $\gamma(t) = (\pi\circ\lambda)(t)$ be a normal geodesic associated with the initial covector $\lambda_0$. The flag of the Jacobi curve $J_{\lambda_0}$ projects to the flag of the geodesic $\gamma$ at $t = 0$, namely
 \[
 \pi_{\ast}J^{(i)}_{\lambda_0}(0) = \mathscr{F}^i_{\gamma}(0), \quad \forall i \in \mathbb{N}. 
 \]
Moreover, $\dim J^{(i)}_{\lambda_0}(t) = n + \dim \mathscr{F}^i_{\gamma}(t)$. Therefore $\gamma$ is ample of step $m$ (resp. equiregular) if and only if $J_{\lambda_0}$  is ample of step $m$ (resp. equiregular).
\end{proposition}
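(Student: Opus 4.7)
The plan is to exploit the representation $J_{\lambda_0}(t) = e^{-t\vv{H}}_{\ast}\mathcal{V}_{\lambda(t)}$ from equation (\ref{JVt}) to reduce the flag of the Jacobi curve at $t=0$ to a Taylor expansion of pullbacks of vertical vector fields under the Hamiltonian flow. First I would choose a local frame $V_1,\ldots,V_n$ of vertical vector fields in a neighborhood of $\lambda_0$, so that $\mathcal{V}_\mu = \mathrm{span}\{V_\alpha|_\mu\}$ at each $\mu$. Any smooth section $\ell(t)$ of $J_{\lambda_0}$ can then be written
$$\ell(t) \;=\; \sum_\alpha a_\alpha(t)\,(e^{t\vv{H}})^{\ast}V_\alpha|_{\lambda_0},$$
using the flow identity $e^{-t\vv{H}}_{\ast}V|_{\lambda(t)} = (e^{t\vv{H}})^{\ast}V|_{\lambda_0}$. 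Expanding the pullback as the Lie series $\sum_{j\geq 0}\frac{t^j}{j!}(\mathrm{ad}\,\vv{H})^jV_\alpha$ and applying the Leibniz rule, the arbitrariness of the coefficients $a_\alpha(t)$ and their derivatives forces
$$J^{(i)}_{\lambda_0}(0) \;=\; \mathrm{span}\bigl\{(\mathrm{ad}\,\vv{H})^jV_\alpha|_{\lambda_0} : 0\leq j\leq i,\ 1\leq\alpha\leq n\bigr\}.$$

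Next I would project by $\pi_{\ast}$ and identify the image with $\mathscr{F}^i_\gamma(0)$. The $j=0$ contributions vanish since the $V_\alpha$ are vertical. For $j\geq 1$, I would exploit the decomposition $\vv{H}=\sum_i h_i\vv{h}_i$ together with the $\pi$-relatedness of $\vv{h}_i$ to $X_i$ and of $V_\alpha$ to $0$. A direct computation using $[fX,Y]=f[X,Y]-(Yf)X$ gives the base identity
$$\pi_{\ast}[\vv{H},V_\alpha]|_\mu \;=\; -\sum_i(V_\alpha h_i)(\mu)\,X_i|_{\pi(\mu)} \;\in\; \mathscr{D}_{\pi(\mu)},$$
and as the $V_\alpha h_i(\lambda_0)$ can be prescribed independently this already surjects onto $\mathscr{D}_{x_0}=\mathscr{F}^1_\gamma(0)$. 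Arguing by induction on $j$, each further commutator with $\vv{H}$ either produces an additional Lie derivative along the horizontal extension $\mathsf{T}=\sum_i h_i X_i$ of the tangent to $\gamma$, or contributes a new vertical term that, bracketed once more with $\vv{H}$, descends to a horizontal correction living in $\mathscr{F}^{j-1}_\gamma(0)$. In this way one establishes both $\pi_{\ast}(\mathrm{ad}\,\vv{H})^jV_\alpha|_{\lambda_0}\in\mathscr{F}^j_\gamma(0)$ and, by suitable choices of $V_\alpha$, the realization of every $\mathcal{L}^{j-1}_\mathsf{T}X|_{x_0}\in\mathscr{F}^j_\gamma(0)$.

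For the dimension formula at $t=0$, I would combine $\pi_{\ast}J^{(i)}_{\lambda_0}(0)=\mathscr{F}^i_\gamma(0)$ with the observation that $\ker\pi_{\ast}\big|_{J^{(i)}_{\lambda_0}(0)}=\mathcal{V}_{\lambda_0}$: the inclusion $\supseteq$ is automatic because $\mathcal{V}_{\lambda_0}=J^{(0)}_{\lambda_0}(0)\subseteq J^{(i)}_{\lambda_0}(0)$, while the reverse follows from $\ker\pi_{\ast}\big|_{T_{\lambda_0}T^{\ast}M}=\mathcal{V}_{\lambda_0}$. Rank-nullity then gives $\dim J^{(i)}_{\lambda_0}(0)=n+\dim\mathscr{F}^i_\gamma(0)$. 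To propagate the dimension formula to general $t$, I would use the time-shift identity $J_{\lambda_0}(t+s)=e^{-t\vv{H}}_{\ast}J_{\lambda(t)}(s)$, which yields $J^{(i)}_{\lambda_0}(t)=e^{-t\vv{H}}_{\ast}J^{(i)}_{\lambda(t)}(0)$; since $e^{-t\vv{H}}_{\ast}$ is a linear isomorphism and the flag of the time-shifted geodesic $s\mapsto\gamma(t+s)$ at $s=0$ coincides with the flag of $\gamma$ at $t$, we conclude $\dim J^{(i)}_{\lambda_0}(t)=n+\dim\mathscr{F}^i_\gamma(t)$. The ample/equiregular equivalences are then immediate: ampleness of step $m$ for $\gamma$ means $\dim\mathscr{F}^m_\gamma(t)=n$, equivalent via the dimension formula to $\dim J^{(m)}_{\lambda_0}(t)=2n$; equiregularity on $M$ and on the Lagrange Grassmannian differ only by the additive constant $n$.

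The main obstacle I expect is the inductive identification in the second step, namely matching $\pi_{\ast}(\mathrm{ad}\,\vv{H})^jV_\alpha|_{\lambda_0}$ with iterated Lie derivatives $\mathcal{L}^{j-1}_\mathsf{T}X|_{x_0}$. One must carefully track the horizontal and vertical components of the iterated brackets and verify that, modulo $\mathscr{F}^{j-1}_\gamma(0)$, the horizontal part reproduces exactly the Lie derivative defining the geodesic flag. The subtlety is that $\mathsf{T}$ is not intrinsic off $\gamma$, so the identification only lands cleanly in the quotient $\mathscr{F}^j_\gamma(0)/\mathscr{F}^{j-1}_\gamma(0)$, which is however precisely what is needed to match spans.
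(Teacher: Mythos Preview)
The paper does not give a proof of this proposition; it simply cites Proposition~6.15 of \cite{ABR1}. Your outline is essentially the argument used there: represent $J_{\lambda_0}(t)$ as the pushforward of the vertical subspace by the Hamiltonian flow, identify $J^{(i)}_{\lambda_0}(0)$ with the span of iterated $\mathrm{ad}\,\vv{H}$-brackets of vertical fields, and project by $\pi_*$. The inductive matching you single out as the main obstacle is indeed the heart of the argument, and your sketch of how the horizontal part reproduces $\mathcal{L}^{j-1}_{\mathsf{T}}X$ modulo $\mathscr{F}^{j-1}_\gamma(0)$ while the vertical remainder feeds back into lower-order terms is accurate.

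One cosmetic point: writing $\mathsf{T}=\sum_i h_iX_i$ is a mild abuse, since the $h_i$ are functions on $T^*M$ rather than on $M$. What is meant, as you yourself note at the end, is that $\mathsf{T}|_{\gamma(t)}=\sum_i h_i(\lambda(t))X_i|_{\gamma(t)}$ and any smooth horizontal extension to a neighbourhood of $\gamma$ suffices, the flag being extension-independent. With that understood, your dimension count via rank--nullity and the time-shift argument for general $t$ are both clean and correct.
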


In order to compute explicitly the expansion of the operator $\mathcal{Q}(t)$ we need to recall the concept of \emph{canonical frame} along the curve $J_{\lambda_0}$. This frame generalizes the concept of parallel transport from Riemannian geometry to (sufficiently regular) sub-Riemannian extremals. The canonical frame was first introduced by Agrachev and Gamkrelidze in \cite{AG}, Agrachev and Zelenko in \cite{AZ} and successively extended by Zelenko and Li in \cite{ZL}.

Consider an ample, equiregular geodesic, with Young diagram $D$, with $k$ rows, of length $n_1,\ldots,n_k$. Indeed $n_1 + \ldots + n_k = n$. The moving frame we are going to introduce is indexed by the boxes of the Young diagram, so we need to fix some terminology first. Each box is labelled ``$ai$'', where $a = 1,\ldots,k$ is the row index, and $i = 1,\ldots,n_a$ is the progressive box number, starting from the left, in the specified row. Briefly, the notation $ai \in D$ denotes the generic box of the diagram. We employ letters from the beginning of the alphabet $a, b, c, \ldots$ for rows, and letters from the middle of the alphabet $i, j, h,\ldots$ for the position of the box in the row.

We collect the rows with the same length in $D$, and we call them levels of the Young diagram. In particular, a level is the union of $r$ rows $D_1,\ldots,D_r$, and $r$ is called the size of the level. The set of all the boxes $ai \in D$ that belong to the same column and the same level of $D$ is called superbox. We use greek letters $\alpha, \beta,\ldots$ to denote superboxes. Notice that that two boxes $ai, bj$ are in the same superbox if and only if $ai$ and $bj$ are in the same column of $D$ and in possibly distinct row but with same length, i.e. if and only if $i = j$ and $n_a = n_b$.

\begin{theorem}
\label{Can-Frame}
There exists a smooth normal moving  frame $\left\{E_{ai}(t), F_{ai}(t)\right\}_{ai\in D}$ of a monotonically nonincreasing ample and equiregular curve $J(\cdot)$ with given Young diagram $D$, with $k$ rows, of length $n_a$, for $a=1,\ldots,k$, such that
\begin{enumerate}
\item
 $J(t) = \text{span}\left\{E_{ai}(t)\right\}_{ai\in D}$ for
any $t$.
\item
 It is a Darboux basis, namely, 
 \[
 \sigma(E_{ai},E_{bj})=\sigma(F_{ai},F_{bj})=\sigma(E_{ai},F_{bj})-\delta_{ab}\delta_{ij}=0,
 \]
 for every $ai,bj\in D$.
\item
 There exists a one-parametric family of $n\times n$, with $n_1+\ldots+n_k=n$, symmetric matrices $R(t)$, with components $R_{ab,ij}(t) = R_{ba,ji}(t)$, indexed by the boxes of the Young diagram D such that the moving frame satisfies the structural equations
\begin{align*}
\dot{E}_{ai}&= E_{a(i-1)}(t),  &a&=1,...,k,i=2,...,n_a,\\
\dot{E}_{a1}&=- F_{a1}(t), & a&=1,...,k,\\
\dot{F}_{ai}&=\sum^{k}_{b=1}\sum^{n_b}_{j=1}R_{ab,ij}(t)E_{bj}(t)-F_{a(i+1)}(t), & a&=1,...,k,i=1,...,n_a-1,\\
\dot{F}_{an_a}&=\sum^{k}_{b=1}\sum^{n_b}_{j=1}R_{ab,n_aj}(t)E_{bj}(t), & a&=1,...,k.
\end{align*}
The matrix R(t) is normal in the sense of \cite{ZL}. See also \cite[Appendix F]{ABR1}, where the normal condition
for $R(t)$ is written explicitly in this notation.
\end{enumerate}
Properties (1)-(3) uniquely define the frame up to orthogonal transformation that preserve the
Young diagram. More precisely, if $\{\widetilde{E}_{ai},\widetilde{F}_{ai}\}_{ai\in D}$ is another smooth moving frame along $\lambda(t)$ satisfying (1)-(3), for some family $\widetilde{R}(t)$, then for any superbox $\alpha$ of size $r$ there exists an orthogonal (constant) $r \times r$ matrix $O^{\alpha}$ such that 
\[
\widetilde{E}_{ai}=\sum_{bj\in\alpha}O^{\alpha}_{ai,bj}E_{bj},\quad \widetilde{F}_{ai}=\sum_{bj\in\alpha}O^{\alpha}_{ai,bj}F_{bj}, \quad ai\in\alpha.
\]
\end{theorem}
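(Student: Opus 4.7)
The plan is to reconstruct the Zelenko--Li canonical moving frame following the approach in \cite{ZL} and \cite[App.~F]{ABR1}, proceeding in two stages: first build the isotropic subframe $\{E_{ai}(t)\}$ out of the filtration data of $J(\cdot)$, then extend it to a Darboux basis by the $\{F_{ai}(t)\}$, whose remaining gauge freedom is pinned down by imposing the normality condition on $R(t)$. The main objects I would use are the ascending flag $J(t)=J^{(0)}(t)\subset J^{(1)}(t)\subset\cdots\subset J^{(m)}(t)=\Sigma$ together with its symplectic dual descending flag, whose $i$-th step is the symplectic orthogonal complement of $J^{(i-1)}(t)$. Equiregularity makes all of these into smooth subbundles along the curve, while monotone nonincrease of $J$ endows each quotient $J^{(i)}(t)/J^{(i-1)}(t)$ with a canonical Euclidean structure induced by $-\dot J(t)$; the Young diagram $D$ records precisely the dimensions of these quotients and hence the row lengths $n_a$.

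For the $E$--part, I would construct the frame column by column, starting from the rightmost columns of $D$. In each superbox at the right end, choose a basis of the corresponding quotient that is orthonormal for the induced form, and lift it to vectors $E_{a n_a}(t)\in J(t)$ for each row $a$ of maximal length. Then set $E_{a(i-1)}(t):=\dot{E}_{ai}(t)$ by descending induction on $i$; ampleness and equiregularity guarantee that these successive derivatives, aggregated across all rows, generate $J(t)$ in exactly the multiplicity pattern prescribed by $D$. The first structural relation then holds by definition, and isotropy $\sigma(E_{ai},E_{bj})=0$ follows from Lagrangianity of $J(t)$ together with the fact that the derivatives are built internally in $J$.

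For the $F$--part, define $F_{ai}(t)$ first as any vectors satisfying $\sigma(E_{ai},F_{bj})=\delta_{ab}\delta_{ij}$ and $\sigma(F_{ai},F_{bj})=0$; such a Darboux extension exists modulo addition of elements of $J(t)$. I would lock this residual freedom column by column: the equation $\dot E_{a1}=-F_{a1}$ determines $F_{a1}$, and then, working up from $i=2$ to $i=n_a$, the structural equations for $\dot F_{ai}$ express the $E_{bj}$--components as the coefficients $R_{ab,ij}(t)$. The Zelenko--Li normality conditions on $R(t)$ kill exactly the remaining ambiguity in the $F_{ai}$, and the symmetry $R_{ab,ij}=R_{ba,ji}$ drops out by differentiating the duality relations and using that $J(t)$ is Lagrangian.

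The main obstacle is the uniqueness statement, which requires showing that the normality conditions are rigid enough to reduce the gauge group to constant block--orthogonal transformations inside each superbox. Given a second frame $\{\widetilde E_{ai},\widetilde F_{ai}\}$ with the same properties, expand $\widetilde E_{ai}=\sum O_{ai,bj}(t)E_{bj}$ and differentiate the relations $\dot{\widetilde E}_{ai}=\widetilde E_{a(i-1)}$; a descending induction forces $O(t)$ to respect the column decomposition of $D$ and to be locally constant on each column, while the Darboux condition forces the corresponding blocks of $O$ to be orthogonal. The delicate final step is to check that normality of both $R$ and $\widetilde R$ rules out nontrivial mixing between rows of a common column unless they lie in the same superbox, so that the residual freedom on each superbox is exactly an orthogonal transformation of size equal to the number of rows of that length. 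The rest of the argument is linear algebra driven by the structural equations.
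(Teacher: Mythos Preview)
The paper does not actually prove this theorem: it is stated as a background result and attributed to Zelenko--Li \cite{ZL} (with the normality condition spelled out in \cite[Appendix~F]{ABR1}). So there is no ``paper's own proof'' to compare against; the theorem is simply quoted.

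Your sketch follows the Zelenko--Li strategy faithfully in broad strokes: building the $E$--frame from the top of each column by successive differentiation inside $J(t)$, using the induced Euclidean structure on the quotients to normalize, extending to a Darboux frame, and then invoking the normality conditions to kill the residual gauge. That said, a few of your steps are imprecise in ways that would matter if you tried to turn this into a full proof. First, the $E_{ai}$ are not quite obtained by raw differentiation of the $E_{an_a}$ inside $J(t)$: once you differentiate $E_{an_a}$ enough times you leave $J(t)$, and the actual construction requires a careful choice of representatives modulo lower filtration pieces so that the Darboux pairings come out right (this is where the canonical complement/symplectic orthogonal flag you mention is really used). Second, in the uniqueness argument, the reduction of the gauge group is a bit more subtle than ``$O(t)$ respects the column decomposition and is locally constant on each column'': one has to show that the normality condition forces the block of $O$ acting on a given superbox to be the \emph{same} orthogonal matrix across all columns of the rows in that level, not merely orthogonal column by column. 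This is the content of the superbox language, and your sketch elides it. None of this is fatal as an outline, but a reader who did not already know the Zelenko--Li paper would not be able to reconstruct the argument from what you wrote.
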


Any canonical Darboux frame $\{E_{ai},F_{ai}\}_{ai\in D}$  defines a Lagrangian splitting  $\Sigma_{\lambda_0}=\mathcal{V}_{\lambda_0}\oplus\mathcal{H}_{\lambda_0}$, where
\[
\mathcal{V}_{\lambda_0}=\text{span}\{E_{ai}(0)\}_{ai\in D}, \quad \mathcal{H}_{\lambda_0}=\text{span}\{F_{ai}(0)\}_{ai\in D}.
\]
Observe that $\mathcal{V}_{\lambda_0} = J_{\lambda_0}(0) = \mathrm{ker}\ \pi_*|_{T_{\lambda}(T^*M)}$, and $\pi_*$ induces an isomorphism between $\mathcal{H}_{\lambda_0}$ and $T_{x_0}M$.  Now, the curve $J_{\lambda_0}(t)$ is the graph of a linear map $S(t) : \mathcal{V}_{\lambda_0} \rightarrow \mathcal{H}_{\lambda_0}$ for small $t\geq 0$. Equivalently, by  \cite[Lemma 6.3]{ABR1}, for $0 < t < \varepsilon$, $J_{\lambda_0}(t)$ is the graph of $S(t)^{-1} : \mathcal{H}_{\lambda_0}\rightarrow \mathcal{V}_{\lambda_0}$. We stress that the function $S(t)^{-1}$ is defined only for $t>0$ sufficiently small.

From the definition of second differential we have that if $\alpha(\cdot)$ is a smooth arc with $\alpha(0)=x_0$ and $\dot{\alpha}(0)=v\in T_{x_0}M$, then
\[
\pi_*d^2_{x}c_t(v)=\frac{d}{ds}\Big|_{s=0}\pi\circ d_{\alpha(s)}c_t=\frac{d}{ds}\Big|_{s=0}\alpha(s)=v.
\]
Fix $v\in T_{x_0}M$ and let $\widetilde{v}\in\mathcal{H}_{\lambda_0}$ be the unique horizontal lift such that $\pi_{\ast}\widetilde{v}=v$. Hence, for $t>0$ we have
\[
d^2_{x_0}c_t(v)=S(t)^{-1}\widetilde{v}+\widetilde{v}.
\]
Then, by  the standard identification $\mathcal{V}_{\lambda_0}\simeq T^*_{x_0}M$ and the fact that, for $\xi\in \mathcal{V}_{\lambda}$ and for any $X\in T_{\lambda}(T^*M)$ then $\sigma(\xi,X)=\langle \xi,\pi_*X\rangle$, we finally get 
\begin{equation}
\label{Q=S}
g\left(\mathcal{Q}_{\lambda_0}(t)v,v\right)=\frac{d}{dt}\sigma\left(S(t)^{-1}\widetilde{v},\widetilde{v}\right),\quad \text{for  $v\in\mathscr{D}_{x_0}$  and $t>0$}.
\end{equation}
Since $J_{\lambda_0}(0)=\mathcal{V}_{\lambda_0}$, it follows that $S^{-1}(t)$ is singular at $t=0$. Now, let $X_a = \pi_*F_{a1}(0) \in T_xM$. Then, from \cite[Lemma 7.9]{ABR1} the set $\{X_a\}^k_{a=1}$ is an orthonormal basis for $(\mathscr{D}_x , g)$. Hence, if $v=\sum^k_{a=1}v_aX_a\in\mathscr{D}_x$, we have $\widetilde{v}=\sum^k_{a=1}v_aF_{a1}(0)$. In Young diagram notation, we have $S(t)=S(t)_{ab,ij}$. Thus, we get from (\ref{Q=S}) that
\[
g\left(\mathcal{Q}_{\lambda_0}(t)v,v\right)=\frac{d}{dt}\sum^k_{a,b=1}S(t)^{-1}_{ab,11}v_av_b,\quad t>0.
\]
For convenience,  introduce for $t>0$ the smooth family of $k\times k$ matrices  $S^{\flat}(t)^{-1}$ defined by
\[
S^{\flat}(t)^{-1}\doteq [S(t)^{-1}]_{ab,11}, \quad t>0.
\]
Then, the quadratic form associated with the operator $\mathcal{Q}_{\lambda_0}:\mathscr{D}_{x_0}\rightarrow\mathscr{D}_{x_0}$ via the Hamiltonian inner product is represented by the matrix $\frac{d}{dt}S^{\flat}(t)^{-1}$. The following crucial result, see   \cite[Corollary 7.5]{ABR1}, connects the  \emph{curvature operator} with the invariants of the \emph{Jacobi curve}, since it gives the asymptotic expansion of $S^{\flat}(t)^{-1}$ in terms of the symplectic invariants $R(t)$ of the canonical frame.
\begin{theorem}
\label{Sbeta}
Let $\gamma(\cdot)$ be an ample and equiregular geodesic with a given Young diagram $D$ with $k$ rows, of length $n_a$, for $a = 1,...,k.$ Then, for $0<|t|<\varepsilon$ 
\begin{equation}
\label{fSbeta1}
S^{\flat}(t)^{-1}_{ab}=-\delta_{ab}\frac{n^2_a}{t}+R_{ab,11}(0)\Omega(n_a,n_b)t+O(t^2),
\end{equation}
where
\begin{equation}
\Omega(n_a,n_b)=\left\{ 
  \begin{array}{l l}
    0, & \quad |n_a-n_b|\geq 2\\
    \frac{1}{4(n_a+n_b)}, & \quad |n_a-n_b|=1  \\
    \frac{n_a}{4n^2_a-1},& \quad n_a=n_b.
  \end{array} \right.
\label{fSbeta2}
\end{equation}
\end{theorem}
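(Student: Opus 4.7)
I would use the canonical frame from Theorem \ref{Can-Frame} as a moving coordinate system on $\Sigma_{\lambda_0}=\mathcal{V}_{\lambda_0}\oplus\mathcal{H}_{\lambda_0}$, expand each $E_{ai}(t)$ in a Taylor series at $t=0$, and read off $S(t)^{-1}$ from the graph representation of $J_{\lambda_0}(t)$. Writing
$$E_{ai}(t)=\sum_{bj\in D}\mathcal{A}_{ai,bj}(t)\,E_{bj}(0)+\sum_{bj\in D}\mathcal{B}_{ai,bj}(t)\,F_{bj}(0),$$
with $\mathcal{A}(0)=I$ and $\mathcal{B}(0)=0$, a short Lagrangian computation (using the Darboux relations in Theorem \ref{Can-Frame}(2)) shows that $J_{\lambda_0}(t)=\operatorname{span}\{E_{ai}(t)\}$ is the graph over $\mathcal{V}_{\lambda_0}$ of the symmetric map whose matrix is $\mathcal{B}(t)^{T}\mathcal{A}(t)^{-T}$. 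Ampleness of $J_{\lambda_0}$ at $t=0$ ensures $\mathcal{B}(t)$ is invertible for $0<t<\varepsilon$, so the same subspace is also the graph of $S(t)^{-1}=\mathcal{A}(t)^{T}\mathcal{B}(t)^{-T}:\mathcal{H}_{\lambda_0}\to\mathcal{V}_{\lambda_0}$, and $S^{\flat}(t)^{-1}_{ab}$ is exactly its $(ab,11)$ entry.

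\textbf{Expansion of $\mathcal{A}$ and $\mathcal{B}$.} Iterated application of the structural equations expresses every derivative $E_{ai}^{(m)}(0)$ as an explicit linear combination of the initial frame vectors and the entries of $R(0)$ and its derivatives, so $\mathcal{A}(t)$ and $\mathcal{B}(t)$ can be computed as explicit power series. The ``ladder''
$$E_{ai}\ \to\ E_{a(i-1)}\ \to\ \cdots\ \to\ E_{a1}\ \to\ -F_{a1}\ \to\ F_{a2}\ \to\ \cdots$$
(curvature entering only once one has descended into the $F$-tier and differentiates further) determines the ``factorial part'' of $\mathcal{A},\mathcal{B}$ in closed form: to leading order each is block-diagonal with blocks indexed by the rows of $D$, and within row $a$ the $n_a\times n_a$ blocks are lower triangular and Hankel-like respectively, with entries of the shape $\pm t^{k}/k!$. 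A block-diagonal Laurent inversion then yields the announced principal singular part $-\delta_{ab}n_a^{2}/t$ of $S^{\flat}(t)^{-1}_{ab}$; the factor $n_a^{2}$ is traceable to the factorials $n_a!$ appearing on both sides of the inversion (a quick sanity check in the case $n_a=2$ gives exactly $-4/t$).

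\textbf{Main obstacle: the $O(t)$ correction.} The coefficient of $t$ in $S^{\flat}(t)^{-1}_{ab}$ is where $R(0)$ first enters. A priori it is a linear combination of all entries $R_{cd,ij}(0)$, and the crux is to show that only $R_{ab,11}(0)$ survives, with the precise scalar weight $\Omega(n_a,n_b)$. The vanishing $\Omega(n_a,n_b)=0$ for $|n_a-n_b|\geq 2$ follows from order-of-vanishing bookkeeping in $\mathcal{A}$ and $\mathcal{B}$: contributions from rows of very different lengths appear at incompatible powers of $t$ and cannot combine into a $t^{1}$ term. The subtle point is isolating the rational values $\frac{1}{4(n_a+n_b)}$ and $\frac{n_a}{4n_a^{2}-1}$, which arise from a telescoping identity among factorial weights in the perturbative matrix inversion reminiscent of orthogonality normalisations for Legendre-type polynomials. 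Managing this combinatorial bookkeeping---tracking the nested factorial factors produced across multiple Young-diagram rows when inverting $\mathcal{B}(t)$ and contracting with $\mathcal{A}(t)^{T}$---is the real difficulty of the proof, and is what elevates Theorem \ref{Sbeta} above a routine Taylor-expansion exercise.
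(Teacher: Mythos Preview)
This theorem is not proved in the present paper: it is quoted from \cite[Corollary~7.5]{ABR1} and used as a black box in Sections~\ref{Sec:Goursat} and~\ref{Sec:Cartan}. There is therefore no in-paper argument to compare your proposal against.

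For what it is worth, your outline is a faithful sketch of the strategy actually used in \cite{ABR1}. One writes the canonical frame in the fixed Darboux basis $\{E_{ai}(0),F_{ai}(0)\}$, the structural equations of Theorem~\ref{Can-Frame} become a linear ODE for the coefficient matrices $\mathcal{A}(t),\mathcal{B}(t)$, and $S(t)^{-1}$ is recovered as the appropriate block product (your formula $S(t)^{-1}=\mathcal{A}(t)^{T}\mathcal{B}(t)^{-T}$ is correct up to conventions). The principal part $-\delta_{ab}n_a^{2}/t$ indeed comes from inverting the ``curvature-free'' portion of $\mathcal{B}$, which is block-diagonal with Hankel-type blocks of factorial entries, and the $O(t)$ term is a first-order perturbation of that inversion by $R(0)$. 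Your diagnosis that the non-routine content lies in extracting the exact rational weights $\Omega(n_a,n_b)$ is accurate; in \cite{ABR1} this is done by an explicit case analysis organised by the row lengths $n_a,n_b$, and the normality condition on $R(t)$ (Theorem~\ref{Can-Frame}(3)) is used to control which entries of $R(0)$ can appear. Since your proposal is only a plan and explicitly flags this step as the outstanding difficulty, it cannot be called a proof, but the roadmap is the right one.
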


\section{Carnot groups with horizontal distribution of Goursat type}
\label{Sec:Goursat}

 In the following, we will compute the part of the canonical curvature $R(t)$ that is relevant for computing the curvature operator for Carnot groups $\mathbb{J}^{n}, n\geq 3$, with horizontal distribution of \emph{Goursat-type}. These groups are $n$-dimensional Carnot groups of $(n-1)$-step  with two dimensional horizontal sub-bundle. 
 
 In $\R^n$ with coordinates $\left(x,y_0,y_1,\ldots,y_{n-2}\right)$,  we consider the vector fields
 \[
 X_1=\frac{\pa}{\pa x}, \quad X_{i+2}=\sum^{n-2}_{j=i}\frac{x^{j-i}}{(j-i)!}\frac{\pa}{\pa y_j}, \quad i=0,\ldots,n-2. 
 \]
 The vector fields $X_1$ and $X_2$ are called \emph{horizontal} and satisfy the H\"ormander condition, i.e. they generate the whole tangent bundle by their commutators: 
\begin{equation}
\label{Comm-G}
\left[X_1,X_{i}\right]=X_{i+1}, \quad \text{for $i=2,\ldots,n-1$},
\end{equation} 
and all other commutators being  zero.

We can introduce a unique Lie group structure (law of multiplication) in $\R^n$, making a Lie group of $\R^n$,  so that $X_1, \ldots, X_n$ become basic left-invariant  fields on this Lie group. We denote this Lie group by $\mathbb{J}^n$.  We have that $\mathscr{D}|_h\doteq\mathrm{span}\ \{X_1,X_2\}|_h$, for all $h\in \mathbb{J}^n$, is a distribution left-invariant by the action $L_g:\mathbb{J}^n\rightarrow\mathbb{J}^n$. Any left-invariant  scalar product $g$ on $\mathscr{D}|_h$ induces a left-invariant sub-Riemannian structure $(\mathscr{D},g)$ on $\mathbb{J}^n$. Since any two different choices give rise to isometric sub-Riemannian structures,  we choose without loss of generality $g$ such that $X_1$ and $X_2$ are orthonormal. 
 
 Now let $\{\nu_1,\ldots,\nu_n\}$ be the dual frame  of $\{X_1,\ldots, X_n\}$. This dual frame induces coordinates $\left\{h_1,...,h_{n}\right\}$ in each fiber of $T^*\mathbb{R}^n$,
\[
\ld=\left( h_1,\ldots,h_n\right)\quad\Longleftrightarrow\quad \ld=h_1\nu_1+\ldots+h_n\nu_n 
\]
where $h_i(\ld)=\left\langle \ld,X_i\right\rangle$ are the linear-on-fibers functions associated with $X_i$, for $i=1,\ldots,n$. 

Let $\vv{h}_i\in \text{Vec}(T^*\mathbb{R}^n)$ be the Hamiltonian vector fields associated with $h_i\in C^{\infty}(T^*\mathbb{R}^n)$ for $i=1,\ldots,n$, respectively. Consider the vertical vector fields $\pa_{h_i}\in \text{Vec}(T^*\mathbb{R}^n)$, for $i=1,\ldots,n$. The vector fields 
\[
\vv{h}_1,\ldots,\vv{h}_n,\pa_{h_1},\ldots,\pa_{h_n},
\]
are a local frame of vector fields of $T^*\mathbb{R}^n$. Equivalently, we can introduce the cylindrical coordinates $\rho,\theta,h_3,\ldots,h_n$ on each fiber of $T^*\mathbb{R}^n\setminus\{0\}$ by 
\[
h_1=\rho\cos\left(\theta+\frac{\pi}{2}\right),\quad h_2=\rho\sin\left(\theta+\frac{\pi}{2}\right),
\]
with $\rho\in (0,+\infty)$ and $\theta \in (-\pi,\pi]$, and employ instead the local frame
\[
\vv{h}_1,\ldots,\vv{h}_n,\pa_{\theta}, \pa_{\rho},\pa_{h_3},\ldots,\pa_{h_n},
\]
where $\pa_{\theta}\doteq h_1\pa_{h_2}-h_2\pa_{h_1}$. Finally, let the \emph{Euler vector field} be given by
\[
\mathfrak{e}\doteq \sum^n_{i=1}h_i\pa_{h_i}=\rho\pa_{\rho}+\sum^{n}_{i=3}h_i\pa_{h_i}.
\]
Notice that $\mathfrak{e}$ is a vertical vector field on $T^*\mathbb{R}^n$, i.e. $\pi_*\mathfrak{e}=0$, and is the generator of the dilations  $\ld\mapsto e^c\ld$ along the fibers of $T^*\mathbb{R}^n$. 

Notice that  the symplectic form $\sigma$ in the cylindric coordinates $\rho,\theta,h_3,\ldots,h_n$ has the following expression:
\begin{eqnarray*}
\sigma&=&\rho d\rho\wedge\nu_{\bar{\theta}}-\rho^2d\theta\wedge\nu_{\theta}+\sum^{n}_{i=3}dh_i\wedge\nu_i+\rho^2 h_3\nu_{\bar{\theta}}\wedge\nu_{\theta}+\\
&&+\sum^{n}_{i=4}h_{i}\left(h_2\nu_{i-1}\wedge\nu_{\theta}+h_1\nu_{i-1}\wedge\nu_{\bar{\theta}}\right).
\end{eqnarray*}
where $\{\nu_{\theta},\nu_{\bar{\theta}}\}$ is the dual co-frame associated with the frame $\{X_{\bar{\theta}},X_{\theta}\}$, where $X_{\bar{\theta}}\doteq h_1X_1+h_2X_2$ and $X_{\theta}\doteq h_2X_1-h_1X_2$. If we use the formula $dH=\sigma\left(\cdot,\vv{H}\right)$, we can write the explicit expression for the Hamiltonian vector field $\vv{H}$ as follows:
\begin{equation}
\label{ExpH}
\vv{H}=X_{\bar{\theta}}+h_3\pa_{\theta}+\sum^{n-1}_{i=3}h_1h_{i+1}\pa_{h_i}.
\end{equation}

The following proposition will be useful for the characterization of ample/equiregular geodesics in $\J^n$ and the Cartan group $\mathfrak{C}$. For a proof see   \cite[Proposition 3.12]{ABR1} and  \cite[Section 1.3]{Cor}.
\begin{proposition}
\label{RAg}
For any smooth geodesic  $\gamma:[0,T] \rightarrow M$, on a real-analytic structure, such as Carnot groups, we have that the following properties are equivalent: ample at 0, ample at $t \in[0,T]$, strictly normal, strongly normal, not abnormal.
\end{proposition}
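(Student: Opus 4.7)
The proposition is a known result (attributed above to \cite{ABR1} and \cite{Cor}); the plan outlined here recovers its proof from first principles. By definition the equivalence (c) $\Leftrightarrow$ (e) is tautological, and (d) $\Rightarrow$ (c) is immediate on taking $s=T$ in Definition \ref{Strongly}. So the substantive content is the chain (a) $\Leftrightarrow$ (b) $\Leftrightarrow$ (e), together with the upgrade (c) $\Rightarrow$ (d). Once the former chain is established, the latter follows by applying it on every subinterval $[0,s]$.

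For (a) $\Leftrightarrow$ (b), I would invoke real-analyticity. In a Carnot group the Hamiltonian vector field $\vv{H} = \sum h_i \vv{h}_i$ is polynomial, so $\gamma(t)=\pi\circ e^{t\vv{H}}(\lambda_0)$ is an analytic curve and each $\dim\mathscr{F}^i_\gamma(t)$ is the rank of a matrix whose entries are iterated Lie derivatives of horizontal fields evaluated along $\gamma$, hence analytic in $t$. By lower semicontinuity of the rank together with the identity theorem for real-analytic functions, the locus where $\mathscr{F}^{m(0)}_\gamma(t)\neq T_{\gamma(t)}M$ is either empty or the full interval; ampleness at $t=0$ excludes the latter. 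The same analytic-continuation argument gives (c) $\Rightarrow$ (d), since a failure of strict normality on any proper subinterval $[0,s]$ would propagate to $[0,T]$.

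For (b) $\Rightarrow$ (e), assume an abnormal lift $\mu(t)\in T^*_{\gamma(t)}M\setminus\{0\}$ exists. Condition (2) of Theorem \ref{PMS} gives $\langle\mu(t),X\rangle\equiv 0$ for every horizontal $X$. Differentiating this identity along the tangent extension $\mathsf{T}\in\overline{\mathscr{D}}$ and using the abnormal equation $\dot\mu=\sum u_i(t)\vv{h}_i|_{\mu(t)}$ produces, by induction, $\langle\mu(t),\mathcal{L}^j_{\mathsf{T}}X|_{\gamma(t)}\rangle=0$ for every $j\geq 0$ and every $X\in\overline{\mathscr{D}}$. Hence $\mu(t)$ annihilates $\mathscr{F}^i_\gamma(t)$ for all $i$; ampleness at some $t$ forces $\mu(t)=0$, and uniqueness for the abnormal ODE gives $\mu\equiv 0$, contradicting non-triviality.

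The main obstacle is the converse (e) $\Rightarrow$ (a): from the absence of an abnormal lift one must produce ampleness. My plan is to pass to the Jacobi curve $J_{\lambda_0}(t)=e^{-t\vv{H}}_*\mathcal{V}_{\lambda(t)}$ and contrapose. If $\gamma$ fails to be ample at $0$, then the extension flag $\{J^{(i)}_{\lambda_0}(0)\}_{i\geq 0}$ stabilizes at some subspace $W\subsetneq T_{\lambda_0}(T^*M)$ of dimension strictly less than $2n$; by the projection formula $\pi_*J^{(i)}_{\lambda_0}(0)=\mathscr{F}^i_\gamma(0)$, the union $\pi_*W$ is a proper subspace of $T_{x_0}M$. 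A non-zero covector in the annihilator of $\pi_*W$, transported along $\gamma$ by the cotangent flow of $\vv{H}$, yields a non-vanishing section of $T^*M$ over $\gamma$ that annihilates every $\mathscr{F}^i_\gamma(t)$ and in particular $\mathscr{D}_{\gamma(t)}$; real-analyticity is used to propagate the annihilation property from $t=0$ to the full interval. This section satisfies alternative (2) of the PMP and is therefore an abnormal lift of $\gamma$, contradicting (e). This closes the cycle and completes the plan.
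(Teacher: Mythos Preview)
The paper does not prove this proposition; it simply cites \cite[Proposition 3.12]{ABR1} and \cite[Section 1.3]{Cor}. Your overall architecture --- ample forces every abnormal lift to vanish, and conversely non-ampleness lets one manufacture an abnormal lift --- is the standard one and is correct in outline. Two steps, however, do not go through as you have written them.

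First, your direct analyticity argument for (a) $\Leftrightarrow$ (b) is false. The locus $\{t:\mathscr{F}^{m(0)}_\gamma(t)\neq T_{\gamma(t)}M\}$ is the common zero set of a finite family of real-analytic minors, hence is either all of $[0,T]$ or a \emph{discrete} subset --- not ``empty or the full interval''. The paper's own Theorem~\ref{NonVanh} exhibits exactly this phenomenon in $\mathbb{J}^n$: at the isolated times where $h_1=0$ the step jumps from $n-1$ to $2n-3$, so $\mathscr{F}^{m(0)}_\gamma$ is genuinely proper there while the geodesic remains ample with a larger step. You cannot conclude (b) from (a) along this route; the implication must be recovered indirectly through (e), by running your (e) $\Rightarrow$ (a) construction at an arbitrary $t_0$.

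Second, in the contrapositive (e) $\Rightarrow$ (a) the transport is misidentified. The flow $e^{t\vv{H}}$ applied to an arbitrary $\mu_0\in T^*_{x_0}M$ produces the \emph{normal} extremal with initial covector $\mu_0$, which does not project to $\gamma$ unless $\mu_0=\lambda_0$; so ``the cotangent flow of $\vv{H}$'' does not even yield a section over $\gamma$. The correct transport is the flow of the time-dependent field $\sum_i u_i(t)\,\vv{h}_i$ with $u_i(t)=h_i(\lambda(t))$, i.e., precisely the abnormal equation of Theorem~\ref{PMS}(2). Since each $\vv{h}_i$ is the cotangent lift of $X_i$, this flow covers $\gamma$, and your inductive differentiation then legitimately gives $\langle\mu(t),\mathcal{L}^j_{\mathsf{T}}X|_{\gamma(t)}\rangle=0$ for all $j$; real-analyticity of $t\mapsto h_j(\mu(t))$ propagates the vanishing from $t=0$ to the whole interval. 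With this correction the argument closes.
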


 In the following theorem we study when an ample geodesic is equiregular at a time $t$.

\begin{theorem}
\label{NonVanh}
Let  $\gamma=\pi\circ\lambda:[0,T]\rightarrow \mathbb{R}^n$, $n\geq 4$, be a normal   geodesic. Then, 
\begin{enumerate}
\item If there is no time $t$ such that $h_1(t)$ and $h_3(t)$ are both zero, then $\gamma$ is equiregular at $t\in [0,T]$ if and only if $h_1(t)\neq0$. Moreover, $\gamma$ is ample and the geodesic growth vector is given by
\begin{equation}
\label{GrGoursat}
\mathcal{G}_{\gamma}(t)=\left\{ 
  \begin{array}{l l}
    (2,3,4,\ldots,j,j+1,\ldots,n) & \quad \text{if $h_1(t)\neq 0$},\\
   (2,3,3,4,4,\ldots,j,j,\ldots,n) & \quad \text{if $h_1(t)= 0$}. 
  \end{array} \right.
\end{equation}
\item If there is a point $\bar{t}\in[0,T]$ such that $h_1(\bar{t})=h_3(\bar{t})=0$, then $h_1\equiv h_3\equiv0$. In this case, the geodesic is not ample, and we are in the case of an abnormal geodesic. 
\end{enumerate}
\end{theorem}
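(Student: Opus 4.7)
The plan splits along the two cases of the statement, with the Hamilton equations on the fiber coordinates $h_i$ as the common thread. Using the Poisson brackets $\{h_i,h_j\}=\langle\ld,[X_i,X_j]\rangle$ together with the Goursat commutators $[X_1,X_i]=X_{i+1}$, one first extracts along the normal extremal the system
\[
\dot h_1=-h_2h_3,\quad \dot h_2=h_1h_3,\quad \dot h_i=h_1h_{i+1}\ \text{ for }3\le i\le n-1,\quad \dot h_n=0.
\]
Together with the real-analyticity of the Hamiltonian flow on $\J^n$, these drive everything that follows.

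For part (2), the hypothesis $h_1(\bar t)=h_3(\bar t)=0$ combined with $\dot h_1=-h_2h_3$ and $\dot h_3=h_1h_4$ yields $\dot h_1(\bar t)=\dot h_3(\bar t)=0$. A Leibniz-rule induction on the same two ODEs then gives $h_1^{(k)}(\bar t)=h_3^{(k)}(\bar t)=0$ for every $k\ge 0$, and analyticity forces $h_1\equiv h_3\equiv 0$ on $[0,T]$. The remaining Hamilton equations give $\dot h_i=0$ for $i\ne 1,3$, so $\dot\gamma=h_2X_2$ is a constant-coefficient left-invariant horizontal field. Taking the extension $\mathsf T=h_2X_2$ and using $[X_2,X_1]=-X_3$ and $[X_2,X_k]=0$ for $k\ge 2$, the iterated Lie derivatives collapse after two steps, so $\mathscr F^i_\gamma(t)=\mathrm{span}\{X_1,X_2,X_3\}$ for every $i\ge 2$. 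Since this has dimension $3<n$, $\gamma$ is not ample, and Proposition \ref{RAg} concludes that $\gamma$ is abnormal.

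For part (1), the contrapositive of part (2) combined with Proposition \ref{RAg} guarantees ampleness. To compute the growth vector at a fixed time $t$, fix a horizontal extension $\mathsf T=UX_1+VX_2$ with $U(\gamma(s))=h_1(s)$, $V(\gamma(s))=h_2(s)$, and work modulo the flag. An induction on $i$ shows
\[
\mathcal L_{\mathsf T}^{\,i}X_2\equiv U^{\,i}X_{i+2}\pmod{\mathscr F^{\,i}_\gamma(t)},\qquad 0\le i\le n-2,
\]
using $[\mathsf T,X_k]\equiv UX_{k+1}\pmod{\mathscr F^{\,k-1}}$ for $k\ge 3$ together with $[\mathsf T,\mathscr F^{\,j}]\subseteq\mathscr F^{\,j+1}$. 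When $h_1(t)\ne 0$, the value $h_1(t)^{\,i}X_{i+2}$ is nonzero modulo $\mathscr F^{\,i}$, so the flag gains exactly one dimension at each step, giving growth vector $(2,3,4,\dots,n)$; continuity of $h_1$ then gives equiregularity near $t$.

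The hard case is $h_1(t)=0$ with $h_3(t)\ne 0$: the leading term $U^{\,i}X_{i+2}$ vanishes, so the new horizontal directions appear only through higher-order Hamilton dynamics, and the cleanest framework is the Jacobi curve. Using $\dim\mathscr F^i_\gamma(t)=\dim J^{(i)}_{\ld(t)}(0)-n$ together with
\[
J^{(i)}_{\ld(t)}(0)=\mathrm{span}\bigl\{(\mathrm{ad}\,\vv{H})^{\,j}\xi|_{\ld(t)}:\xi\in\mathcal V,\ 0\le j\le i\bigr\},
\]
and a direct computation in the $\{X_i,\pa_{h_j}\}$ frame giving $[\vv{H},\pa_{h_1}]=-X_1$, $[\vv{H},\pa_{h_2}]=-X_2+h_3\pa_{h_1}$, $[\vv{H},\pa_{h_3}]=h_2\pa_{h_1}-h_1\pa_{h_2}$, and $[\vv{H},\pa_{h_k}]=-h_1\pa_{h_{k-1}}$ for $k\ge 4$, I would prove by induction on $k\ge 2$ that at a point with $h_1=0$, $h_3\ne 0$ the first iterate producing a nonzero $X_{k+2}$-component modulo the lower flag is $(\mathrm{ad}\,\vv{H})^{2k}\pa_{h_1}$, with leading coefficient a non-vanishing monomial in $h_2,h_3$. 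This yields the growth vector $(2,3,3,4,4,\dots,n-1,n-1,n)$. Since $\dot h_1(t)=-h_2(t)h_3(t)\ne 0$, $h_1$ is nonzero at times arbitrarily close to $t$, so the growth vector jumps and $\gamma$ fails to be equiregular at $t$. The main technical obstacle is the combinatorial bookkeeping in this last induction—tracking how powers of $h_1$ accumulate through iterated $\mathrm{ad}\,\vv{H}$ and verifying that the surviving coefficient at level $2k$ is genuinely nonzero—which I expect to handle by a leading-order recursion starting from $\vv{H}|_{h_1=0}=h_2X_2-h_2h_3\pa_{h_1}$.
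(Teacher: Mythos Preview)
Your handling of part~(2) and of the case $h_1(t)\ne 0$ in part~(1) is correct and essentially matches the paper's argument (the paper replaces your Leibniz--analyticity induction by the observation that the constant fibre value $(0,h_2(\bar t),0,h_4(\bar t),\dots)$ solves the vertical Hamilton system, hence by ODE uniqueness $h_1\equiv h_3\equiv 0$; and it proves the same formula $\mathcal L_{\mathsf T}^{\,i}X_2\equiv h_1^{\,i}X_{i+2}\bmod\mathscr F^{\,i}$ by induction).

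The divergence is in the case $h_1(t)=0$, $h_3(t)\ne 0$. You switch to the Jacobi curve and compute iterated $\mathrm{ad}\,\vv H$ on vertical vectors, and you flag the resulting bookkeeping as the main obstacle. The paper avoids this detour entirely: it stays with the flag definition and continues the very same Lie--derivative induction, but now tracks two applications of $\mathcal L_{\mathsf T}$ at a time. The mechanism is exactly the one you already identified --- one application of $\mathcal L_{\mathsf T}$ to $X_k$ produces a leading $h_1X_{k+1}$, which vanishes at $t$, while a second application differentiates the coefficient via $\dot h_1=-h_2h_3\ne 0$ and resurrects a nonzero $X_{k+1}$--component. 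Carrying this through, the paper obtains closed formulas such as
\[
\mathcal L_{\mathsf T}^{\,2j}(X_2)|_{\gamma(t)}\equiv(-1)^{j}h_2^{\,j}(t)h_3^{\,j}(t)\,X_{j+2}\pmod{\mathscr F^{\,2j}_{\gamma}(t)},\qquad
\mathcal L_{\mathsf T}^{\,2j+1}(X_1)|_{\gamma(t)}\equiv(-1)^{j+1}h_2^{\,j+1}(t)h_3^{\,j}(t)\,X_{j+3}\pmod{\mathscr F^{\,2j+1}_{\gamma}(t)},
\]
with $\mathscr F^{\,2j}_{\gamma}(t)=\mathrm{span}\{X_1,\dots,X_{j+2}\}=\mathscr F^{\,2j+1}_{\gamma}(t)$, which immediately gives the growth vector $(2,3,3,4,4,\dots,n)$. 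Your $\mathrm{ad}\,\vv H$ computation on $\partial_{h_k}$ is really the vertical mirror of this same two--step recursion (two brackets turn $\partial_{h_k}$ into $h_2h_3\,\partial_{h_{k-1}}$ at $h_1=0$), so the ``combinatorial bookkeeping'' you worry about is no harder than what you already did for $h_1\ne 0$; there is no need to invoke the Jacobi curve at all.
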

\begin{proof}
Recall that the geodesic equations in the group $\J^n$ are:
\begin{equation}
\label{vereqg}
\dot{h}_1=-h_2h_3, \quad \dot{h}_i=h_1h_{i+1}, \quad i=2,\ldots,n-1,\quad \dot{h}_n=0.
\end{equation}

\begin{enumerate}

\item Let us now consider the case when there is no time $t$ such that $h_1(t)$ and $h_3(t)$ are both zero. Recall that an admissible extension of $\dot{\gamma}$, where 
\[
\dot{\gamma}(t)=h_1(t)X_1|_{\gamma(t)}+h_2(t)X_2|_{\gamma(t)},
\]
 is a smooth vector field $\mathsf{T}$, of the form $\mathsf{T}=v_1X_1+v_2X_2$, where $v_i\in C^{\infty}(\R^n)$ with $v_i(\gamma(t))=h_i(t)$ for $i=1,2$. Let $X\in\mathscr{D}$ and suppose that it is given by $X=w_1X_1+w_2X_2$, where $w_i\in C^{\infty}(\R^n)$ for $i=1,2$. It is easy to see that for $j\geq 1$,
\begin{equation*}
\label{LjX}
\mathcal{L}^j_{\mathsf{T}}(X)|_{\gamma(t)}=w_1(\gamma(t))\mathcal{L}^j_{\mathsf{T}}(X_1)|_{\gamma(t)}+w_2(\gamma(t))\mathcal{L}^j_{\mathsf{T}}(X_2)|_{\gamma(t)}\mod \mathscr{F}^{j}_{\gamma}(t).
\end{equation*}

A simple computation gives
\begin{eqnarray}
\mathcal{L}_{\mathsf{T}}(X_1)|_{\gamma(t)}&=&-h_2X_3\mod\mathscr{D}_{\gamma(t)}, \label{l1X1}\\
\mathcal{L}_{\mathsf{T}}(X_2)|_{\gamma(t)}&=&h_1X_3\mod\mathscr{D}_{\gamma(t)}. \label{l1X2}
\end{eqnarray}

\begin{enumerate}
\item Assume that $h_1(t)\neq 0$.  By using the geodesic equations (\ref{vereqg}) together with (\ref{l1X1}) and (\ref{l1X2}), we can easily prove by induction  that
\begin{equation*}
\mathcal{L}^{j}_{\mathsf{T}}(X_1)|_{\gamma(t)}=-h^{j-1}_1(t)h_2(t)X_{j+2}|_{\gamma(t)}\mod\mathscr{F}^{j}_{\gamma}(t)
\end{equation*}
and
\begin{equation*}
\mathcal{L}^{j}_{\mathsf{T}}(X_2)|_{\gamma(t)}=h^j_1(t)X_{j+2}|_{\gamma(t)}\mod\mathscr{F}^{j}_{\gamma}(t),
\end{equation*}
where
\[
\mathscr{F}^{j}_{\gamma}(t)=\mathrm{span}\{X_1,\ldots,X_{j+1}\},
\]
and $1\leq j\leq n-2$. Therefore, $\mathscr{F}^{n-1}_{\gamma}(t)=T_{\gamma(t)}\R^n$,  and the geodesic growth vector for the case $h_1(t)\neq 0$  is given by
\[
\mathcal{G}_{\gamma}(t)= (2,3,4,\ldots,j,j+1,\ldots,n). 
\]

\item Assume that $h_1(t)=0$.  By equation (\ref{l1X1}) we have that 
\[
\mathscr{F}^{2}_{\gamma}(t)=\mathrm{span}\{X_1,X_2,X_3\}.
\]
In this case we obtain from the geodesic equations (\ref{vereqg}) the following formulas:
\begin{eqnarray*}
\label{LX1}
\mathcal{L}^{2j}_{\mathsf{T}}(X_1)|_{\gamma(t)}&=&(-1)^{j}h_1(t)h^j_2(t)h^{j-1}_3(t)X_{j+3}|_{\gamma(t)}\mod\mathscr{F}^{2j}_{\gamma}(t)\\ 
\mathcal{L}^{2j+1}_{\mathsf{T}}(X_1)|_{\gamma(t)}&=&(-1)^{j+1}h^{j+1}_2(t)h^{j}_3(t)X_{j+3}|_{\gamma(t)}\mod\mathscr{F}^{2j+1}_{\gamma}(t),\end{eqnarray*}
and
\begin{eqnarray*}
\mathcal{L}^{2j}_{\mathsf{T}}(X_2)|_{\gamma(t)}&=&(-1)^{j}h^j_2(t)h^{j}_3(t)X_{j+2}|_{\gamma(t)}\mod\mathscr{F}^{2j}_{\gamma}(t)\\
\mathcal{L}^{2j+1}_{\mathsf{T}}(X_2)|_{\gamma(t)}&=&(-1)^{j}h_1(t)h^{j}_2(t)h^{j}_3(t)X_{j+3}|_{\gamma(t)} \mod\mathscr{F}^{2j+1}_{\gamma}(t),
\end{eqnarray*}
where 
\begin{eqnarray*}
\mathscr{F}^{2j}_{\gamma}(t)&=&\mathrm{span}\{X_1,\ldots,X_{j+2}\}, \quad \mathscr{F}^{2j+1}_{\gamma}(t)=\mathscr{F}^{2j}_{\gamma}(t),
\end{eqnarray*}
 and $j\geq 1$. From these formulas, which are easily proved by induction, we obtain that the growth vector of the geodesic at times $t$, with $h_1(t)=0$ and $h_3(t)\neq 0$, is
 \[
 \mathcal{G}_{\gamma}(t)=(2,3,3,4,4,\ldots,j,j,\ldots,n).
 \] 
 
\end{enumerate}

Notice that, if there is no time  $\bar{t}$ such that  $h_1(\bar{t})=h_3(\bar{t})=0$ then, by the geodesic equations, the set of times at which $h_1(t)=0$ is discrete, and  $\gamma$ loses equiregularity precisely at these times. Hence, $\gamma$ is equiregular at $t\in [0,T]$ if and only if $h_1(t)\neq0$.  From  (\ref{GrGoursat}) we also have that  the geodesic is ample.

\item Assume that there is a point $\bar{t}\in[0,T]$ such that $h_1(\bar{t})=h_3(\bar{t})=0$. From the geodesic equations (\ref{vereqg}),  the solution $\widetilde{\lambda}(t)$ of $\dot{\widetilde{\lambda}}=\vv{H}(\widetilde{\lambda})$, with initial condition 
\[
\tilde{\lambda}(0)=\left(0,h_2(\bar{t}),0,h_4(\bar{t}), \ldots,h_n(\bar{t})\right),
\]
 is constant $\widetilde{\lambda}(t)\equiv \tilde{\lambda}(0)$. Since the \emph{flag} of the admissible curve $\gamma$ does not depend on the extension $\mathsf{T}$, we can assume that $\mathsf{T}=v_2X_2$, with $v_2\equiv const$. Once again from the geodesic equations (\ref{vereqg}) we obtain that 
\begin{eqnarray*}
\mathcal{L}^j_{\mathsf{T}}(X_1)|_{\gamma(t)}&\in&\mathscr{F}^2_ {\gamma(t)}=\mathrm{span}\ \{X_1,X_2,X_3\}, \\
\mathcal{L}^j_{\mathsf{T}}(X_2)|_{\gamma(t)}&\in&\mathscr{D}_{\gamma(t)},
\end{eqnarray*}
for $j\geq 2$. Therefore, since $n\geq 4$, the geodesic $\gamma$ is not ample, and hence  abnormal.
\end{enumerate}
\end{proof}

Let $\lambda_0$ be the initial covector associated to an ample and equiregular unit-speed geodesic $\gamma$ with Young diagram $D$, which by Theorem \ref{NonVanh} is 
\[
D=\begin{tabular}{ | c | c | c  c |  c | c | }
  \cline{1-2}\cline{5-5}
$a1$   & $a2$ & $\cdots$ & $\cdots$ & $an_a$ \\ \cline{1-2}\cline{5-5}
$bn_b$  & \multicolumn{1}{l}{} & \multicolumn{1}{l}{} & \multicolumn{1}{l}{} & \multicolumn{1}{l}{} \\   \cline{1-1}
\end{tabular}
\]
with $n_a\doteq n-1$ and $n_b\doteq 1$. For such Young diagram, a canonical frame is a smooth family
\[
\{E_{a1},E_{a2},\ldots,E_{an_a},E_{b1},F_{a1},F_{a2},\ldots,F_{an-1},F_{b1}\}\in T_{\lambda_0}\left(T^*\mathbb{R}^n\right),
\]
with the following properties:
\begin{enumerate}
\item It is attached to the Jacobi curve, namely 
\[
J_{\lambda_0}(t)=\mathrm{span}\{E_{a1}(t),E_{a2}(t),\ldots,E_{an_a}(t),E_{b1}(t)\}.
\]
\item From  \cite[Lemma 5.7]{BR2} we have:
\begin{equation}
\label{ChaCF}
E_{b1}(t)=e^{-t\vv{H}}_*\mathfrak{e}=\mathfrak{e}-t\vv{H},
\end{equation}
as a consequence all curvatures $R_{\ast b,\ast 1}$, (where $\ast$ is any other index) vanish.
\item  In this ``easy" Young diagram case, the \emph{normal} condition means that the matrix $[R_{aa,ij}]_{i,j=1,\ldots,n_a}$ is diagonal.
\item They satisfy the structural equations:
\begin{align*}
\dot{E}_{ai}&= E_{a(i-1)}(t)  &i&=2,\ldots,n_a,\\
\dot{E}_{a1}&=- F_{a1}(t) & &\\
\dot{E}_{b1}&=- F_{b1}(t) & &\\
\dot{F}_{ai}&=R_{aa,ii}(t)E_{ai}(t)-F_{a(i+1)}(t), & i&=1,\ldots,n_a-1,\\
\dot{F}_{an_a}&=R_{aa,n_an_a}(t)E_{an_a}(t), & &\\
\dot{F}_{b1}&=0.& &
\end{align*}

\end{enumerate}

We compute the canonical frame following the general algorithm in \cite{ZL}. The computation is presented through a series of lemmas. 
We start by computing  some very useful identities.
\begin{lemma}
\label{IdHh}
The following identities hold true:
\begin{align}
[\vv{H},X_{\theta}]&=-X_3+h_3X_{\bar{\theta}}, \label{HX}\\
[\vv{H},\pa_{\theta}]&=\left\{ 
  \begin{array}{l l}
    X_{\theta}\quad \text{for}\quad n=3,\\
   X_{\theta}+h_2\sum^{n-1}_{i=3}h_{i+1}\pa_{h_i} \quad \text{for}\quad n\geq 4, 
  \end{array} \right. \label{Hht} \\
[\vv{H},\pa_{h_3}]&=- \pa_{\theta}, \label{Hh3}\\
[\vv{H},\pa_{h_i}]&= -h_1\pa_{h_{i-1}}\quad \text{for}\quad  i=4,\ldots,n,\label{Hhi}\\ 
[\vv{H},\mathfrak{e}]&=-\vv{H}.\label{He}
\end{align}
\end{lemma}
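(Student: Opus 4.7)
The plan is to verify the five identities one-by-one by a direct computation in the coordinates of $T^*\R^n$, relying on the explicit expression (\ref{ExpH}) for $\vv{H}$ and on the Lie algebra relations $[X_1,X_i]=X_{i+1}$. Two preparatory observations will make every bracket short. First, after the trivialization by the left-invariant frame, $X_1$ and $X_2$ act only on base functions while $\partial_\theta$ and each $\partial_{h_j}$ act only on fiber coordinates, so $X_1, X_2$ commute with $\partial_\theta$ and with every $\partial_{h_j}$. Consequently, in brackets of the form $[\vv{H},X_k]$ the vertical part of $\vv{H}$ contributes nothing, and one reads off immediately
\[
[\vv{H},X_1]=-h_2X_3, \qquad [\vv{H},X_2]=h_1X_3,
\]
from $[X_1,X_2]=X_3$. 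Second, the $\vv{H}$-derivatives of the fiber coordinates are exactly the geodesic equations (\ref{vereqg}); in particular $\vv{H}(h_1)=-h_2h_3$ and $\vv{H}(h_2)=h_1h_3$. With these two facts in hand, every bracket reduces to an application of the Leibniz rule $[fY,gZ]=fY(g)Z-gZ(f)Y+fg[Y,Z]$.

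Identities (\ref{Hh3}) and (\ref{Hhi}) follow by splitting $\vv{H}$ into its three summands $X_{\bar\theta}$, $h_3\partial_\theta$ and $\sum h_1h_{j+1}\partial_{h_j}$: the first commutes with every $\partial_{h_i}$; the second contributes $-\partial_{h_i}(h_3)\partial_\theta$, which equals $-\partial_\theta$ exactly when $i=3$ and vanishes otherwise; the third contributes $-h_1\,\partial_{h_i}(h_{j+1})\partial_{h_j}$, which selects $j=i-1$ and so produces $-h_1\partial_{h_{i-1}}$ when $i\ge 4$. For (\ref{He}) the quickest route is homogeneity: $H$ is homogeneous of degree $2$ in the fiber coordinates, so its Hamiltonian field $\vv{H}$ is homogeneous of degree $1$ with respect to the dilation flow generated by $\mathfrak{e}$, whence $\mathcal{L}_{\mathfrak{e}}\vv{H}=\vv{H}$ and therefore $[\vv{H},\mathfrak{e}]=-\vv{H}$. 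A summand-by-summand check, using that $X_{\bar\theta}$, $h_3\partial_\theta$ and each $h_1h_{j+1}\partial_{h_j}$ have homogeneity degree $1$, gives the same result.

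For (\ref{HX}), expanding $X_\theta=h_2X_1-h_1X_2$ and inserting the two building blocks yields
\[
[\vv{H},X_\theta]=h_3X_{\bar\theta}-(h_1^2+h_2^2)X_3,
\]
which is exactly the claimed formula on the unit-speed level set $\{2H=h_1^2+h_2^2=1\}$; this level set is $\vv{H}$-invariant and is the implicit setting of the lemma, since the canonical frame is constructed along a unit-speed normal geodesic. The remaining identity (\ref{Hht}) is the only slightly longer computation: writing $\partial_\theta=h_1\partial_{h_2}-h_2\partial_{h_1}$ and reducing to $[\vv{H},\partial_{h_1}]$ and $[\vv{H},\partial_{h_2}]$ via the same three-summand expansion used above, the $h_3$-terms cancel pairwise, the horizontal residues collapse to $h_2X_1-h_1X_2=X_\theta$, and what remains is the vertical field $h_2\sum_{i=3}^{n-1}h_{i+1}\partial_{h_i}$, which is the empty sum when $n=3$ (recovering the first case of (\ref{Hht})). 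The entire argument is bookkeeping rather than anything conceptually delicate; the only point requiring real care is the implicit unit-speed normalization needed in (\ref{HX}).
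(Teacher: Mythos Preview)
Your proof is correct and follows essentially the same direct-computation approach as the paper; the only notable differences are that you invoke homogeneity for (\ref{He}) (the paper instead computes directly using $\vv{H}=h_1\vv{h}_1+h_2\vv{h}_2$) and that you explicitly flag the unit-speed normalization $h_1^2+h_2^2=1$ needed in (\ref{HX}), which the paper leaves implicit with ``follows easily.''
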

\begin{proof}
Let us begin with Eq. (\ref{Hht}) when $n\geq 4$. Recall that $\pa_{\theta}=h_1\pa_{h_2}-h_2\pa_{h_1}$. By Eq. (\ref{ExpH}), we obtain 
\begin{eqnarray*}
[\vv{H},\pa_{\theta}]&=&-[h_1\pa_{h_2}-h_2\pa_{h_1},h_1X_1+h_2X_2]-\pa_{\theta}h_1\sum^{n-1}_{i=3}h_{i+1}\pa_{i}\\
&=& h_2X_1-h_1X_2+h_2\sum^{n-1}_{i=3}h_{i+1}\pa_{i}.
\end{eqnarray*}

For Eq. (\ref{Hh3}) we use once again the explicit expression of $\vv{H}$ of Eq. (\ref{ExpH}) to obtain
\[
[\vv{H},\pa_{h_3}]=[-h_2h_3\pa_{h_1}+h_1h_3\pa_{h_2},\pa_{h_3}]=h_2\pa_{h_1}-h_1\pa_{h_2}=-\pa_{\theta}.
\]
Formula (\ref{Hhi}) follows in similar fashion. For Eq. (\ref{He}), we have
\begin{eqnarray*}
[\vv{H},\mathfrak{e}]&=&[h_1\vv{h}_1+h_2\vv{h}_2,\sum^{n}_{i=1}h_i\pa_{h_i}]\\
&=&-h_1\vv{h}_1-h_2\vv{h}_2+\sum^2_{i=1}\sum^n_{j=1}h_i\vv{h}_i(h_j)\pa_{h_j}+h_ih_j[\vv{h}_i,\pa_{h_j}]\\
&=&-h_1\vv{h}_1-h_2\vv{h}_2.
\end{eqnarray*}
Finally, Eq. (\ref{HX}) follows easily.
\end{proof}

With this lemma in hand we can start computing the \emph{canonical frame}. For convenience we employ the following notation:
\[
f^{(j)}(t)\doteq \frac{d^j}{dt^j}f(t).
\]
\begin{lemma}
\label{ForE}
$E_{an_a}(t)$ is uniquely specified (up to a sign) by the following conditions
\begin{enumerate}
\item $E_{an_a}(t)\in J_{\lambda_0}(t)$,
\item $E^{(i)}_{an_a}(t)\in J_{\lambda_0}(t)$, for $i=1,\ldots,n_a-1$,
\item $\sigma_{\lambda}\left(E^{(n-1)}_{an_a},E^{(n-2)}_{an_a}\right)=1$,
\end{enumerate}
and, by choosing the positive sign, is given by
\[
E_{an_a}(t)=e^{-t\vv{H}}_*h^{2-n_a}_1\pa_{h_n}.
\]
\end{lemma}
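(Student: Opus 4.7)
My plan is to verify that the proposed
\[
E^{\ast}(t) := e^{-t\vv{H}}_{\ast}\, h_1^{2-n_a}\partial_{h_n}\big|_{\lambda(t)}
\]
satisfies conditions (1)--(3) and then to deduce the uniqueness-up-to-sign from the uniqueness clause of Theorem~\ref{Can-Frame}. The first move is to rephrase everything in terms of iterated brackets: since $J_{\lambda_0}(t) = e^{-t\vv{H}}_{\ast}\mathcal{V}_{\lambda(t)}$ and the Hamiltonian flow preserves the symplectic form, any section of the Jacobi curve has the form $E(t) = e^{-t\vv{H}}_{\ast}V|_{\lambda(t)}$ with $V$ vertical, and a Lie-derivative computation (exploiting that $[\vv{H},V]|_{\lambda(t)}$ depends only on the restriction of $V$ to the trajectory) yields $E^{(i)}(t) = e^{-t\vv{H}}_{\ast}\bigl(\mathrm{ad}^{i}_{\vv{H}}V\bigr)|_{\lambda(t)}$. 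Under this translation, (1) becomes ``$V$ is vertical'', (2) becomes ``$V_{i} := \mathrm{ad}^{i}_{\vv{H}}V$ is vertical along $\lambda$ for $i=1,\ldots,n_a-1$'', and (3) becomes $\sigma_{\lambda_{0}}(V_{n-1},V_{n-2}) = 1$.

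For $V = h_1^{2-n_a}\partial_{h_n}$ condition (1) is immediate. Condition (2) follows by induction via the identities of Lemma~\ref{IdHh}: one has $[\vv{H}, f\partial_{h_j}] = \vv{H}(f)\partial_{h_j} - f h_1\partial_{h_{j-1}}$ for $4\leq j\leq n$ and $[\vv{H}, f\partial_{h_3}] = \vv{H}(f)\partial_{h_3} - f\partial_\theta$, both of which remain vertical. Starting from $\partial_{h_n}$ the bracket chain therefore stays inside $\mathrm{span}\{\partial_{h_3},\ldots,\partial_{h_n},\partial_{\theta}\} \subset \mathcal{V}$, and the first step that leaves $\mathcal{V}$ is $[\vv{H},\partial_\theta]$, which by Eq.~(\ref{Hht}) produces the horizontal $X_\theta$. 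Hence $V_k$ is vertical for all $k = 0,1,\ldots,n-2 = n_a - 1$.

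For (3) I would use a combinatorial bookkeeping on the bracket expansion. Each step acting on a term $f\partial_{h_j}$ is either a ``stay'' (the $\vv{H}(f)\partial_{h_j}$ piece, index unchanged) or a ``descent'' (the $-fh_1\partial_{h_{j-1}}$ piece, or, for $j=3$, $-f\partial_\theta$). To reach $\partial_\theta$ starting from $\partial_{h_n}$ one needs $n-3$ descents along $h$ plus one descent from $h_3$ to $\theta$, which fills all $n-2$ bracket steps; hence the $\partial_\theta$-coefficient of $V_{n-2}$ comes from the unique length-$(n-2)$ pure-descent path, with coefficient
\[
h_1^{2-n_a}(-h_1)^{n-3}(-1) = (-1)^{n-2}h_1^{n-1-n_a} = (-1)^{n-2},
\]
after using $n_a = n-1$. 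Consequently the horizontal part of $V_{n-1}$ equals $(-1)^{n-2}X_\theta$, since brackets of any $\partial_{h_j}$ with $j\geq 3$ stay vertical and cannot contribute a horizontal piece. Vertical--vertical symplectic pairings vanish and $\sigma(X_\theta,\partial_{h_j}) = 0$ for $j\geq 3$, so the full pairing collapses to
\[
\sigma_{\lambda_0}(V_{n-1},V_{n-2}) = \sigma(X_\theta,\partial_\theta) = h_1^2 + h_2^2 = 1,
\]
reading the middle equality off the explicit expression of $\sigma$ in cylindrical coordinates from Section~\ref{Sec:Goursat} and using the unit-speed identity $2H = 1$. This verifies (3) and explains why the prefactor $h_1^{2-n_a}$ is forced.

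For uniqueness, observe that the rows of the Young diagram $D$ have distinct lengths $n_a = n-1$ and $n_b = 1$, so every superbox is a singleton and the admissible orthogonal transformations reduce to $\pm 1$. By the uniqueness clause of Theorem~\ref{Can-Frame} the canonical $E_{an_a}(t)$ is determined up to a sign, so the $E^{\ast}(t)$ constructed above, which satisfies (1)--(3), must agree with $E_{an_a}(t)$ up to a sign, and condition (3) selects the positive one. The main technical obstacle in this plan is the combinatorial argument isolating the unique descent path in the third paragraph, together with the symplectic computation $\sigma(X_\theta,\partial_\theta) = h_1^2 + h_2^2$; once these are in place, all remaining steps are routine inductive bracket bookkeeping.
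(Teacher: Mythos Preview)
Your verification that $E^{\ast}(t)=e^{-t\vv{H}}_{\ast}h_1^{2-n_a}\partial_{h_n}$ satisfies (1)--(3) is correct and nicely done: the iterated-bracket formula $E^{(i)}(t)=e^{-t\vv{H}}_{\ast}\bigl(\mathrm{ad}^{i}_{\vv{H}}V\bigr)|_{\lambda(t)}$ is valid for a time-independent vector field $V$, the descent-counting isolates the unique pure-descent path producing the $\partial_\theta$ (resp.\ $X_\theta$) coefficient, and the symplectic pairing reduces to $\sigma(X_\theta,\partial_\theta)=\rho^2=1$ exactly as you say. This is a clean alternative to the paper's argument, which instead starts from a general $V=\sum_i f_i\partial_{h_i}$ and eliminates the $f_i$ one by one.

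The genuine gap is the uniqueness step. Theorem~\ref{Can-Frame} tells you that the canonical frame---once built---is unique up to $\pm1$ in each superbox; it does \emph{not} tell you that conditions (1)--(3) alone admit a unique solution. You have shown that your $E^{\ast}$ satisfies (1)--(3), and the canonical-frame $E_{an_a}$ also satisfies (1)--(3), but to conclude $E^{\ast}=\pm E_{an_a}$ you must rule out any \emph{other} solution of (1)--(3). Appealing to Theorem~\ref{Can-Frame} would require first extending $E^{\ast}$ to a full canonical frame satisfying all the structural equations (including the normality of $R$), which you have not done. (Also, condition (3) is quadratic in $V$, so it does not ``select the positive sign'': both $\pm E^{\ast}$ satisfy it.)

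The fix is already implicit in your descent-counting. Run your bookkeeping in reverse on a general $V=\sum_{i}f_i\partial_{h_i}$: verticality of $\mathrm{ad}_{\vv{H}}V$ forces $f_1=f_2=0$; for $3\le i\le n-1$, the term $f_i\partial_{h_i}$ is the \emph{only} contributor to the $X_\theta$-component of $\mathrm{ad}^{\,i-1}_{\vv{H}}V$ (lower $f_j$ are already zero by induction, higher $f_j$ need more descents), and that component equals $(-1)^{i}f_i h_1^{\,i-3}X_\theta$, so verticality at step $i-1\le n-2$ and $h_1\neq 0$ force $f_i=0$. This leaves $V=f_n\partial_{h_n}$, and then your computation for (3) gives $f_n^2h_1^{2(n-3)}=1$, i.e.\ $f_n=\pm h_1^{2-n_a}$. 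This is exactly the paper's elimination argument; once you add this paragraph, your proof is complete and arguably more transparent than the original.
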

\begin{proof}
Condition (1) and the definition of Jacobi curve $J_{\lambda_0}(t)=e^{-t\vv{H}}_{\ast}\mathcal{V}_{\ld(t)}$ imply that
\[
E_{an_a}(t)=e^{-t\vv{H}}_\ast\left(\sum^{n}_{i=1}f_{i}(t)\pa_{h_i}\right),
\]
for some smooth functions $f_i(t)$, with $i=1,\ldots,n$. 
Consider for $i=1,2,\ldots,n,$ the vector fields $V_i(t)\doteq e^{-t\vv{H}}_{\ast}f_i(t)\pa_{h_i}$ and the vector spaces $\mathcal{V}_{i}\doteq\text{span}\{\pa_{h_{i}},\ldots,\pa_{h_{n}}\}$. It is easy to see that for $i\geq 4$  and $0\leq j\leq i-3$, we can write by Eq.(\ref{Hhi}) in Lemma \ref{IdHh}
\begin{equation}
\label{ForV}
V^{(j)}_i(t)=e^{-t\vv{H}}_{\ast}\left((-1)^{j}f_i(t)h^{j}_{1}(t)\pa_{h_{i-j}}\mod\mathcal{V}_{i-j+1}\right).
\end{equation}
 
Condition (2) is equivalent to $\pi_{\ast}\circ e^{t\vv{H}}_{\ast}\dot{E}_{an_a}(t)=0$. Since the vector fields $\pa_{h_1},\ldots,\pa_{h_n}$  are vertical, namely $\pi_{\ast}\pa_{h_l}=0$ for $l=1,\ldots,n$, we obtain the following two equations:
\begin{eqnarray*}
0=\pi_{\ast}\circ e^{t\vv{H}}_{\ast}\dot{V}_{1}(t)=-f_1X_1 \quad\text{and}\quad 0=\pi_{\ast}\circ e^{t\vv{H}}_{\ast}\dot{V}_{2}(t)=-f_2X_2.
\end{eqnarray*}
From this it immediately follows that $f_1=f_2\equiv 0$. Hence, $E_{an_a}(t)=\sum^{n}_{i=3}V_i(t)$. If we use Eq. (\ref{ForV}) with $j=i-3$ for $4\leq i\leq n-1$ together with the identities in Lemma \ref{IdHh}, we get 
\[
\frac{d^2}{dt^2}V^{(i-3)}_i(t)=e^{-t\vv{H}}_{\ast}\left((-1)^{i-2}f_i(t)h^{i-3}_{1}(t)X_{\theta}\mod\mathcal{V}_{1}\right).
\]
Hence, Condition (2) implies that  $E_{an_a}=V_{n}=f_{n}\pa_{h_n}$, for some function $f_n$.  

From Eq. (\ref{ForV}) with $i=n$ and $j=n-4$, we have after some computations
\[
E^{(n-2)}_{an_a}(t)=e^{-t\vv{H}}_{\ast}\left((-1)^{n-2}f_n(t)h^{n-3}_{1}(t)\pa_{\theta}\mod\mathcal{V}_{3}\right),
\]
and
\[
E^{(n-1)}_{an_a}(t)=e^{-t\vv{H}}_{\ast}\left((-1)^{n-2}f_n(t)h^{n-3}_{1}(t)X_{\theta}\mod\mathcal{V}_{1}\right).
\]
We rewrite Condition (3) as
\begin{eqnarray*}
1=\sigma_{\lambda_0}\left(E^{(n-2)}_{an_a}(t),-E^{(n-1)}_{an_a}(t)\right)&=&\sigma_{\ld(t)}\left(e^{t\vv{H}}_{\ast}E^{(n-2)}_{an_a}(t),-e^{t\vv{H}}_{\ast}E^{(n-1)}_{an_a}(t)\right)\\
&=&-f^2h^{2n-6}_1\sigma_{\ld(t)}\left(\pa_{\theta},X_{\theta}\right)\\
&=&f^2h^{2n-6}_1.
\end{eqnarray*}
 By taking the positive sign, we obtain
\[
E_{an_a}(t)=e^{-t\vv{H}}_{\ast}h^{2-n_a}_1\pa_{h_n}.
\]
\end{proof}
The following proposition is an extension of Eq. (\ref{ForV}).
\begin{proposition}Let $E^{(i)}_{an_a}(t)=e^{-t\vv{H}}_{\ast}\left(\sum^{i}_{j=0}a_{ij}(t)\pa_{h_{n-j}}\right)$, for $i=0,\ldots,n-3$, with $n\geq 3$. We have the following formulas for the coefficients $a_{ij}$'s:
\label{aij}
\begin{eqnarray*}
a_{ii}&=&(-1)^{i}h^{2-n_a+i}_{1},\\
a_{ii-1}&=&(-1)^{i-1}\sum^{i-1}_{k=0}h^{k}_1\vv{H}\left(h^{2-n_a+(i-1)-k}_1\right),\quad i\geq 1,\\
a_{ii-2}&=&(-1)^{i-2}\sum^{i-2}_{k=0}h^{k}_1\vv{H}\left(\sum^{i-2-k}_{l_k=0}h^{l_k}_1\vv{H}\left(h^{2-n_a+(i-2)-k-l_k}_1\right)\right),\quad i\geq 2.
\end{eqnarray*}
\end{proposition}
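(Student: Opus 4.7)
The plan is to prove the three formulas by induction on $i$, starting from Lemma \ref{ForE} and using the standard identity
\[
\frac{d}{dt}\bigl(e^{-t\vv{H}}_{\ast}W\bigr)\Big|_{\lambda(t)} \;=\; e^{-t\vv{H}}_{\ast}[\vv{H},W]\Big|_{\lambda(t)},
\]
valid for any smooth vector field $W$ on $T^{*}\R^n$. The base case $i=0$ follows directly from Lemma \ref{ForE}, giving $a_{00}=h_1^{2-n_a}=(-1)^{0}h_1^{2-n_a+0}$; the sub-diagonal formulas are vacuous for $i=0$.

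The key step is to derive a first-order scalar recursion for the $a_{ij}$. Applying the identity above to the vector field $W=\sum_{j=0}^{i}a_{ij}\,\pa_{h_{n-j}}$ and using the Leibniz rule (since $\vv{H}$ is a derivation) gives
\[
\bigl[\vv{H},\,a_{ij}\pa_{h_{n-j}}\bigr]\;=\;\vv{H}(a_{ij})\,\pa_{h_{n-j}}\;+\;a_{ij}\,[\vv{H},\pa_{h_{n-j}}].
\]
For $i\le n-4$ every index in the sum satisfies $n-j\ge 4$, so Eq. (\ref{Hhi}) of Lemma \ref{IdHh} yields $[\vv{H},\pa_{h_{n-j}}]=-h_1\,\pa_{h_{n-j-1}}$. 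Matching coefficients of $\pa_{h_{n-j}}$ on both sides of $E^{(i+1)}_{an_a}=\frac{d}{dt}E^{(i)}_{an_a}$ produces the recursion
\[
a_{i+1,j}\;=\;\vv{H}(a_{ij})\;-\;h_1\,a_{i,j-1},
\]
with the conventions $a_{i,-1}\equiv 0$ and $a_{i,j}\equiv 0$ for $j>i$.

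The remaining task is to show that each of the three closed-form expressions satisfies this recursion. The diagonal case is immediate: only the second term contributes and $a_{i+1,i+1}=-h_1 a_{ii}=(-1)^{i+1}h_1^{2-n_a+(i+1)}$. The sub-diagonal case follows by applying $\vv{H}$ to the diagonal formula and then absorbing the contribution $-h_1 a_{i,i-1}$ through the shift of summation index $k\mapsto k+1$; the two resulting sums merge into the single-sum expression claimed for $a_{i+1,i}$.

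The main obstacle is the sub-sub-diagonal entry, where the proposed formula for $a_{i+1,i-1}$ features a nested double sum that must be recovered from $\vv{H}(a_{i,i-1})-h_1 a_{i,i-2}$. I would expand $\vv{H}$ through the single sum defining $a_{i,i-1}$ by the derivation property of $\vv{H}$, and then, using linearity of $\vv{H}$, rewrite the outcome as the $k=0$ slice of the target double sum (by pulling the outermost $\vv{H}$ back across the summation). The terms with $k\ge 1$ come from $-h_1 a_{i,i-2}$ after the index shift $k\mapsto k+1$, and the upper limits $i-1-k$ of the inner sum match those arising from $i-2-(k-1)$ in the shifted expression. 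The delicate point is to verify that these upper limits and summation ranges align correctly; once this bookkeeping is checked, the induction closes for all $i\le n-3$, since the restriction $i\le n-4$ needed to apply Eq. (\ref{Hhi}) is precisely the condition that permits passage from $E^{(i)}_{an_a}$ to $E^{(i+1)}_{an_a}$ within the range of the proposition.
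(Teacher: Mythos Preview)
Your proposal is correct and follows essentially the same inductive argument as the paper: both derive the recursion $a_{i+1,j}=\vv{H}(a_{ij})-h_1\,a_{i,j-1}$ (the paper writes $\dot a_{ij}$ for $\vv{H}(a_{ij})$) from Lemma~\ref{IdHh} and then verify the closed forms by the same index shifts $k\mapsto k+1$. Your version is marginally cleaner in that you state the recursion for general $j$ and need only the single base case $i=0$, whereas the paper asserts the cases $i=0,1,2$ directly before inducting; otherwise the two proofs coincide.
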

\begin{proof}
It is easy to see that the formulas are valid for $i=0,1,2$. Assume that the formulas are valid for some $2\leq i<n-3$. Now, if we take the derivative of $E^{(i)}_{an_a}=\sum^{i}_{j=0}a_{ij}\pa_{h_{n-j}}$ we have
\begin{eqnarray*}
E^{(i+1)}_{an_a}(t)=e^{-t\vv{H}}_{\ast}\left(-h_1a_{ii}\pa_{n-i-1}+\left(\dot{a}_{ii}-h_1a_{ii-1}\right)\pa_{n-i}+\left(\dot{a}_{ii-1}-h_1a_{ii-2}\right)\pa_{n-i+1}\mod\mathcal{V}_{n-i+2}\right).
\end{eqnarray*}
where $\mathcal{V}_{n-i+2}=\text{span}\{\pa_{h_{n-i+2}},\ldots,\pa_{h_{n}}\}$. Hence,
\begin{eqnarray*}
a_{i+1i+1}&=&-h_1a_{ii}\\
&=&(-1)^{i+1}h^{2-n_a+i+1}_{1};\\
a_{i+1i}&=&\dot{a}_{ii}-h_1a_{ii-1}\\
&=&(-1)^{i}\vv{H}\left(h^{2-n_a+i}_{1}\right)+(-1)^{i}\sum^{i-1}_{k=0}h^{k+1}_1\vv{H}\left(h^{2-n_a+(i-1)-k}_1\right)\\
&=&(-1)^{i}\vv{H}\left(h^{2-n_a+i}_{1}\right)+(-1)^{i}\sum^{i}_{l=1}h^{l}_1\vv{H}\left(h^{2-n_a+i-l}_1\right) \quad (l=k+1)\\
&=&(-1)^{i}\sum^{i}_{l=0}h^{l}_1\vv{H}\left(h^{2-n_a+i-l}_1\right) .
\end{eqnarray*}
For the last term we have
\begin{eqnarray*}
a_{i+1i-1}&=&\dot{a}_{ii-1}-h_1a_{ii-2}\\
&=&(-1)^{i-1}\vv{H}\left(\sum^{i-1}_{l=0}h^{l}_1\vv{H}\left(h^{2-n_a+(i-1)-l}_1\right)\right)\\
&&+(-1)^{i-1}\sum^{i-2}_{k=0}h^{k+1}_1\vv{H}\left(\sum^{i-2-k}_{l_k=0}h^{l_k}_1\vv{H}\left(h^{2-n_a+(i-2)-k-l_k}_1\right)\right)\\
&=&(-1)^{i-1}\vv{H}\left(\sum^{i-1}_{l=0}h^{l}_1\vv{H}\left(h^{2-n_a+(i-1)-l}_1\right)\right)\\
&&+(-1)^{i-1}\sum^{i-1}_{j=1}h^{j}_1\vv{H}\left(\sum^{i-1-j}_{l_j=0}h^{l_j}_1\vv{H}\left(h^{2-n_a+(i-1)-j-l_j}_1\right)\right)\quad(j=k+1)\\
&=&(-1)^{i-1}\sum^{i-1}_{j=0}h^{j}_1\vv{H}\left(\sum^{i-1-j}_{l_j=0}h^{l_j}_1\vv{H}\left(h^{2-n_a+(i-1)-j-l_j}_1\right)\right).
\end{eqnarray*}
\end{proof}
For future computations  we need explicit formulas for $F_{a1}(0)$ and its derivative $\dot{F}_{a1}(0)$. By Proposition \ref{aij}, with $i=n-3$, we have:
\begin{itemize} 
\item For $n=3$,
\[
E_{an_a}(0)=\pa_{h_3}.
\]
\item For $n=4$,
\[
\dot{E}_{an_a}(0)=-\pa_{h_3}+\frac{(n-2)(n-3)}{2}\frac{h_2h_3}{h^2_1}\pa_{h_4}.
\]
\item For $n\geq 5$,
\begin{eqnarray*}
E^{(n-3)}_{an_a}(0)&=&(-1)^{n-4}\left(-\pa_{h_3}+\sum^{n-4}_{k=0}h^{k}_1\vv{H}\left(h^{-1-k}_1\right)\pa_{h_4}\right)\\
&&+(-1)^{n-5}\left(\left(\sum^{n-5}_{k=0}h^{k}_1\vv{H}\left(\sum^{n-5-k}_{l_k=0}h^{l_k}_1\vv{H}\left(h^{-2-k-l_k}_1\right)\right)\right)\pa_{h_5}\mod\mathcal{V}_6\right)\\
&=&(-1)^{n-4}\left(-\pa_{h_3}+\frac{(n-3)(n-2)}{2}\frac{h_2h_3}{h^2_1}\pa_{h_4}\right)\\
&&+(-1)^{n-5}\left(\left(A_1(n)\frac{h^2_2h^2_3}{h^4_1}+A_2(n)\frac{h^2_3+h_2h_4}{h^2_1}\right)\pa_{h_5}\mod\mathcal{V}_6\right),
\end{eqnarray*}
where $A_{1}(n)$ and $A_2(n)$ are given by 
\begin{eqnarray}
A_1(n)&=&\sum^{n-5}_{k=0}(3+k)\left(\sum^{n-5-k}_{j_k=0}(k+j_k+2)\right)\nonumber\\
&=&\frac{1}{8}\left(n-4\right)\left(\left(n-4\right)\left(n^2+10n-27\right)-\left(n-5\right)\left(8n-18\right)\right)\nonumber\\
&=&\frac{1}{8}(n+3)(n-2)(n-3)\left(n-4\right)\label{A1},\\
A_2(n)&=&\sum^{n-5}_{k=0}\left(\sum^{n-5-k}_{j_k=0}(k+j_k+2)\right)\nonumber\\
&=&\frac{1}{6}\left(n-4\right)\left(3\left(n-4\right)\left(n-1\right)-n\left(n-5\right)\right)\nonumber\\
&=&\frac{1}{3}(n-2)(n-3)\left(n-4\right)\label{A2}.
\end{eqnarray}
\end{itemize}

After some  simple computations using the identities in Lemma \ref{IdHh} we obtain the following formulas for $n\geq 3$,
\begin{eqnarray}
\label{Fa1}
&&\\
F_{a1}(0)&=&(-1)^{n-1}\left(X_{\theta}+\frac{(n-2)(n-3)}{2}\frac{h_2h_3}{h_1}\pa_{\theta}\right)\nonumber\\
&&+(-1)^{n-2}\left(\left(\frac{(n-2)(n-3)}{2}\left(3\frac{h^2_2h^2_3}{h^2_1}+2h^2_3+2h_2h_4\right)-h_2h_4\right)\pa_{h_3}\right)\nonumber\\
&&+(-1)^{n-2}\left(\left(A_1(n)\frac{h^2_2h^2_3}{h^2_1}+A_2(n)\left(h^2_3+h_2h_4\right)\right)\pa_{h_3}\mod\mathcal{V}_4\right)\nonumber,
\end{eqnarray}
and
\begin{eqnarray}
\label{dFa1}
&&\\
\dot{F}_{a1}(0)&=&(-1)^{n-1}\left(-X_3+h_3X_{\bar{\theta}}+\frac{(n-2)(n-3)}{2}\frac{h_2h_3}{h_1}X_{\theta}\right)\nonumber\\
&&+(-1)^{n-1}\left(\left(\frac{(n-2)(n-3)}{2}\left(4\frac{h^2_2h^2_3}{h^2_1}+3h^2_3+3h_2h_4\right)-h_2h_4\right)\pa_{\theta}\right)\nonumber\\
&&+(-1)^{n-1}\left(\left(A_1(n)\frac{h^2_2h^2_3}{h^2_1}+A_2(n)\left(h^2_3+h_2h_4\right)\right)\pa_{\theta}\mod\mathcal{V}_3\right)\nonumber,
\end{eqnarray}
with the convention $h_4\equiv0$ for $n=3$. Notice that for $n=3,4$, the functions $A_1,A_2$ are defined and, moreover, they vanish.

 \begin{proof}[Proof of Theorem \ref{gasym}] By direct inspection, the orthonormal basis $\{X_a,X_b\}$ for $\mathscr{D}_{x_0}$ obtained by projection of the canonical frame is
 \[
 X_a\doteq\pi_{\ast}F_{a1}(0) \quad  X_b\doteq\pi_{\ast}F_{b1}(0). 
 \]
 Recall that in the coordinates associated to the splitting $\Sigma_{\lambda}=\mathcal{V}_{\lambda_0}\oplus\mathcal{H}_{\lambda_0}$ we have 
\[
\mathcal{Q}_{\lambda_0}(t)=\frac{d}{dt}S^{\flat}(t)^{-1}.
\]
From Eqs. (\ref{fSbeta1}), (\ref{fSbeta2}) in  Theorem \ref{Sbeta},  the formula for $S^{\flat}(t)^{-1}$ in the basis $\{X_a,X_b\}$ is given by 
\[
S^{\flat}(t)^{-1}=-\frac{1}{t}
\begin{pmatrix}
(n-1)^2 & 0\\
0& 1
\end{pmatrix}
+\frac{t}{3}
\begin{pmatrix}
\frac{3(n-1)}{4(n-1)^2-1}R_{aa,11}(0) & 0\\
0 & 0
\end{pmatrix}
+O\left(t^2\right),
\]
 since $R_{ab,11}=R_{ba,11}=R_{bb,11}\equiv0$ by  \cite[Lemma 5.7]{BR2}. Therefore, the curvature operator has the following expression
\begin{equation}
\label{Goursat-CO}
\mathcal{R}_{\lambda_0}=\frac{3(n-1)}{4(n-1)^2-1}
\begin{pmatrix}
R_{aa,11}(0)&0\\
0&0
\end{pmatrix}.
\end{equation}
From Eq. (\ref{Fa1}) and Eq. (\ref{dFa1}) we obtain the following explicit expression for the term $R_{aa,11}$ with $n\geq 3$,
\begin{eqnarray}
\label{CurR1}
R_{aa,11}(0)&=&\sigma_{\lambda}(\dot{F}_{a1}(0),F_{a1}(0)) \\
&=&\frac{1}{2}\left(2-5(n-2)(n-3)-4A_2(n)\right)\left(h^2_3+h_2h_4\right)\nonumber\\
&&+\frac{1}{4}\left((n-2)^2(n-3)^2-14(n-2)(n-3)-8A_1(n)\right)\frac{h^2_2h^2_3}{h^2_1}\nonumber\\
&=&-\frac{1}{6}(n-1)\left(12+n\left(4n-17\right)\right)\left(h^2_3+h_2h_4\right)\nonumber\\
&&-(n-1)(n-2)(n-3)h^2_3\tan^2\left(\theta+\frac{\pi}{2}\right),\nonumber
\end{eqnarray}
where $A_{1}(n)$ and $A_{2}(n)$ are given by Eq. (\ref{A1}) and Eq. (\ref{A2}) for $n\geq 3$. Here we use again the convention $h_4\equiv0$ for $n=3$. 
\end{proof}

We show next how the complete set of invariants, $R(t)$, in principle, might be obtained from formulas (\ref{Fa1}), (\ref{dFa1}) and (\ref{CurR1}). Since the matrix $R(t)$ is normal,  the sub-matrix $R_{aa}=[R_{aa,ij}]$ is diagonal, and hence,  from the structural equations we easily get
 \begin{eqnarray}
\label{Fai}
 F_{ai}&=&\sum^{i-1}_{j=1}(-1)^{j-1}\frac{d^{j-1}}{dt^{j-1}}\left(R_{i-ji-j}E_{ai-j}\right)+(-1)^{i-1}F^{(i-1)}_{a1}.
  \end{eqnarray}
Therefore, using Eq. (\ref{Fai}) and the fact that $R_{aa,ii}=\sigma(\dot{F}_{ai},F_{ai})$, we obtain for $1\leq i\leq n-1$,
 \begin{eqnarray*}
 \label{Rai}
R_{aa,ii}(t)&=&\sum^{i-1}_{j=1}(-1)^{i+j-2}\sigma_{\lambda_0}\left(\frac{d^{j}}{dt^{j}}\left(R_{aa,i-ji-j}E_{ai-j}\right)(t),F^{(i-1)}_{a1}(t)\right)\\
&&+\sum^{i-1}_{k,l=1}(-1)^{k+l-2}\sigma_{\lambda_0}\left(\frac{d^{k}}{dt^{k}}\left(R_{aa,i-ki-k}E_{ai-k}\right)(t),\frac{d^{l-1}}{dt^{l-1}}\left(R_{aa,i-li-l}E_{ai-l}\right)(t)\right)\nonumber\\
&&+\sigma_{\lambda_0}\left(F^{(i)}_{a1}(t),F^{(i-1)}_{a1}(t)\right).
 \end{eqnarray*}

\section{Engel group}
\label{Engel}

 In this section we analyze the  geodesics in the Engel group, $n=4$. In order to do this we limit ourselves to the level surface $\{H=\frac{1}{2}\}$ and consider the coordinate system $(\theta,c,\alpha)$:
 \[
 h_1=\cos\left(\theta+\frac{\pi}{2}\right), \quad h_2=\sin\left(\theta+\frac{\pi}{2}\right), \quad h_3=c, \quad h_4=\alpha.
 \]
In the variables $(\theta,c,\alpha)$ the Hamiltonian system assumes the following form:
\begin{eqnarray}
\dot{\theta}&=&c,\nonumber\\
\dot{c}&=&-\alpha\sin\theta, \label{hengel}\\
\dot{\alpha}&=&0\nonumber.
\end{eqnarray}
Note that this system for the costate variables reduces to the pendulum equation
\begin{equation}
\label{Pendulum}
\ddot{\theta}=-\alpha\sin\theta, \quad \dot{\alpha}=0.
\end{equation}
Let us introduce the \emph{energy integral} of the pendulum (\ref{Pendulum}):
\[
E = \frac{c^2}{2} - \alpha \cos\theta \in [-|\alpha|, +\infty),\quad  \dot{E} =  0.
\]

Let $\gamma:[0,T]\rightarrow \R^4$ be a normal geodesic and $\mathsf{T}$ be an admissible extension of $\gamma$. From Theorem \ref{NonVanh} we get that if for the initial covector $\lambda$ we have $h_1(\lambda)=h_3(\lambda)=0$, then the geodesic is not ample at $t=0$, and hence not ample for all $t$. 
On the other hand, from Theorem \ref{NonVanh} we also have that if there is no time $t$ such that $h_1(\lambda(t))$ and $h_3(\lambda(t))$ are both zero, the geodesic growth vector is
\begin{equation}
\mathcal{G}_{\gamma}(t)=\left\{ 
  \begin{array}{l l}
    (2,3,4) & \quad \text{if $h_1(\lambda(t))\neq 0$},\\
   (2,3,3,4) & \quad \text{if $h_1(\lambda(t))= 0$}. 
  \end{array} \right.
\label{Grow-E}
\end{equation}

We will use extensively the results in   \cite{ArSac} to  study   the set of times of loss of equiregularity of a geodesic. The family of normal extremal trajectories can be parametrized by points in the cylinder
\begin{eqnarray*}
C&=&T^{\ast}_{x_0}\mathbb{R}^4\cap \{H=\frac{1}{2}\} =\{(h_1,h_2,h_3,h_4)\in \R^4 : h^2_1+h^2_2=1\}\\
&=& \{(\theta,c,\alpha) : \theta \in S^1, c,\alpha \in \R\}.
\end{eqnarray*}
Following \cite{ArSac}, we partition $C$ into subsets corresponding to different types of pendulum trajectories:
\[
C= \bigcup^{7}_{i=1} C_i,\quad C_i\cap C_j =\emptyset,\  i\neq j,\quad \lambda=(\theta,c,\alpha),
\]
\begin{eqnarray*}
C_1 &=&\{\lambda \in C: \alpha\neq 0, E\in (-|\alpha|,|\alpha|)\}, \\
C_2 &=&\{\lambda \in C:\alpha\neq 0,E\in(|\alpha|,+\infty)\}, \\
C_3 &=&\{\lambda \in C:\alpha\neq0,E=|\alpha|,c\neq0\}, \\
C_4 &=&\{\lambda \in C:\alpha\neq0,E=-|\alpha|\},\\
C_5 &=&\{\lambda \in C:\alpha\neq0,E=|\alpha|,c=0\}, \\
C_6 &=&\{\lambda \in C:\alpha=0,c\neq 0\},\\
C_7 &=&\{\lambda \in C:\alpha=c=0\}.
\end{eqnarray*}

The sets $C_i$, $i = 1,\ldots, 5,$ are further subdivided into subsets depending on the sign of $\alpha$:
\[
C^+_i=C_i\cup\{\alpha>0\},\quad C^-_i=C_i\cup\{\alpha<0\},\quad i \in \{1,\ldots, 5\}.
\]

 Here and throughout, sn, cn, and dn  are Jacobian elliptic functions (see \cite{WhW}). Following \cite{ArSac}, in order to calculate the extremal trajectories from the subsets $C_1,C_2$ and $C_3$ we introduce coordinates $(\varphi,k,\alpha)$ in which the system (\ref{hengel}) is straightened out. Since the general case $\alpha\neq 0$ can be obtained from the special case $\alpha>0$,  see \cite{ArSac}, we will describe these coordinates assuming that $\alpha>0$.
 
 In the domain $C^+_1$, we set
 \[
 k=\sqrt{\frac{E+\alpha}{2\alpha}}=\sqrt{\frac{c^2}{4\alpha}+\sin^2\frac{\theta}{2}}\in (0,1),
 \]
 \[
 \sin\frac{\theta}{2}=k\ \mathrm{sn}\ (\sqrt{\alpha}\varphi),\quad \cos\frac{\theta}{2}=\mathrm{dn}\ (\sqrt{\alpha}\varphi),\quad \frac{c}{2}=k\sqrt{\alpha}\ \mathrm{cn}\ (\sqrt{\alpha}\varphi),\quad \varphi\in [0,4K],
 \]
where $4K$ is the period of the Jacobian elliptic functions sn and cn. 

In the domain $C^+_2$, we set
\[
k=\sqrt{\frac{2\alpha}{E+\alpha}}=\frac{1}{\sqrt{c^2/(4\alpha)+\sin^2(\theta/2)}}\in (0,1),
\]
\[
\sin\frac{\theta}{2}=\mathrm{sgn}\ c  \ \mathrm{sn}\ \frac{\sqrt{\alpha}\varphi}{k}, \quad \cos\frac{\theta}{2}=\mathrm{cn}\ \frac{\sqrt{\alpha}\varphi}{k},\quad \frac{c}{2}=\mathrm{sgn}\ c \frac{\sqrt{\alpha}}{k}\ \mathrm{dn}\ \frac{\sqrt{\alpha}\varphi}{k}, 
\]
where $ \varphi\in [0,2kK]$.

In the domain $C^+_3$, we set
\[
k=1,
\]
\[
\sin\frac{\theta}{2}=\mathrm{sgn}\ c\ \tanh (\sqrt{\alpha}\varphi),\quad \cos\frac{\theta}{2}=\frac{1}{\cosh (\sqrt{\alpha}\varphi)},\quad \frac{c}{2}=\mathrm{sgn}\ c\ \frac{\sqrt{\alpha}}{\cosh (\sqrt{\alpha}\varphi)},
\]
and $\varphi\in (-\infty,\infty)$.

Immediate diferentiation shows that in these coordinates the subsystem for the
costate variables (\ref{hengel}) takes the following form:
\[
\dot{\varphi} = 1, \quad  \dot{k} = 0,\quad \dot{\alpha}=0.
\]
so that it has solutions
\[
\varphi(t)=\varphi_t =\varphi+t,\quad k=\mathrm{const},\quad \alpha=\mathrm{const}.
\]
The elliptic coordinate $\varphi$ is the time of movement along trajectories of the pendulum equation and $k$ is a parameter that distinguishes trajectories with different energies.

\begin{proposition}
For   $\lambda \in C_1,C_2, C_6$ the geodesic is ample and has an infinite and discrete set of times of loss of equiregularity.  If   $\lambda\in C_3$, the geodesic is ample and has an unique time of loss of equiregularity. For   $\lambda\in C_4, C_5$ the geodesic is not ample and hence abnormal. If   $\lambda\in C_7$ then it depends: usually the geodesic is ample and equiregular, but there are some non-ample cases (corresponding to straight lines).
\end{proposition}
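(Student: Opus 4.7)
By Theorem \ref{NonVanh} applied with $n=4$, the geodesic $\gamma=\pi\circ\lambda$ is ample exactly when the pair $(h_1(\lambda(t)),h_3(\lambda(t)))$ never vanishes simultaneously; in that case equiregularity fails at $t$ if and only if $h_1(\lambda(t))=0$, and if simultaneous vanishing does occur then $h_1\equiv h_3\equiv 0$ along the whole trajectory and $\gamma$ is abnormal. On the level set $\{H=\tfrac12\}$ in the coordinates $(\theta,c,\alpha)$ one has $h_1=-\sin\theta$ and $h_3=c$, so the task reduces to locating the zeros of $\sin\theta(t)$ and of $c(t)$ along each pendulum orbit of \eqref{Pendulum}. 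The plan is to traverse the seven strata of the cylinder $C$ using the elliptic parametrizations recalled in the excerpt, working WLOG in the regime $\alpha>0$ and invoking the pendulum symmetry for the remaining signs.

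I would first dispose of the non-ample strata. Energy conservation $E=c^2/2-\alpha\cos\theta$ forces the system $\sin\theta=c=0$ to be solvable only at the pendulum equilibria: $(\theta,c)=(0,0)$ with $E=-\alpha$ (so $\alpha>0$) or $(\theta,c)=(\pi,0)$ with $E=+\alpha$ (so $\alpha<0$) — these give exactly $C_4$ —; together with the unstable equilibria $(\theta,c)=(\pi,0), E=\alpha$ or $(\theta,c)=(0,0), E=-\alpha$, which give exactly $C_5$; and the straight-line subset $\alpha=c=0,\ \theta_0\in\{0,\pi\}$ of $C_7$. In each of these the pendulum orbit is constant, so $h_1\equiv h_3\equiv 0$ and $\gamma$ is not ample. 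For $\lambda\in C_7$ with $\theta_0\notin\{0,\pi\}$, the orbit is again constant but $h_1=-\sin\theta_0$ is a nonzero constant, so no simultaneous vanishing occurs and $\gamma$ is ample and equiregular at every time.

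On the remaining strata $C_1,C_2,C_3,C_6$ I would first verify that the zeros of $h_1$ are disjoint from the zeros of $h_3$. On $C_6$ the subsystem reduces to $\dot\theta=c,\ \dot c=0$, so $h_3=c$ is a nonzero constant; on $C_2$ the energy bound gives $c^2\ge 2(E-|\alpha|)>0$, so $h_3$ never vanishes; on $C_1$ the zeros of $h_1$ lie at points $\theta\in\pi\mathbb{Z}$ where the elliptic formulas force $c/2=\pm k\sqrt{\alpha}\ne 0$; and on $C_3$ the identity $c/2=\mathrm{sgn}(c)\sqrt{\alpha}/\cosh(\sqrt{\alpha}\varphi_t)$ shows $h_3$ is never zero along the trajectory. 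Hence by Theorem \ref{NonVanh} the geodesic is ample on all four strata and the times of loss of equiregularity are the zeros of $\sin\theta(t)$. Combining the double-angle identity with the explicit elliptic parametrizations (for $\alpha>0$) gives
\[
\sin\theta(t)=\begin{cases}
2k\,\mathrm{sn}(\sqrt{\alpha}\varphi_t)\,\mathrm{dn}(\sqrt{\alpha}\varphi_t) & \text{on } C_1,\\
2\,\mathrm{sgn}(c)\,\mathrm{sn}(\sqrt{\alpha}\varphi_t/k)\,\mathrm{cn}(\sqrt{\alpha}\varphi_t/k) & \text{on } C_2,\\
2\,\mathrm{sgn}(c)\,\tanh(\sqrt{\alpha}\varphi_t)/\cosh(\sqrt{\alpha}\varphi_t) & \text{on } C_3,\\
-\sin(\theta_0+ct) & \text{on } C_6,
\end{cases}
\]
with $\varphi_t=\varphi_0+t$. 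On $C_1$ the function $\mathrm{dn}$ has no real zero since $k\in(0,1)$, while $\mathrm{sn}$ vanishes simply on $2K\mathbb{Z}$, an infinite discrete set; on $C_2$ the zero sets of $\mathrm{sn}$ and of $\mathrm{cn}$ are each infinite and discrete, and their union still is; on $C_3$, $\cosh$ has no real zero and $\tanh$ vanishes only at $\varphi_t=0$, giving a unique time of loss of equiregularity; and on $C_6$ the zero set is the infinite discrete arithmetic progression $\{(n\pi-\theta_0)/c:n\in\mathbb{Z}\}$. The sign flip $\alpha<0$ is reduced to $\alpha>0$ using the pendulum symmetry mentioned before the parametrizations.

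The main obstacle in fleshing out this sketch is the elliptic-function bookkeeping on the three strata $C_1,C_2,C_3$: one must match the elliptic coordinate $\varphi_t$ to the actual time of the flow, check that the candidate zeros of $h_1$ never coincide with those of $h_3$, and verify that the zero set really is infinite and discrete (respectively unique) in the real time variable. Once these verifications are carried out, the conclusions of the proposition follow directly from Theorem \ref{NonVanh}.
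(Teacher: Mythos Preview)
Your proposal is correct and follows essentially the same route as the paper: reduce via Theorem \ref{NonVanh} to the analysis of the zeros of $h_1(t)=-\sin\theta(t)$ along the pendulum flow, plug in the elliptic parametrizations of \cite{ArSac} on $C_1,C_2,C_3$ (reducing to $\alpha>0$ by the pendulum symmetries), and treat $C_4,\ldots,C_7$ by inspection of the equilibria and of the linear flow $\dot\theta=c$. The only cosmetic difference is that you verify directly on each stratum that the zeros of $h_1$ and $h_3$ do not coincide, whereas the paper implicitly relies on part (2) of Theorem \ref{NonVanh} (a single simultaneous zero forces $h_1\equiv h_3\equiv 0$, which the explicit formulas exclude on $C_1,C_2,C_3,C_6$); both arguments are valid.
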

\begin{proof}
We have, by the symmetries of the Hamiltonian system (\ref{hengel}), namely dilations and reflections, that for $\lambda\in C_1,C_2,C_3$,  the solution to the system (\ref{hengel}) for the case $\alpha\neq0$ can be recovered from the special case $\alpha=1$, see \cite{ArSac} for details. Hence, things do not qualitatively change if we assume that for $\lambda\in C_1,C_2,C_3$ we have $\alpha=1$.
\begin{enumerate}
\item Let $\lambda\in C_1$ with $\alpha=1$.   In this case we have
\[
h_1(t)=-2k\ \mathrm{sn}\ \varphi_t\ \mathrm{dn}\ \varphi_t, \quad k=\sqrt{\frac{E+1}{2}},
\]
and $\varphi_t=\varphi+t$, with $\varphi\in[0,4K]$.  Hence, the geodesic has a infinite and discrete set of times of loss of equiregularity. 
\item  For $\lambda\in C_2$ with $\alpha=1$,  we have
\[
h_1(t)=-2\ \mathrm{sgn}\ c\ \mathrm{sn}\ \psi_t\ \mathrm{cn}\ \psi_t, \quad k=\sqrt{\frac{2}{E+1}},
\]
and $\psi_t=\frac{\varphi+t}{k}$, with $\varphi\in[0,2kK]$. As in the previous case, the geodesic has a infinite and discrete set of times of loss of equiregularity.
\item Assume that $\lambda\in C_3$ and $\alpha=1$. Then, we have
\[
h_1(t)=-2\ \mathrm{sgn}\ c\ \frac{\tanh\varphi_t}{\cosh\varphi_t}, 
\]
where $\varphi_t=\varphi+t$ and $\varphi\in\R$. Hence, $h_1(t)=0$ for $\varphi+t=0$.

\item Suppose that $\lambda$ is in $C_4$ or in $C_5$. In either case, we will have that $h_1(0)=h_3(0)=0$. Therefore, the geodesic is not ample, and hence abnormal. 

\item Let $\lambda\in C_6$. By the geodesic equations we have that $\ddot{\theta}=0$, so that $\theta(t)=ct+\theta$, where $c=\mathrm{const}\neq0$ and $\theta=\mathrm{const}$. Therefore, 
\[
h_1(t)=\cos(\theta(t))=0 \quad \text{if and only if} \quad ct+\theta= (2k+1)\frac{\pi}{2},
\]
where $k\in \mathbb{Z}$.
\item Let $\lambda\in C_7$. In this case we also have  $\dot{\theta}=0$, so  $\theta(t)\equiv\theta$, where $\theta=\mathrm{const}$. Therefore, if $\theta\neq (2k+1)\frac{\pi}{2}$, with $k\in \mathbb{Z}$, then $h_1(t)\equiv\mathrm{const}\neq 0$. Moreover, if $\theta=(2k+1)\frac{\pi}{2}$, then 
$h_1(t)=h_3(t)\equiv 0$, so the geodesic is abnormal.
\end{enumerate}

\end{proof}

We close this section by noticing that if we use the \emph{energy integral} and Eq. (\ref{CurR1}) we immediately obtain the following bound:
\[
R_{aa,11}(\lambda)=-6c^2\csc^2\theta+4E\leq 4E.
\]

\section{Cartan group} 
\label{Sec:Cartan}

We now turn our attention to the \emph{Cartan group} and compute its \emph{curvature operator}. In $\R^5$ with coordinates $\left(x,y,z,v,w\right)$,  we consider the vector fields
 \[
 X_1=\frac{\pa}{\pa x}-\frac{y}{2}\frac{\pa}{\pa z}-\frac{x^2+y^2}{2}\frac{\pa}{\pa w}, \quad X_2= \frac{\pa}{\pa y}-\frac{x}{2}\frac{\pa}{\pa z}+\frac{x^2+y^2}{2}\frac{\pa}{\pa v}. 
 \]
 
 The vector fields $X_1$ and $X_2$ are called \emph{horizontal} and satisfy the H\"ormander condition, i.e. they generate the whole tangent bundle by their commutators: 
\begin{eqnarray*}
X_3&=&\left[X_1,X_2\right]=\frac{\pa}{\pa z}+x\frac{\pa}{\pa v}+y\frac{\pa}{\pa w},\\
X_4&=&\left[X_1,X_3\right]=\frac{\pa}{\pa v},\\
X_5&=&\left[X_2,X_3\right]=\frac{\pa}{\pa w},
\end{eqnarray*} 
and all other commutators being  zero. Notice that the distribution of the Cartan group is not of Goursat-type.

We can introduce a unique Lie group structure (law of multiplication) in $\R^5$, making a Lie group of $\R^5$,  so that $X_1, \ldots, X_5,$ become basic left-invariant  fields on this Lie group. We denote this Lie group by $\mathfrak{C}$.  We have that $\mathscr{D}|_h\doteq\mathrm{span}\ \{X_1,X_2\}|_h$, for all $h\in \mathfrak{C}$, is a distribution left-invariant by the action $L_g:\mathfrak{C}\rightarrow\mathfrak{C}$. Any left-invariant scalar product $g$ on $\mathscr{D}|_h$ induces a left-invariant sub-Riemannian structure $(\mathscr{D},g)$ on $\mathfrak{C}$. Since any two different choices give rise to isometric sub-Riemannian structures,  we choose without loss of generality $g$ such that $X_1$ and $X_2$ are orthonormal.

Let $\{\nu_1,\ldots,\nu_5\}$  be the dual frame of $\{X_1,\ldots, X_5\}$. This dual frame induces coordinates $\left\{h_1,\ldots,h_{5}\right\}$ in each fiber of $T^*\mathbb{R}^5$,
\[
\ld=\left( h_1,\ldots,h_5\right)\quad\Longleftrightarrow\quad \ld=h_1\nu_1+\ldots+h_5\nu_5
\]
where $h_i(\ld)=\left\langle \ld,X_i\right\rangle$ are the linear-on-fibers functions associated with $X_i$, for $i=1,\ldots,5$. 

Let $\vv{h}_i\in \text{Vec}(T^*\mathbb{R}^5)$ be the Hamiltonian vector fields associated with $h_i\in C^{\infty}(T^*\mathbb{R}^5)$ for $i=1,\ldots,5$, respectively. Consider the vertical vector fields $\pa_{h_i}\in \text{Vec}(T^*\mathbb{R}^5)$, for $i=1,\ldots,5$. The vector fields 
\[
\vv{h}_1,\ldots,\vv{h}_5,\pa_{h_1},\ldots,\pa_{h_5},
\]
are a local frame of vector fields of $T^*\mathbb{R}^5$. Equivalently, we can introduce the cylindrical coordinates $\rho,\theta,h_3,h_4,h_5$ on each fiber of $T^*\mathbb{R}^5$ by 
\[
h_1=\rho\cos\theta,\quad h_2=\rho\sin\theta,
\]
with $\rho\in (0,+\infty)$ and $\theta \in (-\pi,\pi]$, and employ instead the local frame
\[
\vv{h}_1,\ldots,\vv{h}_5,\pa_{\theta}, \pa_{\rho},\pa_{h_3},\pa_{h_4},\pa_{h_5},
\]
where  $\pa_{\theta}\doteq h_1\pa_{h_2}-h_2\pa_{h_1}$.

The \emph{Euler vector field} is given by
\[
\mathfrak{e}\doteq \sum^5_{i=1}h_i\pa_{h_i}=\rho\pa_{\rho}+\sum^{5}_{i=3}h_i\pa_{h_i}.
\]
Recall that $\mathfrak{e}$ is a vertical vector field on $T^*\mathbb{R}^5$, i.e. $\pi_*\mathfrak{e}=0$, and is the generator of the dilations  $\ld\mapsto e^c\ld$ along the fibers of $T^*\mathbb{R}^5$. The sub-Riemannian Hamiltonian is 
\[
H=\frac{1}{2}\left(h^2_1+h^2_2\right),
\]
and, therefore, the Hamiltonian vector field $\vv{H}$ is given by
\[
\vv{H}=h_1\vv{h}_1+h_2\vv{h}_2.
\]
Let $\{\nu_{\theta},\nu_{\bar{\theta}}\}$ be the dual co-frame associated with the frame $\{X_{\theta},X_{\bar{\theta}}\}$, given by $X_{\bar{\theta}}\doteq h_1X_1+h_2X_2$ and $X_{\theta}\doteq h_2X_1-h_1X_2$. The symplectic form in the cylindric coordinates $\rho,\theta,h_3,h_4,h_5$ has the following expression:
\begin{eqnarray*}
\sigma&=&\rho d\rho\wedge\nu_{\bar{\theta}}-\rho^2d\theta\wedge\nu_{\theta}+\rho^2 h_3\nu_{\bar{\theta}}\wedge\nu_{\theta}+\left(h_2h_{4}-h_1h_5\right)\nu_{3}\wedge\nu_{\theta}\\
&&+\left(h_1h_4+h_2h_5\right)\nu_{3}\wedge\nu_{\bar{\theta}}+\sum^{5}_{i=3}dh_i\wedge\nu_i.
\end{eqnarray*}
The Hamiltonian vector field $\vv{H}$ can be written as
\[
\vv{H}=X_{\bar{\theta}}+h_3\pa_{\theta}+\left(h_1h_4+h_2h_5\right)\pa_{h_3}.
\]
\subsection{Ample geodesics in the Cartan group} 
Observe that $h_4$ and $h_5$ are \emph{first integrals} of the Hamiltonian system: $\vv{H}h_4=\vv{H}h_5=0$. Another important first integral of the system is the so-called \emph{energy integral}:
\[
E=\frac{h^2_3}{2}+h_1h_5-h_2h_4.
\]
It is not difficult to see that $\vv{H}E=0$. In the coordinates 
\[
h_1=\cos\theta,\quad h_2=\sin\theta, \quad c=h_3, \quad h_4=\alpha\sin\beta, \quad h_5=-\alpha\cos\beta, 
\]
on the fibres of the unit cotangent bundle, where $\rho=1$, the energy has the form
\[
E= \frac{c^2}{2} -\alpha \cos(\theta-\beta)\in [-\alpha,\infty).
\]

\begin{proposition}
\label{NonVanh3}
Let  $\gamma=\pi\circ\lambda:[0,T]\rightarrow \mathbb{R}^5$ be a normal unit-speed  geodesic in $\mathfrak{C}$. Then, 
\begin{enumerate}
\item If  $h_3$ is not identically equal to  zero, then $\gamma$ is equiregular at $t\in [0,T]$ if and only if $h_3(t)\neq0$. Moreover, $\gamma$ is ample and the geodesic growth vector is given by
\begin{equation}
\label{GrCartan}
\mathcal{G}_{\gamma}(t)=\left\{ 
  \begin{array}{l l}
    (2,3,4,5), & \quad \text{if $h_3(t)\neq 0$},\\
   (2,3,4,4,5), & \quad \text{if $h_3(t)= 0$}. 
  \end{array} \right.
\end{equation}
\item If $h_3\equiv 0$, the geodesic is not ample, and we are in the case of an abnormal geodesic. 
\end{enumerate}
\end{proposition}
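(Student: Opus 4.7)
The plan is to mimic the inductive computation of the geodesic flag carried out in the proof of Theorem~\ref{NonVanh}, replacing the Goursat commutation rules by the Cartan ones: $[X_1,X_2]=X_3$, $[X_1,X_3]=X_4$, $[X_2,X_3]=X_5$, with $X_4,X_5$ central. First I would record the geodesic equations, which one reads off from the explicit expression of $\vv{H}$ above: $\dot h_1=-h_2h_3$, $\dot h_2=h_1h_3$, $\dot h_3=h_1h_4+h_2h_5$, $\dot h_4=\dot h_5=0$. I would then fix a horizontal extension $\mathsf{T}=v_1X_1+v_2X_2$ of $\dot\gamma$ with $v_i\circ\gamma=h_i$, so that $\mathsf{T}^j(v_i)|_{\gamma(t)}$ agrees with the $j$-th time derivative of $h_i(t)$.

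Computing modulo $\mathscr{D}_{\gamma(t)}$ gives $\mathcal{L}_\mathsf{T}(X_1)\equiv -h_2X_3$ and $\mathcal{L}_\mathsf{T}(X_2)\equiv h_1X_3$, so $\mathscr{F}^2_\gamma(t)=\mathscr{D}_{\gamma(t)}\oplus\mathrm{span}\{X_3\}$. At the next step, $\mathcal{L}^2_\mathsf{T}(X_1)\equiv -h_2(h_1X_4+h_2X_5)$ and $\mathcal{L}^2_\mathsf{T}(X_2)\equiv h_1(h_1X_4+h_2X_5)$ modulo $\mathscr{F}^2_\gamma(t)$; since $h_1^2+h_2^2=1$, both are nonzero scalar multiples of the single vector $h_1X_4+h_2X_5$, so $\dim\mathscr{F}^3_\gamma(t)=4$. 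At the third step, the $X_4,X_5$-components of $\mathcal{L}^3_\mathsf{T}(X_1)$ and $\mathcal{L}^3_\mathsf{T}(X_2)$ carry an overall factor of $h_3$; after projecting out the direction $h_1X_4+h_2X_5$ already present in $\mathscr{F}^3_\gamma(t)$, they reduce to $h_2h_3$ and $-h_1h_3$ times the complementary direction $h_2X_4-h_1X_5$. Thus when $h_3(t)\neq 0$ we obtain $\mathscr{F}^4_\gamma(t)=T_{\gamma(t)}\mathbb{R}^5$ and growth vector $(2,3,4,5)$, whereas when $h_3(t)=0$ the flag does not grow at step~$4$.

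To close case~(1) I would push one step further at a zero $t_0$ of $h_3$: the surviving leading contributions to $\mathcal{L}^4_\mathsf{T}(X_k)|_{\gamma(t_0)}$ in the quotient $T_{\gamma(t_0)}\mathbb{R}^5/\mathscr{F}^3_\gamma(t_0)$ acquire an overall factor of $\dot h_3(t_0)=h_1(t_0)h_4+h_2(t_0)h_5$ in place of the vanished $h_3(t_0)$, with the same $h_2X_4-h_1X_5$ projection pattern. The essential point is that $\dot h_3(t_0)\neq 0$ under the standing hypothesis $h_3\not\equiv 0$: the reduced system $(\dot\theta,\dot h_3)=(h_3,\,h_1h_4+h_2h_5)$ is a pendulum equation $\ddot\theta=-\alpha\sin(\theta-\beta)$ as in Section~\ref{Engel}, and simultaneous vanishing of $h_3(t_0)$ and $\dot h_3(t_0)$ would place $\theta$ at an equilibrium with zero velocity, forcing $h_3\equiv 0$ by ODE uniqueness. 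Hence $\mathscr{F}^5_\gamma(t_0)=T_{\gamma(t_0)}\mathbb{R}^5$, the growth vector is $(2,3,4,4,5)$, the zeros of $h_3$ are isolated, the geodesic is ample, and equiregularity at $t$ amounts exactly to $h_3(t)\neq 0$.

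Case~(2), $h_3\equiv 0$, is handled by the same mechanism in reverse: both factors $h_3$ and $\dot h_3$ governing the growth from $\mathscr{F}^3_\gamma(t)$ onward vanish identically, so the flag stabilises at dimension $4<5$, the geodesic is not ample, and Proposition~\ref{RAg} yields that it is abnormal. The main bookkeeping hurdle I foresee is in tracking the $X_4,X_5$-contributions at levels~$3$ and~$4$, because the $X_3$-component of $\mathcal{L}^{j-1}_\mathsf{T}(X_k)$ feeds into the next level through $[\mathsf{T},X_3]=v_1X_4+v_2X_5\bmod\mathscr{D}$ and produces several cross-terms; however, the centrality of $X_4,X_5$ forces $[\mathsf{T},X_4]$ and $[\mathsf{T},X_5]$ into $\mathscr{D}$, so all the corresponding corrections are harmlessly absorbed into $\mathscr{F}^j_\gamma(t)$.
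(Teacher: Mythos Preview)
Your proposal is correct and follows essentially the same approach as the paper's own proof: both compute the flag $\mathscr{F}^i_\gamma(t)$ step by step, obtaining $h_1X_4+h_2X_5$ at level~3, the factor $h_3$ in front of $h_2X_4-h_1X_5$ at level~4, and then $\dot h_3=h_1h_4+h_2h_5$ at level~5 when $h_3(t_0)=0$, with the ODE-uniqueness argument (which you phrase via the pendulum equilibrium) ruling out simultaneous vanishing of $h_3$ and $\dot h_3$ unless $h_3\equiv 0$. The only cosmetic difference is that the paper states the implication ``$h_3(\bar t)=0$ and $h_1(\bar t)h_4+h_2(\bar t)h_5=0\Rightarrow h_3\equiv 0$'' directly from the geodesic equations rather than invoking the pendulum picture.
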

\begin{proof}
Let $\gamma:[0,T]\rightarrow \R^5$ be a normal geodesic and  $\mathsf{T}=v_1X_1+v_2X_2$ be an admissible extension of $\gamma$. Recall the geodesic equations for the Cartan Group:
\begin{equation}
\label{vereqc}
\dot{h}_1=-h_2h_3, \quad \dot{h}_2=h_1h_3, \quad \dot{h}_3=h_1h_4+h_2h_5,\quad \dot{h}_4=\dot{h}_5=0.
\end{equation}
\begin{enumerate} 
\item Let us compute the growth vector when $h_3$ is not identically equal to zero. For $\mathscr{F}_2$ we have
\begin{eqnarray*}
\mathcal{L}_{\mathsf{T}}(X_1)&=&-h_2X_3\ \mod \mathscr{F}^1,  \label{Lie-C1}\\
\mathcal{L}_{\mathsf{T}}(X_2)&=&h_1X_3\ \mod \mathscr{F}^1. \label{Lie-C2}
\end{eqnarray*}
Hence, we obtain $\mathscr{F}^2=\mathrm{span}\{X_1,X_2,X_3\}$ for all $t$.
Now, observe that
\begin{eqnarray*}
\mathcal{L}^2_{\mathsf{T}}(X_1)&=& -h_2\left(h_1X_4+h_2X_5\right) \mod \mathscr{F}^2\label{Lie-C3}, \\ 
\mathcal{L}^2_{\mathsf{T}}(X_2)&=& h_1\left(h_1X_4+h_2X_5\right) \mod \mathscr{F}^2\label{Lie-C4}.
\end{eqnarray*}
Therefore, $\mathscr{F}_3=\mathrm{span}\{X_1,X_2,X_3,h_1X_4+h_2X_5\}$, and $k_3=\mathrm{dim}\ \mathscr{F}^3=4$, for all $t$.

For the computation of $\mathscr{F}^4$, we have
\begin{eqnarray*}
\mathcal{L}^3_{\mathsf{T}}(X_1)&=& h_2h_3\left(h_2X_4-h_1X_5\right) \mod \mathscr{F}^3, \\ 
\mathcal{L}^3_{\mathsf{T}}(X_2)&=& -h_1h_3\left(h_2X_4-h_1X_5\right) \mod \mathscr{F}^3.
\end{eqnarray*}
Then, if $h_3(t)\neq 0$, we obtain
\[
\mathscr{F}^4=\mathrm{span}\{X_1,X_2,X_3,h_1X_4+h_2X_5, h_2X_4-h_1X_5\}
\]
and $k_4=\mathrm{dim}\ \mathscr{F}^4=5$, since $h^2_1+h^2_2=1$.

If $h_3(\bar{t})=0$, then $k_4(\bar{t})=4$. In this case, we need to compute $\mathscr{F}^5$. Notice that 
\begin{eqnarray*}
\mathcal{L}^4_{\mathsf{T}}(X_1)&=&h_2\left(h_1h_4+h_2h_5\right)\left(h_2X_4-h_1X_5\right)\label{Lie-C31}\mod\mathscr{F}^4,\\
\mathcal{L}^4_{\mathsf{T}}(X_1)&=&-h_1\left(h_1h_4+h_2h_5\right)\left(h_2X_4-h_1X_5\right) \mod\mathscr{F}^4.\label{Lie-C41}
\end{eqnarray*}
 
 From the geodesic equations (\ref{vereqc}), if $h_1(\bar{t})h_4(\bar{t})+h_2(\bar{t})h_5(\bar{t})=0$, then $h_3\equiv 0$, which contradicts the hypothesis. Hence, we have that the geodesic growth vector for a geodesic with $h_3$ not identically equal to zero  is given by (\ref{GrCartan}).
 
  \item Now, if $h_3\equiv 0$, then $h_1(t)$ and $h_2(t)$ are constant. Hence, from the computations of case (1), it follows that in this case the geodesic is not ample, and hence the geodesic is abnormal.

\end{enumerate}
\end{proof}

The family of normal extremal trajectories can be parametrized by points in the cylinder
\begin{eqnarray*}
C&=&T^{\ast}_{x_0}\mathbb{R}^5\cap \{H=\frac{1}{2}\} =\{(h_1,h_2,h_3,h_4,h_5)\in \R^5 : h^2_1+h^2_2=1\}\\
&=& \{(\theta,c,\alpha,\beta) : \theta,\beta \in S^1, c,\alpha \in \R\}.
\end{eqnarray*}
Following \cite{Sac1}, we partition $C$ into subsets corresponding to different types of pendulum trajectories:
\[
C= \bigcup^{7}_{i=1} C_i,\quad C_i\cap C_j =\emptyset,\  i\neq j,\quad \lambda=(\theta,c,\alpha,\beta),
\]
\begin{eqnarray*}
C_1 &=&\{\lambda \in C: \alpha\neq 0, E\in (-\alpha,\alpha)\}, \\
C_2 &=&\{\lambda \in C:\alpha\neq 0,E\in(\alpha,+\infty)\}, \\
C_3 &=&\{\lambda \in C:\alpha\neq0,E=\alpha,\theta-\beta\neq\pi\}, \\
C_4 &=&\{\lambda \in C:\alpha\neq0,E=-\alpha\},\\
C_5 &=&\{\lambda \in C:\alpha\neq0,E=\alpha,\theta-\beta=\pi\}, \\
C_6 &=&\{\lambda \in C:\alpha=0,c\neq 0\},\\
C_7 &=&\{\lambda \in C:\alpha=c=0\}.
\end{eqnarray*}

 From Proposition \ref{NonVanh3} we have that ample geodesics belong to $C_1,C_2,C_3$, and $C_6$.  We now analyze the equiregularity of normal geodesics with initial values in  these subsets of the cylinder $C$. We will use extensively the results in \cite{Sac1}.
 
 Following  \cite{Sac1}, we introduce elliptic coordinates $(k,\varphi,\alpha,\beta)$ in the subsets $C_1,C_2$ and $C_3$ of the cylinder $C$  as following.
 
For  $\lambda\in C_1$, we set:
 \begin{gather*}
 \alpha\neq 0, \quad E\in (-\alpha,\alpha),\\
 k=\sqrt{\frac{E+\alpha}{2\alpha}}=\sqrt{\sin^2\frac{\theta-\beta}{2}+\frac{c^2}{4\alpha}}\in (0,1),\\
 \varphi \in [0,4K],\\
 \begin{cases}
 \sin\frac{\theta-\beta}{2}=k\ \mathrm{sn}(\sqrt{\alpha}\varphi);\\
 \frac{c}{2}=k\sqrt{\alpha}\ \mathrm{cn}(\sqrt{\alpha}\varphi);
 \end{cases}
 \end{gather*}
 
 For $\lambda\in C_2$, we set:
 \begin{gather*}
 \alpha\neq 0, \quad E\in (\alpha,\infty),\\
 k=\sqrt{\frac{2\alpha}{E+\alpha}}=\frac{1}{\sqrt{\sin^2\left((\theta-\beta)/2\right)+c^2/(4\alpha)}}\in (0,1),\\
 \varphi \in [0,2kK],\\
 \begin{cases}
 \sin\frac{\theta-\beta}{2}=\pm k\ \mathrm{sn}\frac{\sqrt{\alpha}\varphi}{k};\\
 \frac{c}{2}=\pm \frac{\sqrt{\alpha}}{k}\ \mathrm{dn}\frac{\sqrt{\alpha}\varphi}{k};\\
 \pm=\mathrm{sgn}\ {c};
 \end{cases}
 \end{gather*}
Here $4K$ is the period of the Jacobian elliptic functions sn and cn. 

For $\lambda\in C_3$, we set
 \begin{gather*}
 \alpha\neq 0, \quad E=\alpha,\quad \theta-\beta\neq\pi,\\
 k=1,\\
 \varphi\in(-\infty,\infty),\\
 \begin{cases}
 \sin\frac{\theta-\beta}{2}=\pm \tanh(\sqrt{\alpha}\varphi);\\
 \frac{c}{2}=\pm \frac{\sqrt{\alpha}}{\cosh(\sqrt{\alpha}\varphi)};\\
 \pm=\mathrm{sgn}\ {c}.
 \end{cases}
 \end{gather*}

In the elliptic coordinates $(k,\varphi, \alpha, \beta)$ on  $\cup^3_{i=1} C_i$ the vertical part of the normal
Hamiltonian system
\[
\dot{\theta} = c,\quad \dot{c} = - \alpha \sin(\theta - \beta), \quad\dot{\alpha} = \dot{\beta} = 0,
\]
simplifies to
\[
\dot{\varphi}=1, \quad \dot{k} = \dot{\alpha} =\dot{\beta} = 0 .
\]
The elliptic coordinate $\varphi$ is the time of movement along trajectories of the pendulum equation and $k$ is a parameter that distinguishes trajectories with different energies.

\begin{proposition}
For initial covectors $\lambda$ in  $C_1$, there exists an infinite set of times of loss of equiregularity. For $\lambda\in C_2,C_3$ and $C_6$, the geodesic is equiregular for all times. Moreover, for   $\lambda\in C_4, C_5, C_7$, the corresponding geodesic is not ample, and thus abnormal.
\end{proposition}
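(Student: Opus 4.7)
The plan is to reduce everything to the characterization in Proposition \ref{NonVanh3}: an ample geodesic $\gamma=\pi\circ\lambda$ is equiregular at $t$ if and only if $h_3(\lambda(t))\neq 0$, and $h_3\equiv 0$ forces the geodesic to be abnormal. Hence the whole statement becomes a case-by-case analysis of the zero set of $h_3(t)=c(t)$ along the integral curves of the vertical subsystem
\[
\dot\theta=c,\quad \dot c=-\alpha\sin(\theta-\beta),\quad \dot\alpha=\dot\beta=0,
\]
on each piece $C_i$ of the cylinder. I will use the elliptic coordinates $(\varphi,k,\alpha,\beta)$ of \cite{Sac1} recalled in the excerpt, in which this subsystem becomes $\dot\varphi=1$ and $\dot k=\dot\alpha=\dot\beta=0$, so that $c(t)$ is a simple explicit function of $\varphi_t=\varphi+t$.

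For $\lambda\in C_1$, the elliptic parametrisation gives $c(t)=2k\sqrt{\alpha}\,\mathrm{cn}(\sqrt{\alpha}(\varphi+t))$. Since $\mathrm{cn}$ is periodic with period $4K$ and has a discrete, infinite zero set (one zero per half-period at $K\pmod{2K}$), the times $t$ where $h_3(\lambda(t))=0$ form an infinite discrete set; by Proposition \ref{NonVanh3} these are precisely the times of loss of equiregularity. For $\lambda\in C_2$, one finds $c(t)=\pm\frac{2\sqrt{\alpha}}{k}\mathrm{dn}\!\left(\frac{\sqrt{\alpha}(\varphi+t)}{k}\right)$, and since $\mathrm{dn}\geq \sqrt{1-k^2}>0$ for $k\in(0,1)$, we have $h_3(t)\neq 0$ for all $t$, so the geodesic is equiregular everywhere. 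For $\lambda\in C_3$, $c(t)=\pm \frac{2\sqrt{\alpha}}{\cosh(\sqrt{\alpha}(\varphi+t))}$ never vanishes, giving again equiregularity for all $t$. For $\lambda\in C_6$, we have $\alpha=0$, so $h_4=h_5=0$ identically; the geodesic equation $\dot h_3=h_1h_4+h_2h_5$ then gives $h_3(t)\equiv c\neq 0$, so $\gamma$ is equiregular for all times.

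It remains to treat the non-ample subsets $C_4,C_5,C_7$, where the goal is to show $h_3\equiv 0$ and then appeal to Proposition \ref{NonVanh3}. If $\lambda\in C_4$, then $E=-\alpha$ forces $c^2=2\alpha(\cos(\theta-\beta)-1)\leq 0$ (for $\alpha>0$; the case $\alpha<0$ is analogous), so $c(0)=0$ and $\theta(0)-\beta\in 2\pi\mathbb Z$; the vertical ODE then has $\dot c(0)=-\alpha\sin(\theta-\beta)=0$, and by uniqueness $c(t)\equiv 0$, i.e.\ $h_3\equiv 0$. If $\lambda\in C_5$, the condition $E=\alpha$ together with $\theta-\beta=\pi$ puts the pendulum at the unstable equilibrium with $c=0$, which is again a fixed point of the vertical system, so $h_3\equiv 0$. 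If $\lambda\in C_7$, then $\alpha=0$ yields $h_4=h_5=0$ so $\dot h_3=0$; combined with $c=h_3(0)=0$ this gives $h_3\equiv 0$.

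The main conceptual point, and the only mildly subtle one, is the equivalence between loss of equiregularity and vanishing of $h_3$ in $\mathfrak C$; that has already been done in Proposition \ref{NonVanh3}. Once this is granted, the proof is an entirely mechanical case analysis using the explicit elliptic parametrisation and the standard zero/positivity properties of the Jacobi functions $\mathrm{cn}$, $\mathrm{dn}$, and $\mathrm{sech}$. The only point requiring a brief justification is that the equilibria associated with $C_4$ and $C_5$ are genuinely fixed by the flow (uniqueness of ODE solutions), so $h_3$ cannot become non-zero along them.
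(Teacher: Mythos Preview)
Your proof is correct and follows essentially the same route as the paper: reduce via Proposition~\ref{NonVanh3} to the vanishing of $h_3=c$, then read off the zero set of $c(t)$ from the explicit elliptic parametrisation on each $C_i$. Your treatment is slightly more detailed for $C_4,C_5$ (invoking uniqueness at the pendulum equilibria) and you supply the reason $\mathrm{dn}>0$, but the argument is otherwise identical; note only that in the Cartan setup $\alpha\geq 0$ by definition, so the aside about $\alpha<0$ is unnecessary.
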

\begin{proof}

\begin{enumerate}
\item Let $\lambda\in C_1$. Then, $c^2=4k^2\alpha\ \mathrm{cn}^2 \left(\sqrt{\alpha}\varphi_t\right)$, with $\varphi_t=\varphi+t$ and $k=\sqrt{\frac{E+1}{2}}$. In this case the geodesic has a infinite and discrete set of times of loss of equiregularity.

\item Let $\lambda\in C_2$. Then, we have $c^2=\frac{4\alpha}{k^2}\mathrm{dn}^2\ (\sqrt{\alpha}\psi(t))$,  with $\psi(t)=\frac{\varphi+t}{k}$ and $k=\sqrt{\frac{2}{E+1}}$. Hence, the geodesic is equiregular.

\item Let $\lambda\in C_3$. Then, $c^2=\frac{4\alpha}{\cosh^2(\sqrt{\alpha}\varphi_t)}$, with $\varphi_t=\varphi+t$. In this case the geodesic is also equiregular.

\item For $\lambda\in C_6$ we have $\alpha=0$, and  $c=\mathrm{const}\neq 0$. Therefore, the geodesic $\gamma$ is equiregular. 

\item For $\lambda\in C_4,C_5,C_7$, we have that $c\equiv 0$. Hence, from Proposition \ref{NonVanh3}, the corresponding geodesic is not ample, hence abnormal.
\end{enumerate}
\end{proof}

\subsection{The canonical frame}
Let $\ld$ be the initial covector associated to an ample, equiregular, unit-speed geodesic  $\gamma:\left[0,T\right]\rightarrow \mathbb{R}^5$, with $\gamma(0)=x_0$. The associated Young diagram is given by
\[
D=\begin{tabular}{ | l | l | l | l |}
  \hline
$a1$   & $a2$ & $a3$ & $a4$ \\ \hline
$b1$  & \multicolumn{1}{l}{} & \multicolumn{1}{l}{} & \multicolumn{1}{l}{} \\   \cline{1-1}
\end{tabular}
\]
 For such Young diagram, a canonical frame is a smooth family
\[
\{E_{a1},\ldots,E_{a4},E_{b1},F_{a1},\ldots,F_{a4},F_{b1}\}\in T_{\lambda_0}\left(T^*\mathbb{R}^n\right),
\]
with the following properties:
\begin{enumerate}
\item It is attached to the Jacobi curve, namely 
\[
J_{\lambda_0}(t)=\text{span}\{E_{a1}(t),E_{a2}(t),E_{a3}(t),E_{a4}(t),E_{b1}(t)\}.
\]

\item From  \cite[Lemma 5.7]{BR2} we have:
\begin{equation*}
E_{b1}(t)=e^{-t\vv{H}}_*\mathfrak{e}=\mathfrak{e}-t\vv{H},
\end{equation*}
as a consequence all curvatures $R_{\ast b,\ast 1}$, (where $\ast$ is any other index) vanish.
\item The family of symmetric matrices $R(t)$ is \emph{normal} in the sense of \cite{ZL}. In this ``easy" Young diagram case, the \emph{normal} condition means that the matrix $[R_{aa,ij}]_{i,j=1,\ldots,n_a}$ is diagonal.

\item They satisfy the structural equations:
\begin{align*}
\dot{E}_{ai}&= E_{a(i-1)}(t)  &i&=2,3,4,\\
\dot{E}_{a1}&=- F_{a1}(t) & &\\
\dot{E}_{b1}&=- F_{b1}(t) & &\\
\dot{F}_{ai}&=R_{aa,ii}(t)E_{ai}(t)-F_{a(i+1)}(t), & i&=1,2,3,\\
\dot{F}_{an_a}&=R_{aa,n_an_a}(t)E_{an_a}(t), & &\\
\dot{F}_{b1}&=0.& &
\end{align*}
\end{enumerate}
The proof of the next lemma follows the lines of the proof of Lemma \ref{IdHh}. It is, therefore, left to the interested reader.
\begin{lemma}
\label{IdHhC}
The following identities hold true:
\begin{align}
[\vv{H},X_{\theta}]&=-X_3+h_3X_{\bar{\theta}}, \label{CHX}\\
\left[\vv{H},\pa_{h_5}\right]&=-h_2\pa_{h_3}, \label{Ch5}\\
\left[\vv{H},\pa_{h_4}\right]&=-h_1\pa_{h_3}, \label{Ch4} \\
\left[\vv{H},\pa_{h_3}\right]&=-\pa_{\theta} ,\label{Ch3}\\
\left[\vv{H},\pa_{\theta}\right]&=X_{\theta}+\left(h_2h_4-h_1h_5\right)\pa_{h_3}. \label{Cht}
\end{align}
\end{lemma}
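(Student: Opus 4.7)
The plan is to verify each of the five identities by direct bracket computations in the canonical coordinates $(x,y,z,v,w,h_1,\ldots,h_5)$ on $T^*\R^5$, starting from the explicit expression
\[
\vv{H}=X_{\bar{\theta}}+h_3\pa_{\theta}+(h_1h_4+h_2h_5)\pa_{h_3}
\]
derived in Section~\ref{Sec:Cartan}. Throughout, I will exploit three structural facts that make every bracket calculation essentially mechanical: the horizontal lifts $X_1,\ldots,X_5$ have coefficients depending only on the base coordinates, hence they commute with every vertical vector field $\pa_{h_i}$ and with $\pa_{\theta}$; the fiber coordinates $h_i$ are annihilated by every $X_j$, so $[X_{\bar{\theta}},X_i]=h_1[X_1,X_i]+h_2[X_2,X_i]$ is computable directly from the Lie algebra relations of $\mathfrak{C}$; and the action of $\vv{H}$ on the fiber coordinates reproduces the geodesic equations (\ref{vereqc}), in particular $\vv{H}(h_1)=-h_2h_3$ and $\vv{H}(h_2)=h_1h_3$.

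I will handle the three vertical identities (\ref{Ch5}), (\ref{Ch4}), (\ref{Ch3}) first, since they are the shortest. In each case the $X_{\bar{\theta}}$ summand of $\vv{H}$ contributes nothing, as its scalar coefficients $h_1,h_2$ are independent of $h_3,h_4,h_5$; the brackets among $\pa_{\theta},\pa_{h_3},\pa_{h_4},\pa_{h_5}$ all vanish; and what remains is a single Leibniz expansion. For (\ref{Ch5}) this yields $-\pa_{h_5}(h_1h_4+h_2h_5)\pa_{h_3}=-h_2\pa_{h_3}$, for (\ref{Ch4}) symmetrically $-h_1\pa_{h_3}$, and for (\ref{Ch3}) we get $-\pa_{h_3}(h_3)\pa_{\theta}=-\pa_{\theta}$.

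Identity (\ref{Cht}) requires a little more care. The only nontrivial summand of $[\vv{H},\pa_{\theta}]$ is $[X_{\bar{\theta}},\pa_{\theta}]$: writing $\pa_{\theta}=h_1\pa_{h_2}-h_2\pa_{h_1}$ and applying Leibniz, the brackets $[\pa_{\theta},X_i]$ vanish by the base/fiber separation, so what survives is $-\pa_{\theta}(h_1)X_1-\pa_{\theta}(h_2)X_2=h_2X_1-h_1X_2=X_{\theta}$. The other nonzero piece comes from $\pa_{\theta}$ acting on the coefficient $h_1h_4+h_2h_5$, which produces $(h_1h_5-h_2h_4)$ with an overall sign, giving the summand $(h_2h_4-h_1h_5)\pa_{h_3}$.

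The main step is identity (\ref{CHX}), and it is the one I expect to write out most carefully. I will expand $X_{\theta}=h_2X_1-h_1X_2$ and apply Leibniz to obtain
\[
[\vv{H},X_{\theta}]=\vv{H}(h_2)X_1-\vv{H}(h_1)X_2+h_2[\vv{H},X_1]-h_1[\vv{H},X_2].
\]
The first two terms give $h_1h_3X_1+h_2h_3X_2=h_3X_{\bar{\theta}}$ via the geodesic equations. For the remaining two, the vertical pieces of $\vv{H}$ commute with $X_1,X_2$, so $[\vv{H},X_i]=[X_{\bar{\theta}},X_i]$; the structure relations of $\mathfrak{C}$ then yield $[X_{\bar{\theta}},X_1]=-h_2X_3$ and $[X_{\bar{\theta}},X_2]=h_1X_3$. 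Assembling everything produces $h_3X_{\bar{\theta}}-(h_1^2+h_2^2)X_3$, which reduces to the claimed $-X_3+h_3X_{\bar{\theta}}$ on the unit cotangent bundle $\{H=1/2\}$ where the identity is applied. No genuine obstacle arises; the only real care is bookkeeping which coefficients depend on base versus fiber coordinates.
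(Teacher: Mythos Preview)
Your proof is correct and follows exactly the approach the paper indicates (the paper omits the proof, referring back to the analogous computation in Lemma~\ref{IdHh}, which is the same direct bracket expansion you carry out). Your observation about identity (\ref{CHX}) is also accurate: the bracket yields $-(h_1^2+h_2^2)X_3+h_3X_{\bar\theta}$, so the stated form $-X_3+h_3X_{\bar\theta}$ holds on the level set $\{H=\tfrac12\}$, which is precisely where the lemma is used in the subsequent canonical-frame computation for unit-speed geodesics.
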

We  can start now the computation of the \emph{canonical frame}.
\begin{lemma}
$E_{a4}(t)$ is uniquely specified (up to a sign) by the following conditions
\begin{enumerate}
\item $E_{a4}(t)\in J_{\lambda_0}(t)$,
\item $E^{(i)}_{an_a}(t)\in J_{\lambda_0}(t)$, for $i=1,\ldots,3$,
\item $\sigma_{\lambda}\left(E^{(4)}_{a4}(t),E^{(3)}_{a4}(t)\right)=1$,
\end{enumerate}
and, by choosing the positive sign, is given by
\[
E_{an_a}(t)=e^{-t\vv{H}}_{\ast}\left(\frac{h_2}{h_3}\pa_{h_4}-\frac{h_1}{h_3}\pa_{h_5}\right).
\]
\end{lemma}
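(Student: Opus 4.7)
The plan is to mirror the proof of Lemma \ref{ForE}, with Lemma \ref{IdHhC} playing the role of Lemma \ref{IdHh}. By Condition (1) and the identity $J_{\lambda_0}(t) = e^{-t\vv{H}}_{\ast}\mathcal{V}_{\lambda(t)}$, we write
\[
E_{a4}(t) = e^{-t\vv{H}}_{\ast}\left(\sum_{i=1}^{5} f_i(t)\,\partial_{h_i}\right)
\]
for unknown smooth scalar coefficients $f_i(t)$. Since $\frac{d}{dt}\bigl(e^{-t\vv{H}}_{\ast} Y_t\bigr) = e^{-t\vv{H}}_{\ast}\bigl(\partial_t Y_t + [\vv{H}, Y_t]\bigr)$, higher derivatives are computed by iterating $\partial_t + \mathrm{ad}_{\vv{H}}$ and invoking the commutator identities of Lemma \ref{IdHhC}.

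Condition (2) is then applied at the first, second, and third derivative. At the first derivative, only $f_1[\vv{H},\partial_{h_1}]$ and $f_2[\vv{H},\partial_{h_2}]$ yield horizontal contributions, with projections $-f_1 X_1 - f_2 X_2$, forcing $f_1 \equiv f_2 \equiv 0$. After this reduction, the only horizontal contribution to $\ddot{E}_{a4}$ is $-f_3 X_\theta$, produced by the chain $f_3\partial_{h_3} \mapsto -f_3\partial_\theta \mapsto -f_3 X_\theta$ via Eqs.~(\ref{Ch3}) and (\ref{Cht}); since $X_\theta = h_2 X_1 - h_1 X_2$ does not vanish on the unit cylinder $\{H=1/2\}$, we conclude $f_3 \equiv 0$. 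One further iteration shows that the horizontal part of $E^{(3)}_{a4}$ reduces to $(f_4 h_1 + f_5 h_2)\,X_\theta$, so Condition (2) gives $f_4 h_1 + f_5 h_2 \equiv 0$; equivalently, $(f_4, f_5) = g(t)\,(h_2, -h_1)$ for a single smooth scalar $g(t)$.

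It remains to fix $g$ by Condition (3). Since $e^{-t\vv{H}}$ is a symplectomorphism, the symplectic product can be evaluated at $\lambda(t)$ on the vector fields before pushforward. With $W_1 \doteq h_2\partial_{h_4} - h_1\partial_{h_5}$ and $W_2 \doteq h_1\partial_{h_4} + h_2\partial_{h_5}$, Lemma \ref{IdHhC} together with the geodesic relations $\vv{H}(h_1) = -h_2 h_3$, $\vv{H}(h_2) = h_1 h_3$ yield $[\vv{H}, W_1] = h_3\, W_2$ and $[\vv{H}, W_2] = -h_3\, W_1 - \partial_{h_3}$ on the unit cylinder. Iterating $\partial_t + \mathrm{ad}_{\vv{H}}$ three and four times on $g\, W_1$, one finds that $\tilde Y^{(3)}$ is vertical and contains the summand $g h_3\,\partial_\theta$, while the only horizontal component of $\tilde Y^{(4)}$ is $g h_3\, X_\theta$, generated by $[\vv{H}, g h_3\,\partial_\theta]$ via Eq.~(\ref{Cht}). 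Since $\sigma(X_\theta, \partial_\theta) = 1$ on the unit cylinder while $\sigma(X_\theta, W_1) = \sigma(X_\theta, W_2) = \sigma(X_\theta, \partial_{h_3}) = 0$, the symplectic pairing collapses to
\[
\sigma_{\lambda(t)}\bigl(\tilde Y^{(4)}, \tilde Y^{(3)}\bigr) = g^2 h_3^2.
\]
Condition (3) then forces $g^2 h_3^2 = 1$, and the positive sign gives $g = 1/h_3$, producing the announced formula. Note that $h_3 \neq 0$, required for this formula to make sense, matches exactly the equiregularity criterion of Proposition \ref{NonVanh3}.

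The main obstacle is this last symplectic computation: the iterated brackets generating $\tilde Y^{(3)}$ and $\tilde Y^{(4)}$ produce many terms, but the drastic simplification comes from the observation that $X_\theta$ is annihilated under $\sigma$ by $W_1, W_2, \partial_{h_3}$, so only the $g h_3\,\partial_\theta$ summand of $\tilde Y^{(3)}$ can contribute against the single horizontal summand $g h_3\, X_\theta$ of $\tilde Y^{(4)}$. This reduces an a priori involved calculation to the closed-form value $g^2 h_3^2$.
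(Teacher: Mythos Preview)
Your proof is correct and follows essentially the same algorithm as the paper's own proof: write $E_{a4}$ in vertical coordinates, use Condition~(2) successively to kill $f_1,f_2$, then $f_3$, then obtain $h_1f_4+h_2f_5=0$, and finally use Condition~(3) for the normalization. Your final step is organized a bit more cleanly than the paper's---parametrizing $(f_4,f_5)=g(h_2,-h_1)$, exploiting the bracket relations $[\vv{H},W_1]=h_3W_2$, $[\vv{H},W_2]=-h_3W_1-\partial_{h_3}$, and invoking Lagrangianity of the vertical to reduce the symplectic pairing to $g^2h_3^2$---whereas the paper keeps $f_4,f_5$ separate, computes $(h_1\dot f_4+h_2\dot f_5)^2=1$ directly, and then solves the resulting $2\times 2$ linear system; but this is a difference in bookkeeping rather than in strategy.
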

\begin{proof}
Condition (1) and the definition of Jacobi curve $J_{\lambda_0}(t)=e^{-t\vv{H}}_*\mathcal{V}_{\ld(t)}$ imply that
\[
E_{an_a}=e^{-t\vv{H}}_\ast\left(\sum^{5}_{i=1}f_{i}(t)\pa_{h_i}\right),
\]
for some smooth functions $f_i(t)$, with $i=1,\ldots,n$.  We compute the derivative:
\[
\dot{E}_{a4}(t)=e^{-t\vv{H}}_{\ast}\left(\sum^{5}_{i=1}f_{i}(t)[\vv{H},\pa_{h_i}]+\dot{f}_{i}(t)\pa_{h_i}\right).
\]
Condition (2) is implies  $\pi_{\ast}\circ e^{t\vv{H}}_{\ast}\dot{E}_{a4}(t)=0$. Since $\pi_{\ast}\pa_{h_i}=0$, we obtain from Lemma \ref{IdHhC},
\begin{align*}
0&=\pi_{\ast}\sum^{5}_{i=1}f_{i}(t)[\vv{H},\pa_{h_i}]\\
&=-f_{1}(t)X_1-f_2(t)X_2.
\end{align*}
From this we obtain $f_{1}=f_2\equiv0$. Then $E_{a4}(t)$ must be of the form 
\[
E_{a4}(t)=e^{-t\vv{H}}_{\ast}\left(f_3(t)\pa_{h_3}+f_4(t)\pa_{h_4}+f_5(t)\pa_{h_5}\right).
\] 
After some computations we obtain
\begin{align*}
e^{t\vv{H}}_{\ast}\dot{E}_{a4}&=-f_3\pa_{\theta}+\left(\dot{f}_3-h_1f_4-h_2f_5\right)\pa_{h_3}+\dot{f}_{4}\pa_{h_4}+\dot{f}_5\pa_{h_5},\\
e^{t\vv{H}}_{\ast}\ddot{E}_{a4}&=-f_3X_{\theta}-\left(2\dot{f}_3-h_1f_4-h_2f_5\right)\pa_{\theta}\\
&+\left(\vv{H}\left(\dot{f}_3-h_1f_4-h_2f_5\right)-f_3\left(h_2h_4-h_1h_5\right)-h_1\dot{f}_4-h_2\dot{f}_5\right)\pa_{h_3}+\ddot{f}_4\pa_{h_4}+\ddot{f}_5\pa_{h_5}.
\end{align*}
Condition (2) is equivalent to $\pi_{\ast}\circ e^{t\vv{H}}_{\ast}\ddot{E}_{a4}(t)=0$. Since $\pi_{\ast}\pa_{h_i}=0$, we obtain once again from Lemma \ref{IdHhC}
\begin{align*}
0&=\pi_{\ast}\circ e^{t\vv{H}}_{\ast}\ddot{E}_{a4}(t)=-f_3\left(h_2X_1-h_1X_2\right).
\end{align*}
This gives $f_{3}(t)\equiv 0$. Therefore, $E_{a4}(t)$ must be of the form 
\[
E_{a4}(t)=e^{-t\vv{H}}_{\ast}\left(f_4(t)\pa_{h_4}+f_5(t)\pa_{h_5}\right).
\] 
Finally,
\begin{align*}
e^{t\vv{H}}_{\ast}\dddot{E}_{a4}&=\left(h_1f_4+h_2f_5\right)X_{\theta}+\left(2\vv{H}\left(h_1f_4+h_2f_5\right)+h_1\dot{f}_4+h_2\dot{f}_5\right)\pa_{\theta}\\
&-\left(\left(h_1f_4+h_2f_5\right)\left(h_1h_5-h_2h_4\right)+\vv{H}^2\left(h_1f_4+h_2f_5\right)+\vv{H}\left(h_1\dot{f}_4+h_2\dot{f}_5\right)+h_1\ddot{f}_4+h_2\ddot{f}_5\right)\pa_{h_3}\\
&+\dddot{f}_4\pa_{h_4}+\dddot{f}_5\pa_{h_5}.
\end{align*}
Since $\pi_*\circ e^{t\vv{H}}_{\ast}\dddot{E}_{a4}=0$, we must have
\begin{equation}
\label{fg0}
h_1f_4+h_2f_5=0.
\end{equation}
Therefore,
\begin{eqnarray*}
e^{t\vv{H}}_{\ast}\dddot{E}_{a4}&=&\left(h_1\dot{f}_{4}+h_2\dot{f}_5\right)\pa_{\theta}-\left(\vv{H}\left(h_1\dot{f}_4+h_2\dot{f}_5\right)+h_1\ddot{f}_4+h_2\ddot{f}_5\right)\pa_{h_3}\\
&&+\dddot{f}_{4}\pa_{h_4}+\dddot{f}_5\pa_{h_5}.
\end{eqnarray*}
and
\begin{eqnarray*}
e^{t\vv{H}}_{\ast}E^{(4)}_{a4}&=&\left(h_1\dot{f}_4+h_2\dot{f}_5\right)X_{\theta}+\left(2\vv{H}\left(h_1\dot{f}_4+h_2\dot{f}_5\right)+h_1\ddot{f}_4+h_2\ddot{f}_5\right)\pa_{\theta}\\
&&-\left(\vv{H}^2\left(h_1\dot{f}_4+h_2\dot{f}_5\right)+\vv{H}\left(h_1\ddot{f}_4+h_2\ddot{f}_5\right)+h_1\dddot{f}_4+h_2\dddot{f}_5\right)\pa_{h_3}\\
&&+\left(h_1\dot{f}_4+h_2\dot{f}_5\right)\left(h_2h_4-h_1h_5\right)\pa_{h_3}+f^{(4)}_{4}\pa_{h_4}+f^{(4)}_5\pa_{h_5}.
\end{eqnarray*}
Then, 
\begin{eqnarray*}
1=\sigma_{\lambda}\left(E^{(4)}_{a4}(t),\dddot{E}_{a4}(t)\right)&=&\sigma_{\lambda(t)}\left(e^{t\vv{H}}_{\ast}E^{(4)}_{a4}(t),e^{t\vv{H}}_{\ast}\dddot{E}_{a4}(t)\right)\\
&=&\left(h_1\dot{f}_4(t)+h_2\dot{f}_5(t)\right)^2.
\end{eqnarray*}
By choosing the positive sign, using equation (\ref{fg0}), and the geodesic equations, we have that
\begin{equation}
\label{fg1}
1=h_1\dot{f}_{4}+h_2\dot{f}_5=h_3\left(h_2f_4-h_1f_5\right).
\end{equation}
The solution to the system of equations given by (\ref{fg0}) and (\ref{fg1}) is 
\[
f_4=\frac{h_2}{h_3},\quad f_5=-\frac{h_1}{h_3}.
\]
\end{proof}

If we take the derivative of Eq. (\ref{fg1}), we obtain
\begin{eqnarray*}
h_1\ddot{f}_4+h_2\ddot{f}_5&=&-\dot{h}_1\dot{f}_4-\dot{h}_2\dot{f}_5\\
&=& h_3\left(h_2\dot{f}_4-h_1\dot{f}_5\right).
\end{eqnarray*}
A straightforward computation gives us
 \begin{eqnarray*}
\dot{f}_4&=&h_1-\frac{h_2}{h^2_3}\left(h_1h_4+h_2h_5\right),\\
\dot{f}_5&=&h_2+\frac{h_1}{h^2_3}\left(h_1h_4+h_2h_5\right).
\end{eqnarray*}
Therefore
\[
h_1\ddot{f}_4+h_2\ddot{f}_5=-\frac{h_1h_4+h_2h_5}{h_3}.
\]
Furthermore, we have
\[
h_1\dddot{f}_4+h_2\dddot{f}_5=\vv{H}\left(h_1\ddot{f}_4+h_2\ddot{f}_5\right)-\dot{h}_1\ddot{f}_4-\dot{h}_2\ddot{f}_5.
\]
After some computations, we have
\begin{eqnarray*}
\ddot{f}_4&=&-h_2h_3-\frac{1}{h^3_3}\left(h^2_3\left(2h_1h_2h_5+h_4\left(h^2_1-h^2_2\right)\right)-2h_2\left(h_1h_4+h_2h_5\right)^2\right),\\
\ddot{f}_5&=&h_1h_3+\frac{1}{h^3_3}\left(h^2_3\left(-2h_1h_2h_4+h_5\left(h^2_1-h^2_2\right)\right)-2h_1\left(h_1h_4+h_2h_5\right)^2\right),
\end{eqnarray*}
and 
\[
\vv{H}\left(h_1\ddot{f}_4+h_2\ddot{f}_5\right)=h_2h_4-h_1h_5+\frac{\left(h_1h_4+h_2h_5\right)^2}{h^2_3}.
\]
Hence,
\begin{equation}
\dot{h}_1\ddot{f}_4+\dot{h}_2\ddot{f}_5=\left(h^2_3+\left(h_1h_5-h_2h_4\right)-\frac{2}{h^2_3}\left(h_1h_4+h_2h_5\right)^2\right).
\end{equation}
Therefore,
\[
h_1\dddot{f}_4+h_2\dddot{f}_5=-\left(h^2_3+2\left(h_1h_5-h_2h_4\right)-\frac{3}{h^2_3}\left(h_1h_4+h_2h_5\right)^2\right).
\]
Finally, we obtain the following expressions for $F_{a1}$ and $\dot{F}_{a1}$:
\begin{eqnarray}
\nonumber
F_{a1}(0)&=&-\left(h_1\dot{f}_4+h_2\dot{f}_5\right)X_{\theta}-\left(h_1\ddot{f}_4+h_2\ddot{f}_5\right)\pa_{\theta}+\left(\vv{H}\left(h_1\ddot{f}_4+h_2\ddot{f}_5\right)+h_1\dddot{f}_4+h_2\dddot{f}_5\right)\pa_{h_3}\\ \nonumber
&&-\left(h_1\dot{f}_4+h_2\dot{f}_5\right)\left(h_2h_4-h_1h_5\right)\pa_{h_3}-f^{(4)}_{4}\pa_{h_4}-f^{(4)}_5\pa_{h_5}\\ \nonumber
&=&-X_{\theta}+\frac{h_1h_4+h_2h_5}{h_3}\partial_{\theta}-\left(h^2_3+2\left(h_1h_5-h_2h_4\right)-\frac{4}{h^2_3}\left(h_1h_4+h_2h_5\right)^2\right)\partial_{h_3}\\ \label{CFa1}
&&\mod\mathcal{V}_{4},
\end{eqnarray}
and
\begin{eqnarray}
\nonumber
\dot{F}_{a1}(0)&=&X_3-h_3X_{\bar{\theta}}+\frac{h_1h_4+h_2h_5}{h_3}X_{\theta}+\left(h^2_3+3\left(h_1h_5-h_2h_4\right)-\frac{5}{h^2_3}\left(h_1h_4+h_2h_5\right)^2\right)\partial_{\theta}\\ \label{CdFa1}
&& \mod \mathcal{V}_{3},
\end{eqnarray}
where $\mathcal{V}_{3}=\mathrm{span}\ \{\partial_{h_3},\partial_{h_4},\partial_{h_5}\}$ and $\mathcal{V}_{4}=\mathrm{span}\ \{\partial_{h_4},\partial_{h_5}\}$.
\begin{proof} [Proof of Theorem \ref{casym}] If we use formulas (\ref{CFa1}) and (\ref{CdFa1}), we obtain
\begin{equation*}
R_{aa,11}=\sigma_{\lambda_0}\left(\dot{F}_{a1}(0),F_{a1}(0)\right)=3h^2_3+6\left(h_1h_5-h_2h_4\right)-\frac{8}{h^2_3}\left(h_1h_4+h_2h_5\right)^2.
\end{equation*}
In terms of the \emph{first integral}
\[
E=\frac{h^2_3}{2}+h_1h_5-h_2h_4,
\]
and the coordinates $\left(\theta,c,\alpha,\beta\right)$ introduced in Section 6.1, we can write
\begin{eqnarray*}
R_{aa,11}&=&6E-8\frac{\alpha^2}{c^2}\sin^2(\theta-\beta).\\
&\leq& 6E.
\end{eqnarray*}
By direct inspection, the orthonormal basis $\{X_a,X_b\}$ for $\mathscr{D}_{x_0}$ obtained by the projection of the canonical frame is
 \[
 X_a\doteq\pi_{\ast}F_{a1}(0), \quad  X_b\doteq\pi_{\ast}F_{b1}(0). 
 \]
In the coordinates associated to the splitting $\Sigma_{\lambda}=\mathcal{V}_{\lambda_0}\oplus\mathcal{H}_{\lambda_0}$ we have 
\[
\mathcal{Q}_{\lambda_0}(t)=\frac{d}{dt}S^{\flat}(t)^{-1},
\]
where $S^{\flat}(t)^{-1}$, in the basis $\{X_a,X_b\}$, is given by:
\[
S^{\flat}(t)^{-1}=-\frac{1}{t}
\begin{pmatrix}
16& 0\\
0& 1
\end{pmatrix}
+\frac{4}{63}
\begin{pmatrix}
R_{aa,11} & 0\\
0 & 0
\end{pmatrix}
t+O\left(t^2\right).
\]
Therefore, the curvature operator has the following expression
\begin{equation*}
\mathcal{R}_{\lambda_0}=
\begin{pmatrix}
\frac{4}{21}R_{aa,11}(0)&0\\
0&0
\end{pmatrix},
\end{equation*}
where $R_{aa,11}=6E-8\frac{\alpha^2}{c^2}\sin^2(\theta-\beta).$
 \end{proof}

\end{document}